\documentclass[12pt]{amsart}
\usepackage{amssymb}
\usepackage{mathtools}

\usepackage{tikz}
\usetikzlibrary{decorations.pathreplacing}
\usetikzlibrary{matrix}
\usetikzlibrary{calc}
 \usepackage{tikz-cd}
\tikzset{
  symbol/.style={
    draw=none,
    every to/.append style={
      edge node={node [sloped, allow upside down, auto=false]{$#1$}}}
  }
}
\usepackage[normalem]{ulem}  

\makeatletter
\newcommand*{\centerfloat}{%
  \parindent \z@
  \leftskip \z@ \@plus 1fil \@minus \textwidth
  \rightskip\leftskip
  \parfillskip \z@skip}
\makeatother

\usepackage[pdfencoding=auto,psdextra]{hyperref}
\usepackage{ytableau}
\usepackage[hmargin=1in,vmargin=1.25in]{geometry}
\usepackage{todonotes}
\allowdisplaybreaks

\definecolor{amethyst}{rgb}{0.6, 0.4, 0.8}
\definecolor{kellygreen}{rgb}{0.3, 0.73, 0.09}
\definecolor{americanrose}{rgb}{1.0, 0.01, 0.24}
\definecolor{teal}{rgb}{0, 0.5, 0.5}
\definecolor{lime}{rgb}{0.75, 1, 0}
\definecolor{darklime}{rgb}{0.25, .33, 0}

\makeatletter
\newcommand{\addresseshere}{%
  \enddoc@text\let\enddoc@text\relax
}
\makeatother

\newtheorem{thm}{Theorem}[subsection]
\newtheorem{lemma}[thm]{Lemma}
\newtheorem{prop}[thm]{Proposition}
\newtheorem{cor}[thm]{Corollary}
\newtheorem{conj}[thm]{Conjecture}

\theoremstyle{definition}
\newtheorem{defn}[thm]{Definition}
\newtheorem{open}[thm]{Open problem}

\theoremstyle{remark}

\newtheorem{remark}[thm]{Remark}

\newcommand{\PP}{{\mathsf{P}}}   
\newcommand{\Q}{{\mathsf{Q}}}   

\DeclareMathOperator{\area}{area}
\DeclareMathOperator{\Area}{Area}
\DeclareMathOperator{\content}{content}

\DeclareMathOperator{\FWPF}{FWPF}
\newcommand{\umnab}{\bar{\nabla}}   
\newcommand{\modnab}{\boldsymbol{\nabla}}
\newcommand{\umthe}{\bar{\Theta}}   
\newcommand{\modthe}{\boldsymbol{\Theta}}
\DeclareFontFamily{U}{mathx}{\hyphenchar\font45}
\DeclareFontShape{U}{mathx}{m}{n}{
      <5> <6> <7> <8> <9> <10>
      <10.95> <12> <14.4> <17.28> <20.74> <24.88>
      mathx10
      }{}
\DeclareSymbolFont{mathx}{U}{mathx}{m}{n}
\DeclareFontSubstitution{U}{mathx}{m}{n}
\DeclareMathAccent{\widecheck}{0}{mathx}{"71}
\newcommand{\Mnab}{\widecheck{\nabla}}

\DeclareMathOperator{\Par}{Par}

\renewcommand{\P}{\mathcal{P}}

\newcommand{\barwb}{\widecheck{w}}
\newcommand{\N}{\mathcal{N}}
\newcommand{\M}{\mathcal{M}}

\newcommand{\pairsr}{\ensuremath{\NN^\ell \times \Par}}
\newcommand{\tE}{ \check{\mathsf H} }   
\newcommand{\stE}{\check{J}}  
\newcommand{\flagh}{\ensuremath{\mathsf{h}}}
\newcommand{\nsC}{\ensuremath{\mathsf{C}}}
\newcommand{\nsM}{\ensuremath{\mathsf{M}}}
\newcommand{\alg}{\ensuremath{\nsM^\Q_{\alpha, k}}}

\newcommand{\sym}{{\mbox {Sym}}}

\newcommand{\pibold}{{\boldsymbol \pi }}
\newcommand{\Weyl}{\pibold }

\newcommand{\Grow}{\mathsf{G}}   
\newcommand{\Growpm}{\mathsf{G}^\pm}   
\newcommand{\Gcol}{\tilde{\mathsf{G}}}   
\newcommand{\Gcolpm}{\tilde{\mathsf{G}}^\pm}   

\newcommand{\Pisf}{{\mathsf \Pi }}

\newcommand{\NN}{{\mathbb N}}
\newcommand{\QQ}{{\mathbb Q}}

\newcommand{\ZZ}{{\mathbb Z}}
\renewcommand{\AA}{{\mathbb A}}

\newcommand{\Acal}{{\mathcal A}}

\newcommand{\Htild}{\tilde{H}}

\DeclareMathOperator{\dinv}{dinv}
\DeclareMathOperator{\inv}{inv}

\DeclareMathOperator{\pol}{pol}

\DeclareMathOperator{\FSSYT}{FSW}

\newcommand{\LD}{{\mathrm{LD}}}
\newcommand{\dcomp}{{\mathrm{dcomp}}}

\newcommand{\xx}{{X}}

\newcommand{\spa}{\hspace{.3mm}}

\newcommand{\e}{\underline{e}}

\newcommand\FrenchYD[1]{
    \foreach \rowlen [count=\y from 0] in {#1} {
        \foreach \x in {1,...,\rowlen} {
            \draw (\x-1,\y) rectangle (\x,\y+1);
        }
    }
}
\newcommand\PFmn[4]{
    \draw[help lines] (#1) grid ($(#1)+(#2,#3)$);
    \draw[thick, gray, dashed] (#1) -- ($(#1)+(#2,#3)$);
    \xdef\te{0} 
    \foreach \x/\y [count=\i] in {#4}{  
        \draw[line width=1.5pt, blue, line cap=round] ($(#1)+(\te,\i-1)$) -- ($(#1)+(\x,\i-1)$);
        \draw[line width=1.5pt, blue, line cap=round] ($(#1)+(\x,\i-1)$) -- ($(#1)+(\x,\i)$);
        \node at ($(#1)+(\x+0.5,\i-0.5)$) {$\y$};
        \ifnum\i=#3 
            \draw[line width=1.5pt, blue, line cap=round] ($(#1)+(\x,#3)$) -- ($(#1)+(#2,#3)$);
        \fi
        \xdef\te{\x}
    }
}
\newcommand\fillshade[2]{
    \foreach \x/\y in {#2}{
        \path[fill, blue!15] ($(#1)+(\x,\y)$) rectangle +(1,1);
    }
}
\newcommand\markdr[2]{
    \foreach \x/\row in {#2}{
        \node[font=\Large] at ($(#1)+(\x-0.3, \row-0.5)$) {$\star$};
    }
}
\newcommand\touchpoints[2]{
    \foreach \x/\y in {#2}{
        \filldraw ($(#1)+(\x,\y)$) circle (2.5pt);
    }
}

\title{The nonsymmetric compositional Delta  theorem}

\author{Dun Qiu}
\address{Center for Combinatorics, LPMC, Nankai University, Tianjin 300071, P. R. China}
\email{qiudun@nankai.edu.cn}
\thanks{Dun Qiu is supported in part by the Fundamental Research Funds for the Central Universities, the National Natural Science Foundation of China (12271023 and 12171034),
and the Natural Science Foundation of Tianjin (24JCZDJC01390).}

\author{Minhao Zhang}
\address{Center for Combinatorics, LPMC, Nankai University, Tianjin 300071, P. R. China}
\email{2120240009@mail.nankai.edu.cn}
\date{\today}

\begin{document}

\subjclass[2020]{Primary: 05E05; Secondary: 05E10, 33D52, 16T30}

\begin{abstract}
Extending the symmetric framework of D'Adderio and Mellit, we establish a nonsymmetric generalization of the compositional Delta theorem. Building on Blasiak et al.'s theory of flagged LLT polynomials, we derive signed and unsigned nonsymmetric identities evaluated in terms of flagged LLT polynomials. Furthermore, by introducing nonsymmetric variants of the $\nabla$ and $\tau^*$ operators, we obtain a novel operator formulation. We show that applying Weyl symmetrization to these nonsymmetric identities systematically recovers the original compositional Delta theorem. Finally, we propose analogous conjectures regarding stable atom positivity.
\end{abstract}

\maketitle
\section{Introduction}
In the 1990s, Garsia and Haiman \cite{GH93} initiated the study of the \(\mathfrak{S}_n\)-module of diagonal harmonics, 
and conjectured that its Frobenius characteristic is exactly \(\nabla e_n\).
Here, \(\nabla\) is the celebrated \emph{nabla operator} acting on the ring of symmetric functions \(\sym[X]\), first introduced in \cite{BG99}.
This conjecture was later proved by Haiman in \cite{Haiman02}.

In 2005, a conjectural combinatorial formula for \(\nabla e_n\) was proposed, 
indexed by labelled Dyck paths (also known as parking functions) by Haglund et al. \cite{HaHaLoReUl05}.
In 2012, the compositional shuffle conjecture was introduced by Haglund, Morse, and Zabrocki \cite{HagMoZa12}, 
which refined the original shuffle conjecture.
Building on these developments, Carlsson and Mellit \cite{cmellit} introduced the celebrated Dyck path algebra in 2018 and 
proved the compositional shuffle conjecture, thereby settling the longstanding shuffle conjecture.
The shuffle theorem admits a far-reaching generalization, known as the $(km,kn)$-compositional shuffle theorem.
Its algebraic framework was introduced by Gorsky and Negu\c{t} \cite{GorskyNegut15}, 
while the full combinatorial statement was formulated by Bergeron et al. in \cite{BeGaSeXi16}. 
This conjecture was subsequently proved by Mellit in \cite{mellit}.
For a more comprehensive overview of the recent progress on the shuffle theorem and its generalizations, 
we refer to 
\cite{BlasiakHaimanMorsePunSeelinger24, BlasiakHaimanMorsePunSeelinger25, BlasiakHaimanMorsePunSeelinger25b, 
	BlasiakHaimanMorsePunSeelinger23b, KimOh26, KimLeeOh25, KimOh24}.

Around the same time, the Delta conjecture was introduced by Haglund, Remmel, and Wilson \cite{HaglundRemmelWilson18}, which admits both a rise version and a valley version.
In 2021, D'Adderio, Iraci, and Vanden Wyngaerd \cite{theta} introduced the \(\Theta\) operator and generalized the Delta conjecture 
to the compositional Delta conjecture, which can be stated as
\[\Theta_{e_k} \nabla C_{\alpha}(X;t)=\mathop{\sum_{P\in \LD(n)^{\ast k}}}_{\dcomp(P)=\alpha}q^{\area(P)}t^{\dinv(P)} \xx^P.\]
In 2022, D'Adderio and Mellit \cite{proofdelta} completed the proof of this conjecture, 
thereby proving the rise version of the Delta conjecture.
The Delta conjecture and its compositional refinement have since become central topics in algebraic combinatorics, see
\cite{BlasiakHaimanMorsePunSeelinger23, DaddIraVan19a, HagRhSh18, HagRhSh19, QW, Rhoades18, Romero17, Zabrocki19, Zabrocki19b}.

The above developments all took place in the symmetric setting; parallel progress has also been made in the nonsymmetric world.
Opdam~\cite{Opdam}, Macdonald~\cite{macdonald}, and Cherednik~\cite{cherednik} pioneered an independent line of research 
on nonsymmetric Macdonald polynomials \(E_{\alpha}(x_1, \ldots, x_n; q, t)\) for \(\alpha \in \mathbb{Z}^n\).
This nonsymmetric framework, which combines tools from representation theory and topology, 
has grown into a rich independent research area.
Furthermore, the study of nonsymmetric Macdonald polynomials has shed new light on the symmetric theory, 
see, e.g., \cite{Knop97, Sahiinterpolation, HalversonRam, Baratta, BechtloffWeising23, HagHaiLo08}.
However, early work only extended the integral form of Macdonald polynomials, \(J_\mu(X;q,t)\), to the nonsymmetric setting, 
but failed to generalize the modified Macdonald polynomials \(\tilde{H}_\mu(X;q,t)\).
Recently, Blasiak et al. \cite{flaggedllt} introduced the plethysm operator \(\Pisf_\ell\), 
which parallels the symmetric operation \(f[X] \mapsto f[X/(1-t)]\).
Using this operator, they constructed the modified \(\ell\)-nonsymmetric Macdonald polynomials \(\tE_{\eta|\lambda}(X; q, t)\), 
solving this longstanding open problem.
In their work, they also introduced flagged LLT polynomials, 
and showed that the modified \(\ell\)-nonsymmetric Macdonald polynomials admit a positive expansion in terms of these flagged LLT polynomials.

Building on the framework of \cite{flaggedllt}, Blasiak et al. \cite{nsshuffle} revisited the algebraic techniques developed in the proofs of the shuffle theorems \cite{cmellit, mellit}.
Specifically, they constructed the \emph{nonsymmetric nabla operator} \(\modnab\) acting on modified \(\ell\)-nonsymmetric Macdonald polynomials, 
which parallels the \(\nabla\) operator in the symmetric setting.
This provided a nonsymmetric, fully parallel counterpart of the $(km,kn)$-compositional shuffle theorem.
Moreover, the combinatorial side of their identity is exactly a positive linear combination 
of the flagged LLT polynomials introduced in \cite{flaggedllt}.

In this paper, building on the framework of \cite{flaggedllt, nsshuffle}, 
we reorganize and rewrite the proof of the compositional Delta conjecture from \cite{theta, proofdelta}.
Using the language of these works, we first establish our first main result, a signed nonsymmetric compositional Delta theorem:

\begin{thm}[Signed nonsymmetric compositional Delta theorem, see Theorem \ref{q ns delta unmod}]
	\begin{equation}\label{eq:signed_delta}
		\alg = \mathop{\sum_{(\pi, dr)\in \mathrm{D}(\alpha)^{\ast k}}} q^{\area(\pi, dr)} \Growpm_\ell(\hat{\pi}', \Sigma_\pi)\,.
	\end{equation}
\end{thm}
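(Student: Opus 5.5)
We argue by induction on the length of $\alpha$ together with $k$, imitating the D'Adderio--Mellit proof of the compositional Delta conjecture, but transplanted into the nonsymmetric Dyck path algebra of \cite{nsshuffle}. On the algebraic side, $\alg$ should be built from the $\ell$-variable version of the Carlsson--Mellit construction. One starts from the vacuum, applies the raising operators $d_-$, the lowering operators $d_+$, and the $\Theta$-type insertion operators. The nonsymmetric Pisf-plethysm $\Pisf_\ell$ of \cite{flaggedllt} plays the role of $f\mapsto f[X/(1-t)]$.

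On the combinatorial side, we use the characterization of \cite{nsshuffle}: sums over Dyck paths of $\Growpm_\ell(\hat\pi',\Sigma_\pi)$ are the images of \emph{characteristic functions} $\chi(\pi)$ of paths. These satisfy linear recursions under $d_-$ and $d_+$, and the recursion is read off from the last step of the path. The plan is to extend this to paths with decorated rises $dr$.

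\textbf{Step 1 (combinatorial recursion).} Define $\chi(\pi,dr)$ for decorated partial paths, with $\ell$ distinguished columns tracked by the flagging. Removing the last north step or the last decorated rise should give a recursion of the form
\[
\chi(\pi,dr)=\begin{cases} d_-\,\chi(\pi^{\flat}) & \text{(undecorated step)},\\ q^{-\text{area change}}\,\Phi\,\chi(\pi^{\flat}) & \text{(decorated rise)}.\end{cases}
\]
Here $\Phi$ is the nonsymmetric analogue of the D'Adderio--Mellit operator $d_-^{*}$ (equivalently $-q\,d_-\,y$-twisted). We prove this recursion by a local analysis of the flagged LLT colouring: the new cell enters one new diagonal, and the signs in $\Growpm_\ell$ come from the $\Pisf_\ell$ expansion. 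The tool here is the signed flagged-LLT linearity of \cite{flaggedllt}, which is the Schur-positivity-type relation under adding a cell.

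\textbf{Step 2 (algebraic recursion).} Show that $\alg$ satisfies the same recursion in $\alpha$ and $k$. This requires the nonsymmetric version of the D'Adderio--Mellit identity expressing $\Theta_{e_k}$ through the relation $d_- d_-^{*}-d_-^{*}d_-=$ (a multiple of) $d_-$ with the $y$-variables. We would verify the needed commutation relations, between the $\Pisf_\ell$-conjugated $\Theta$ and $d_\pm$, on the basis $\tE_{\eta|\lambda}$ using the eigenvalue description of $\modnab$ from \cite{nsshuffle}.

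\textbf{Step 3 (conclusion).} With the base case $\alpha=\emptyset$, $k=0$ trivial, the two sides satisfy identical recursions, so summing $q^{\area}$-weighted characteristic functions over $\mathrm{D}(\alpha)^{\ast k}$ yields the identity.

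The main obstacle is Step 2. There we need the nonsymmetric $\Theta$ (or $d_-^{*}$) relations to survive the $\ell$ distinguished variables. The first thing to try is to prove them on the symmetric tail using D'Adderio--Mellit, and then propagate them through the Demazure--Lusztig operators $T_i$, using that $d_\pm$ intertwine $T_i$ as in \cite{nsshuffle}.
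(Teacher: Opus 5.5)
There is a genuine gap, and it is where you put the main obstacle: Step~2 is left open, and the tool you propose for it is not available.

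\textbf{Step 2 targets the wrong object.} A nonsymmetric analogue of \eqref{thetanabla}, relating a nonsymmetric $\Theta$ operator such as $\modthe_f$ to $\modnab$ and to $d_\pm$, is exactly the open problem posed in the paper's final section. It is also not what $\alg$ is made of. By \eqref{defnalg},
\[
\alg=(-1)^n\PP_\ell^{-1}t^{\ell-|\alpha|}\M^\Q_k\bigl(y_1^{\alpha_1-1}\cdots y_\ell^{\alpha_\ell-1}d_+^\ell(1)\bigr),
\]
with $\M^\Q=\Q_\ell^{-1}\M\Q_\ell$ acting on $V_\ell$. No $\Pisf_\ell$, no $\tE_{\eta|\lambda}$ and no $\modnab$ appear. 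The intertwining relations a recursion argument would use are already inherited from D'Adderio--Mellit (Proposition~\ref{prenewM}): $\M^\Q T_i=T_i^{-1}\M^\Q$, $\M^\Q d_-=d_-\M^\Q$ and $\M^\Q d_+=\frac{-y_1}{1-uy_1}d_+^*\M^\Q$.

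\textbf{Step 1 is not well posed as written.} In these conventions $d_-^*=d_-$, so the twisted operator is $d_+^*$, and your $\Phi$ is left undefined. Also $d_-$ lowers $\ell$ and $d_+$ raises it, not the reverse. Finally, the bridge to $\Growpm_\ell$ goes through $\PP_\ell$, not $\Pisf_\ell$; $\Pisf_\ell$ is what removes the signs in the unsigned version.

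\textbf{The missing idea.} No new recursion is needed, because both ingredients of the symmetric proof are already identities in $V_\ell$, before the final $d_-^\ell$ that lands in $\sym[X]$.
\begin{itemize}
\item \cite[Theorem 6.5]{proofdelta} gives $t^{\ell-|\alpha|}\M_k\bigl((y^{CM}_1)^{\alpha_1-1}\cdots(y^{CM}_\ell)^{\alpha_\ell-1}(d^{CM}_+)^\ell(1)\bigr)=M^{\ast k}_\alpha$.
\item \cite[Theorem 6.22]{theta} expands $M^{\ast k}_\alpha=\sum_{(\pi,dr)\in\mathrm{D}(\alpha)^{\ast k}}q^{\area(\pi,dr)}\chi^{CM}_\ell(\hat\pi',\Sigma_\pi)$.
\item Translate via \eqref{CMM}, where each $d^{CM}_\pm$ equals $-\Q d_\pm\Q^{-1}$. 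The two sides pick up $(-1)^\ell$ and $(-1)^{2n-\ell}$, which agree, and you obtain \eqref{micheleeq} in $V_\ell$.
\item Apply $(-1)^n\PP_\ell^{-1}$, and use \cite[Theorem 5.5.4]{nsshuffle} in the form $\Growpm_\ell(\hat\pi',\Sigma_\pi)=\chi^\PP_\ell(\hat\pi',\Sigma_\pi)=(-1)^n\PP_\ell^{-1}\chi_\ell(\hat\pi',\Sigma_\pi)$. This gives the statement.
\end{itemize}
That theorem already contains all the local flagged-LLT analysis you planned for Step~1. The nonsymmetry is carried entirely by the isomorphism $\PP_\ell$.
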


By applying the $\ell$-nonsymmetric plethysm operator $\Pisf_\ell$ to this identity, 
we obtain the (unsigned) nonsymmetric compositional Delta theorem, which serves as our second main result:

\begin{thm}[Nonsymmetric compositional Delta theorem, see Theorem \ref{q ns delta mod} and Corollary \ref{c ns delta mod}]
	\begin{equation}\label{eq1}
		\Pisf_\ell \spa \alg
		= \sum_{\pi \in \mathrm{D}(\alpha)^{\ast k}} \sum_{w \in \FWPF'_\alpha(\pi)} q^{\area(\pi, dr)} t^{\dinv'(\pi, w)} \,\xx^{\content(w)}.
	\end{equation}
\end{thm}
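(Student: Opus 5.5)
The plan is to deduce the unsigned identity \eqref{eq1} from the signed theorem \eqref{eq:signed_delta} by applying $\Pisf_\ell$ term by term. Since $\Pisf_\ell$ is linear and commutes with multiplication by $q$, we get
\[
\Pisf_\ell\,\alg=\sum_{(\pi,dr)\in \mathrm{D}(\alpha)^{\ast k}} q^{\area(\pi,dr)}\,\Pisf_\ell\,\Growpm_\ell(\hat{\pi}',\Sigma_\pi).
\]
The problem is therefore local: for each fixed decorated path $(\pi,dr)$, identify $\Pisf_\ell\,\Growpm_\ell(\hat\pi',\Sigma_\pi)$ with the unsigned flagged LLT polynomial $\Grow_\ell(\hat\pi',\Sigma_\pi)$. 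This mirrors the symmetric fact that plethystically substituting $X\mapsto X/(1-t)$ into a signed LLT series yields the ordinary LLT polynomial.

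For this step I would invoke the result of Blasiak et al.\ \cite{flaggedllt,nsshuffle} relating signed and unsigned flagged LLT series. It states that $\Pisf_\ell$ sends the signed flagged LLT series indexed by a tuple of skew shapes and a flag to the unsigned flagged LLT polynomial with the same data. The work lies in checking that the hypotheses of that result hold for our tuple $\hat\pi'$ and flag $\Sigma_\pi$. The flag must be weakly increasing and compatible with the rows of $\hat\pi'$ for the $\ell$ distinguished variables. This should follow directly from how $\Sigma_\pi$ is read off the path, with the flag bounded by $\ell$ on the rows that touch the diagonal. The decorated rises $dr$ only change the area statistic, not the shape data, so they cause no difficulty here.

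Next, I would expand $\Grow_\ell(\hat\pi',\Sigma_\pi)$ as a generating function over flagged semistandard fillings. In the definition of flagged LLT polynomials, this is $\sum_T t^{\inv(T)}\xx^T$, with the sum over fillings $T$ that respect the flag. I would then build a bijection between these fillings and the flagged word parking functions $w\in\FWPF'_\alpha(\pi)$. The row-reading of $\hat\pi'$ corresponds to the labels on the north steps of $\pi$. Strictness of column labels in consecutive vertical runs corresponds to column strictness in the cells of $\hat\pi'$. The flag condition becomes the restriction, encoded by the prime, that cars in the first $\ell$ rows of each composition block carry the restricted indices. Under this bijection, attacking pairs in the LLT tuple become dinv-pairs of $(\pi,w)$, so $\inv(T)=\dinv'(\pi,w)$, and the contents agree: $\xx^T=\xx^{\content(w)}$.

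I expect the main obstacle to be this last matching of statistics. The decorated-rise modification $\dinv'$ must coincide exactly with the LLT inversion count once the decorated rises are removed in forming $\hat\pi'$, which may require a constant shift absorbed into the definition of $\hat\pi'$. The nonsymmetric flag must also interact correctly with the first row of each block of $\alpha$. I would first verify these on small cases such as $k=0$, where we recover the nonsymmetric shuffle theorem of \cite{nsshuffle}, and then argue generally by induction on the number of decorations.
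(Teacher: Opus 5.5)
Your reduction is the paper's own argument. Apply the $\mathbb{Q}(q,t)$-linear map $\Pisf_\ell$ termwise to \eqref{eq:signed_delta}, use $\Pisf_\ell\,\Growpm_\ell(\hat\pi,\Sigma)=\Grow_\ell(\hat\pi,\Sigma)$ (Theorem \ref{5.4.1}), and then read $\Grow_\ell(\hat\pi',\Sigma_\pi)$ as a generating function over $\FWPF'_\alpha(\pi)$. The hypothesis-checking you plan in the middle is empty, however. $\Sigma_\pi$ is a \emph{marking}: a set of corner boxes of $\hat\pi'$ coming from consecutive north steps in the vertical runs of $\pi$. It is not a flag. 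The flag is the fixed condition $T(j)\le j$ for $j\le\ell$, and Theorem \ref{5.4.1} holds for every $\hat\pi\in\mathrm{D}_\ell(n)$ and every marking.

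The real problem is your last step: the dictionary you describe would not produce the right-hand side of \eqref{eq1}.
\begin{itemize}
\item \textbf{Weak, not strict.} For positive words the row condition is $T(j)\le T(i)$ for $(i,j)\in\Sigma$. Likewise, $\inv(T)$ counts pairs in $\Area(\hat\pi')$ with $T(i)\ge T(j)$. These match labels \emph{weakly decreasing} up the vertical runs and $\dinv'(\pi,w)$, which counts attacking pairs with $w_i\ge w_j$. Your ``column strictness'' is the column-LLT condition. It would give $\Gcol_\ell$, $\FWPF_\alpha$ and $\dinv$, i.e.\ the side of Corollary \ref{cor:ns_delta_final}, not $\FWPF'_\alpha$ and $\dinv'$.
\item \textbf{The flag.} It does not restrict ``the first $\ell$ rows of each block''. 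There is exactly one area-zero row per part of $\alpha$, one for each touch point. These rows occupy positions $1,\dots,\ell$ of the reading order, and the condition is that the $j$-th north step of $\pi$ on the main diagonal has label at most $j$.
\item \textbf{The prime and $\hat\pi'$.} The prime in $\dinv'$ records the weak inequality; it is not a correction for decorations. Also, $\hat\pi'$ is obtained from $\zeta(\pi)=\mathsf{N}^\ell\hat\pi'$ by deleting its initial $\ell$ north steps, not by removing decorated rises.
\end{itemize}
What you are missing is the $\zeta$-map dictionary. The positions $1,\dots,n$ of $T$ are the north steps of $\pi$ in diagonal reading order. Cells of $\Area(\hat\pi')$ correspond to attacking pairs of $\pi$, and $\Sigma_\pi$ corresponds to consecutive steps in vertical runs. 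This dictionary never sees $dr$; decorations enter only through $q^{\area(\pi,dr)}$. So there is no constant shift to absorb and no induction on the number of decorations to carry out.
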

We note that the $\dinv'$ statistic here is distinct from the standard $\dinv$ statistic used in the symmetric setting.
Furthermore, by applying Weyl symmetrization to \eqref{eq1}, we successfully recover the $\omega$ statement of the original compositional Delta theorem (see Proposition~\ref{prop:recover_omega}).

Next, we reformulate our nonsymmetric compositional Delta theorem 
in terms of the nonsymmetric nabla operator $\modnab$ from \cite{nsshuffle}.
In the process, we introduce two natural nonsymmetric generalizations of the $\tau_u^*$ operator, denoted $\overline{\tau_{u,\ell}^*}$ and $\underline{\tau_{u,\ell}^*}$. Using these, together with $\modnab$ and the \emph{signed nonsymmetric nabla operator} $\umnab$ (also introduced in \cite{nsshuffle}), we obtain alternative versions of the nonsymmetric compositional Delta theorems:

\begin{thm}[Alternative formulations, see Corollaries \ref{cor:ns_delta_column} and \ref{cor:ns_delta_final}]
	In terms of the signed flagged column LLT polynomials, we have:
	\begin{multline}\label{eq:alt_column}
		(-1)^{n+k} (-t)^{\ell-|\alpha|} \vartheta \left[\overline{\left(\tau_{qtu,\ell}^*\right)}^{-1} \left(\umnab\right)^{-1} \overline{\left(\tau_{qtu,\ell}^*\right)}^{-1} \right]_k \left( x_1^{\alpha_1} \cdots x_\ell^{\alpha_\ell}\right) \\
		= \mathop{\sum_{(\pi, dr)\in \mathrm{D}(\alpha)^{\ast k}}} q^{\area(\pi, dr)} \Gcolpm_\ell(\hat{\pi}', \Sigma_\pi).
	\end{multline}
	Applying plethysm and shifting to the standard formulation yields:
	\begin{equation}\label{eq2}
		(-1)^{n+k} \vartheta \left[\underline{\left(\tau_{qtu,\ell}^*\right)}^{-1} \modnab^{-1} \underline{\left(\tau_{qtu,\ell}^*\right)}^{-1}\right]_k \nsC_\alpha 
		= \sum_{\pi \in \mathrm{D}(\alpha)^{\ast k}} \sum_{w \in \FWPF_\alpha(\pi)} q^{\area(\pi, dr)} t^{\dinv(\pi,w)} \xx^{\content(w)}.
	\end{equation}
\end{thm}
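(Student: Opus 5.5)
The plan is to derive both identities from the signed and unsigned nonsymmetric compositional Delta theorems (Theorem \ref{q ns delta unmod} and Theorem \ref{q ns delta mod}), taken as already established. What remains is an operator identity expressing $\alg$ through $\umnab$ and the new operators $\overline{\tau_{u,\ell}^*}$, $\underline{\tau_{u,\ell}^*}$, followed by a change of combinatorial model from row to column flagged LLT polynomials.

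For \eqref{eq:alt_column}:
\begin{enumerate}
\item First I would recall how $\alg$ is built in the Dyck path algebra: $e_k$ is inserted through the $d_-$, $d_+$, $d_+^*$ generators acting on $x_1^{\alpha_1}\cdots x_\ell^{\alpha_\ell}$, and $\vartheta$ records the passage to $\Theta_{e_k}$.
\item Next I would mimic the symmetric computation of D'Adderio--Mellit. There, $\Theta_{e_k}\nabla$ is rewritten as the $u^k$ coefficient of a conjugate of the form $\tau^*_{-u}\,\nabla^{-1}\,\tau^*_{-u}$, up to sign.
\item In the nonsymmetric setting, the key lemma to prove is the commutation relation between $\overline{\tau_{u,\ell}^*}$ and the generators $d_\pm$ and $T_i$, and likewise between $\umnab$ and these generators. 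The expected statement is that $\overline{\tau_{u,\ell}^*}$ intertwines $d_-$ with a deformed $d_-$ while commuting with the $T_i$ and $y_i$.
\item With these relations, extracting $[\,\cdot\,]_k$ reproduces $\alg$ up to the sign $(-1)^{n+k}(-t)^{\ell-|\alpha|}$. This sign comes from the normalization of $\umnab$ on monomials and from the degree shift.
\item Finally I would substitute Theorem \ref{q ns delta unmod}. Its right side is written in row form $\Growpm_\ell(\hat\pi',\Sigma_\pi)$, so I need to convert it to $\Gcolpm_\ell(\hat{\pi}',\Sigma_\pi)$. I would do this with the row/column duality for flagged LLT polynomials from \cite{flaggedllt}, which is given by the involution $\omega$-type reflection of tuples, applied term by term.
\end{enumerate}

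For \eqref{eq2}, I would apply $\Pisf_\ell$ to \eqref{eq:alt_column}. The needed inputs are the intertwining relations
\[
\Pisf_\ell\,\umnab=\modnab\,\Pisf_\ell
\qquad\text{and}\qquad
\Pisf_\ell\,\overline{\tau_{u,\ell}^*}=\underline{\tau_{u,\ell}^*}\,\Pisf_\ell .
\]
The first is from \cite{nsshuffle}. The second I would take as the defining property of $\underline{\tau_{u,\ell}^*}$, or else check it on the basis $\tE_{\eta|\lambda}$. I would also use that $\Pisf_\ell$ applied to the monomial gives $\nsC_\alpha$, up to the factor $(-t)^{\ell-|\alpha|}$, which cancels the extra sign.

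On the combinatorial side, I would invoke the positivity expansion from \cite{flaggedllt} to expand $\Pisf_\ell\Gcolpm_\ell$ as a $t^{\dinv}$-weighted sum over fillings. Comparing this with Corollary \ref{c ns delta mod} should give the word set $\FWPF_\alpha(\pi)$ with $\dinv$ in place of $\FWPF'_\alpha$ with $\dinv'$. This matches the fact that the column version reverses the reading order.

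The main obstacle is the commutation step, item 3. It has two parts: proving that $\overline{\tau^*}$ interacts correctly with $d_-$ on the $\ell$-nonsymmetric spaces, and tracking the power-of-$t$ and sign normalizations so that the prefactor comes out exactly. My first attempt would be to verify the relations on the $\tE_{\eta|\lambda}$ eigenbasis, where $\modnab$ is diagonal.
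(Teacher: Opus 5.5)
There is a genuine gap: the antilinear involution $\barwb$ (equivalently $\barwb^\PP=\PP_\ell^{-1}\barwb\PP_\ell$) never appears in your plan, and it is what makes both the operator identity and the row-to-column conversion work.

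\textbf{The operator side.} Your step 4 asserts that $\bigl[\overline{\tau^*_{qtu,\ell}}^{\,-1}\umnab^{-1}\overline{\tau^*_{qtu,\ell}}^{\,-1}\bigr]_k(x_1^{\alpha_1}\cdots x_\ell^{\alpha_\ell})$ is a scalar multiple of $\alg$. This is false. What holds is
$\alg=(-1)^k(-t)^{\ell-|\alpha|}\,\barwb^\PP\bigl[\overline{\tau^*_{qtu,\ell}}^{\,-1}\umnab^{-1}\overline{\tau^*_{qtu,\ell}}^{\,-1}\bigr]_k(x_1^{\alpha_1}\cdots x_\ell^{\alpha_\ell})$.
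It follows from the factorization $\M^\Q=\barwb\,(\tau^*_{qtu,\ell})^{-1}\Mnab^{-1}(\tau^*_{qtu,\ell})^{-1}$, together with $d_+^\ell(1)=(-1)^\ell y_1\cdots y_\ell$ and $\Mnab^\PP=\umnab$.

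That factorization needs two inputs your plan lacks:
\begin{enumerate}
\item the identity $\N^\Q=\Mnab\barwb=\barwb\Mnab^{-1}$ from \cite{nsshuffle};
\item the identity $\M^\Q=\tau^*_{u,\ell}\Mnab\tau^*_{u,\ell}\barwb$. You prove it by checking that both sides agree on $1$ and satisfy the same intertwining relations with $T_i,d_-,d_+$, which generate $\bigoplus_\ell y_1\cdots y_\ell V_\ell$ from $1$. The $d_+$ case uses $\barwb d_+=-t^{-\ell}d_+\barwb$, $\Mnab d_+=-(qt)^{-1}z_1d_+\Mnab$, $z_1d_+=-qt^{\ell+1}y_1d_+^*$ and $d_+^*\tau^*_{u,\ell}=(1-uy_1)\tau^*_{u,\ell+1}d_+^*$.
\end{enumerate}
Comparing (2) with $\M^\Q=\tau^*_{u,\ell}\Mnab\barwb(\tau^*_{qtu,\ell})^{-1}$ gives $\barwb(\tau^*_{qtu,\ell})^{-1}=\tau^*_{u,\ell}\barwb$. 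This is where the inverses and the rescaling $u\mapsto qtu$ come from, not a $\tau^*_{-u}$-conjugate.

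The commutation relations you call the main obstacle are already known, and you misstate them. $\tau^*_{u,\ell}$ commutes with both $d_-$ and $d_+$; the $(1-uy_1)$-deformation occurs only with $d_+^*$ and $z_1$ \cite[Proposition 3.24]{mellit}.

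\textbf{The row-to-column step.} This also fails as stated. If the left side were a multiple of $\alg=\sum q^{\area(\pi,dr)}\Growpm_\ell(\hat\pi',\Sigma_\pi)$, applying a duality to the right side alone proves nothing. The conversion must be an operator applied to both sides and absorbed on the left. That operator is $\barwb^\PP$, which satisfies $\barwb^\PP\Growpm_\ell(\pi,\Sigma)(X;t)=(-1)^n\Gcolpm_\ell(\pi,\Sigma)(X;t^{-1})$ \cite[Proposition 8.4.7]{nsshuffle}. Applying $(-1)^n\vartheta\barwb^\PP$ to the factorized signed theorem cancels the leading $\barwb^\PP$, since it is an involution. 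The $\vartheta$ then undoes its antilinearity: $q^{-\area}\mapsto q^{\area}$ and $(-t^{-1})^{\ell-|\alpha|}\mapsto(-t)^{\ell-|\alpha|}$. So the $\vartheta$ in the statement is the trace of $\barwb$, not of $\Theta_{e_k}$.

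\textbf{The plethysm step for \eqref{eq2}.} You cannot apply $\Pisf_\ell$ directly. It does not commute with $\vartheta$, and \eqref{eq5.4.1} concerns $\Gcolpm_\ell(\cdot)(X;t^{-1})$ rather than $(X;t)$. For example, for $\ell=0$ and the path of size $n=1$, $\Gcolpm_0=(1-t^{-1})p_1$, whose image under $\Pisf_0$ is $-t^{-1}p_1\neq p_1$. Apply $\vartheta\Pisf_\ell\vartheta$ instead:
\begin{itemize}
\item The right side becomes $\sum q^{\area}\Gcol_\ell(\hat\pi',\Sigma_\pi)$. This is the $\FWPF_\alpha$ sum directly by the definition of $\Gcol_\ell$, so no comparison with Corollary \ref{c ns delta mod} is needed.
\item On the left, the conjugation turns $(-t)^{\ell-|\alpha|}$ into $(-t)^{|\alpha|-\ell}$, which is exactly the factor in $(-t)^{|\alpha|-\ell}\Pisf_\ell(x_1^{\alpha_1}\cdots x_\ell^{\alpha_\ell})=\nsC_\alpha$.
\end{itemize}
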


It is worth noting that when $k=0$, our results naturally reduce to the nonsymmetric shuffle theorems. Specifically, setting $k=0$ in \eqref{eq:signed_delta} recovers the $m=n=1$ case of \cite[Theorem 6.3.1]{nsshuffle}. Analogously, specializing \eqref{eq2} to $k=0$ yields the $m=1$ case of \cite[Theorem 1.1.1]{nsshuffle}.

The flagged word parking functions $\FWPF_\alpha(\pi)$ here differ from standard parking functions in that 
they impose an additional flagged condition on the labels of the north steps that touch the main diagonal.
Once again, applying Weyl symmetrization to the unmodified identity allows us to recover the original compositional Delta theorem (see Proposition~\ref{restore}).


Both of our two versions of the nonsymmetric compositional Delta theorem are fully parallel to the classical symmetric case.
A key feature is that the combinatorial sides of both of our identities are given by a positive linear combinations of 
the flagged LLT polynomials from \cite{flaggedllt}, which can also be phrased purely in terms of Dyck path combinatorics.

To help the reader navigate the intricate relationships between the various algebraic spaces (such as $\P(\ell)$, $V_\ell$, and $\sym[X]$) and the diverse operators discussed in this work, we provide a comprehensive summary diagram in Figure~\ref{fig:summary}. We recommend referring to this diagram while following the main algebraic proofs.

The paper is organized as follows. 
In Section~2, we review the necessary background material, including symmetric functions, combinatorial objects, the Dyck path algebra, and the flagged LLT polynomials introduced in \cite{flaggedllt, nsshuffle}. 
In Section~3, using the methods from \cite{proofdelta} combined with the results from Section~\ref{flagged}, we give the proof of our first main identity \eqref{eq1}. We also show how the \(\omega\) statement of the compositional Delta theorem can be recovered by applying Weyl symmetrization to \eqref{eq1}. 
In Section~4, we introduce the nonsymmetric generalizations \(\overline{\tau_{u,\ell}^*}\) and \(\underline{\tau_{u,\ell}^*}\) of the \(\tau_u^*\) operator, together with the nonsymmetric nabla operator \(\modnab\) from \cite{nsshuffle}. We then prove our second main identity \eqref{eq2}, and show that the original compositional Delta theorem can also be recovered by applying Weyl symmetrization to \eqref{eq2}. 
Finally, in Section~5, we define the nonsymmetric \(\Theta\) operator and propose several open conjectures.

\begin{figure}[htbp]\label{fig:summary}
\begin{tikzpicture}[scale=1]
\def\drawLineBelowRow#1#2{
  \draw (#2.west|-#2-#1-1.south) -- (#2.east|-#2-#1-1.south);
}
\def\drawBoundingBox#1#2{
  \draw (#2.west|-#2-#1-1.south) -- (#2.east|-#2-#1-1.south) --
  (#2.east|-#2-1-1.north) -- (#2.west|-#2-1-1.north) -- cycle;
}

   \matrix[matrix of math nodes, nodes={anchor=west}] (nsunmod) at (0,10) {
     \node[align=center, minimum width=6.5cm] (nsunmod-1-1) {$\P(\ell)$};\\
     \text{(1) } \Growpm_\ell(\pi, \Sigma)(X;t) = \chi_\ell^\PP(\pi, \Sigma)(X;t) \\
     \text{(2) } \Gcolpm_\ell(\pi, \Sigma)(X;t^{-1}) = \widetilde{\chi}_\ell^\PP(\pi, \Sigma)(X;t)  \\
     \text{(3) } \stE_{\eta|\lambda}(X;q,t) \\
     \text{(4) } (-t)^{|\alpha|-\ell} x^\alpha\\
     \text{(5) } \alg := (-1)^{n} \PP_\ell^{-1} t^{\ell-|\alpha|} \\ 
     \qquad\qquad\quad\times\M^\Q_k \left( y_1^{\alpha_1-1} \dots y_\ell^{\alpha_\ell-1} d_+^\ell (1)\right)\\
     \text{(6) } (-1)^{n+k} (-t)^{\ell-|\alpha|} \vartheta \\ 
     \left[\overline{\left(\tau_{qtu,\ell}^*\right)}^{-1} \left(\umnab\right)^{-1} \overline{\left(\tau_{qtu,\ell}^*\right)}^{-1} \right]_k \left( x_1^{\alpha_1} \dots x_\ell^{\alpha_\ell}\right)\\
   };
   \drawBoundingBox{9}{nsunmod}
   \drawLineBelowRow{1}{nsunmod}

  \matrix [matrix of math nodes, nodes={anchor=west}] (symunmod) at (0,0) {
    \node[align=center, minimum width=4.5cm] (symunmod-1-1) {$V_\ell$}; \\
    \text{(1) } (-1)^d \chi_\ell(\pi, \Sigma)\\
    \text{(2) } \widetilde{\chi}_\ell(\pi, \Sigma)(X;t)  \\
    \text{(3) } \PP_\ell \big(\stE_{\eta|\lambda}(X;q,t)\big) \\
    \text{(4) } \PP_\ell \big((-t)^{|\alpha|-\ell} x^\alpha\big)\\
    \text{(5) } t^{\ell-|\alpha|} \M^\Q_k \left( y_1^{\alpha_1-1} \dots y_\ell^{\alpha_\ell-1} d_+^\ell (1)\right)\\
    \phantom{\text{(6)}}\\
   };
   \drawBoundingBox{7}{symunmod}
  \drawLineBelowRow{1}{symunmod}

  \matrix [matrix of math nodes, nodes={anchor=west}] (nsmod) at (9.5,10) {
    \node[align=center, minimum width=4.5cm] (nsmod-1-1) {$\P(\ell)$};\\
    \text{(1) } \Grow_\ell(\pi, \Sigma)(X;t) \\
    \text{(2) } \Gcol_\ell(\pi, \Sigma)(X;t^{-1}) \\
    \text{(3) } \tE_{\eta|\lambda}(X;q,t) \\
    \text{(4) } \nsC_\alpha(X;t) \\
    \text{(5) } \Pisf_\ell \spa \alg = (-1)^{n} \Pisf_\ell \PP_\ell^{-1} t^{\ell-|\alpha|} \\ 
    \qquad\qquad\qquad\times\M^\Q_k \left( y_1^{\alpha_1-1} \dots y_\ell^{\alpha_\ell-1} d_+^\ell (1)\right)\\
    \text{(6) } (-1)^{n+k} \vartheta 
    \left[\underline{\left(\tau_{qtu,\ell}^*\right)}^{-1} \modnab^{-1} \underline{\left(\tau_{qtu,\ell}^*\right)}^{-1}\right]_k \nsC_\alpha\\
  };
   \drawBoundingBox{8}{nsmod}
  \drawLineBelowRow{1}{nsmod}

  \matrix [matrix of math nodes, nodes={anchor=west}] (symmod) at (9.5,0) {
    \node[align=center, minimum width=6.5cm] (symmod-1-1) {$\P(0) \cong \sym[X]$}; \\
    \text{(1) } \Grow_0(\mathsf{N}^\ell \pi, \Sigma)(X;t) = \omega \chi(\pi, \Sigma_\pi)(X;t) \\
    \text{(2) } \Gcol_0(\mathsf{N}^\ell \pi, \Sigma)(X;t^{-1}) = \chi(\pi, \Sigma_\pi)(X;t^{-1}) \\
    \text{(3) } \omega \Htild_\mu(X;q,t) \\
    \text{(4) } C_\alpha(X;t^{-1})\\
    \text{(5) } \omega \Theta_{e_k} \nabla C_\alpha(X;t)\\
    \text{(6) } \Theta_{e_k} \nabla C_\alpha(X;t)\\
  };
  \drawBoundingBox{7}{symmod}
  \drawLineBelowRow{1}{symmod}

  \draw[->, shorten >=0.1cm, shorten <=0.1cm] (nsunmod) -- (symunmod) node[midway, left] {$\PP_\ell$} node[midway, right] {$\cong$};
  \draw[->, shorten >=0.1cm, shorten <=0.1cm] (nsunmod) -- (nsmod) node[midway, above] {$\Pisf_\ell$};
  \draw[->, shorten >=0.1cm, shorten <=0.1cm] (nsmod) -- (symmod) node[midway, right] {$\Weyl_1 \dots \Weyl_\ell$};
  \draw[->, shorten >=0.2cm, shorten <=0.1cm] (symunmod) -- (symmod) node[midway, above] {$(-1)^d$} node[midway, below] {$\omega d_-^\ell$};

\end{tikzpicture}
\caption{This diagram summarizes the core relations among the main concepts and theorems established in this paper. Restricting to any specific row yields a commutative diagram among the corresponding objects. In particular, row (5) encapsulates the main results of Section~\ref{secnscdt}, while row (6) represents the core content of Section~\ref{secnnnf}.}
\end{figure}

\section{Preliminaries}
In this section, we review the necessary background and fix the notation used throughout this work. We briefly recall the basics of symmetric functions (Section~\ref{subsec:sym}), introduce the combinatorial objects related to the compositional Delta theorem (Section~\ref{combinatorialobj}), and review the algebraic framework of the Dyck path algebra (Section~\ref{subsec:algebra}). We then discuss nonsymmetric plethysm and Weyl symmetrization (Section~\ref{nonmac}), and finally provide a brief overview of flagged LLT polynomials (Section~\ref{flagged}).

\subsection{Symmetric functions}
\label{subsec:sym}
For the foundational theory of symmetric functions, we follow the classical text by Macdonald \cite{mac95} and the lectures of Haglund \cite{hag08}.

A \emph{partition} $\lambda = (\lambda_1, \dots, \lambda_\ell)$ is a finite weakly decreasing sequence of non-negative integers, while a \emph{composition} $\alpha = (\alpha_1, \dots, \alpha_\ell)$ is an ordered sequence of positive integers. Let \(\Par\) denote the set of all partitions.
Let $\gamma = (\gamma_1, \dots, \gamma_\ell)$ denote either a partition or a composition. The elements $\gamma_i$ are called \emph{parts}. 
Its \emph{size} is defined as $|\gamma| := \sum_{i=1}^\ell \gamma_i$, and its \emph{length} $\ell(\gamma)$ is the number of positive parts. 
We write $\lambda \vdash n$ to indicate that $\lambda$ is a partition of size $n$, and $\alpha \vDash n$ to denote that $\alpha$ is a composition of size $n$.

We use French notation to represent the \emph{Young diagram} of a partition \(\lambda\), where each row corresponds to a part \(\lambda_j\), left-justified and ordered from bottom to top. 
For a cell \(x = (i,j)\) in the \(i\)-th column and \(j\)-th row, we define its \emph{arm} \(a_\lambda(x)\), \emph{leg} \(l_\lambda(x)\), \emph{coarm} \(a'_\lambda(x)\), and \emph{coleg} \(l'_\lambda(x)\) as follows: 
\(a_\lambda(x)\) and \(a'_\lambda(x)\) denote the number of cells in the same row strictly to the east and west of \(x\), respectively; 
similarly, \(l_\lambda(x)\) and \(l'_\lambda(x)\) denote the number of cells in the same column strictly to the north and south of \(x\), respectively (see Figure~\ref{fig:arm}).

\begin{figure}[!ht]
    \centering
    \begin{tikzpicture}[scale = 0.5]
        \fill[green] (0,3) rectangle (3,4); 
        \fill[green] (4,3) rectangle (8,4); 
        \fill[green] (3,0) rectangle (4,3); 
        \fill[green] (3,4) rectangle (4,6); 
        \FrenchYD{9, 9, 8, 8, 6, 5, 3}
        \node at (1.5, 3.5) {$a'$};
        \node at (5.5, 3.5) {$a$};
        \node at (3.5, 1.5) {$l'$};
        \node at (3.5, 4.5) {$l$};
        \node at (3.5, 3.5) {$x$}; 
    \end{tikzpicture}
    \caption{Arm, leg, coarm, and coleg of a cell $x$ in a Young diagram.}
    \label{fig:arm}
\end{figure}
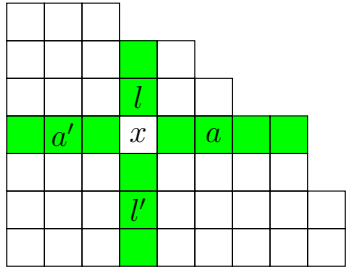

Let \(\Bbbk=\mathbb{Q}(q,t)\) be the coefficient field.  
Denote by \(\sym[X]=\Lambda_{\Bbbk}(X)\) the algebra of symmetric functions in an infinite alphabet \(X=x_1,x_2,\dots\) with coefficients in \(\Bbbk\). 
We recall several standard graded bases of \(\sym[X]\), indexed by partitions \(\lambda\): the elementary symmetric functions \(e_\lambda\), the complete homogeneous symmetric functions \(h_\lambda\), the power-sum symmetric functions \(p_\lambda\), and the Schur functions \(s_\lambda\). 

The \emph{Hall inner product} \(\langle \cdot , \cdot \rangle\) on \(\sym[X]\) can be defined by \(\langle s_\lambda , s_\mu \rangle = \delta_{\lambda\mu}\). 
Let \(\omega : \sym[X] \to \sym[X]\) be the involutory \(\Bbbk\)-algebra automorphism defined by \(\omega(s_\lambda) = s_{\lambda^*}\).

For a partition \(\mu \vdash n\), let \(\Htild_\mu[X; q, t]\) denote the \emph{modified Macdonald polynomials}. 
When the variables and parameters are clear from the context, we simply write \(\Htild_\mu\) or \(\Htild_\mu[X]\). 
The modified Macdonald polynomials can be expanded in terms of Schur functions as
\begin{equation}
\Htild_\mu[X; q, t] = \sum_{\lambda \vdash n} \widetilde{K}_{\lambda\mu}(q, t) s_\lambda,
\end{equation}
where the \emph{modified Kostka coefficients} $\widetilde{K}_{\lambda\mu}(q, t)$ are related to the standard Kostka--Macdonald coefficients $K_{\lambda\mu}(q, t)$ by 
\begin{equation}
\widetilde{K}_{\lambda\mu}(q, t) = K_{\lambda\mu}(q, 1/t) t^{n(\mu)}, \quad \text{with} \quad n(\mu) = \sum_{i \ge 1} \mu_i(i-1).
\end{equation}
The set \(\{\Htild_\mu\}_\mu\) forms a basis of \(\sym[X]\) with coefficients in \(\mathbb{Q}(q,t)\). 
This basis is a modification of the one introduced by Macdonald \cite{mac95}. 
Haiman \cite{Haiman02} proved that these polynomials are the Frobenius characteristics of the Garsia--Haiman modules \cite{GH93}. 

Plethystic notation provides a convenient way to denote substitutions in symmetric function identities. 
Recall that the power-sum symmetric functions $\{p_k\}_{k \ge 1}$ generate $\sym[X]$ as a $\Bbbk$-algebra. 
For any expression $A$ involving indeterminates (including the ground field parameters $q,t$), we define $p_k[A]$ as the result of replacing every indeterminate $a$ in $A$ with $a^k$. 
For any symmetric function $f$, expressed as a polynomial in the $p_k$'s, we define $f[A]$ by substituting $p_k[A]$ for each $p_k$. 
The map $f \mapsto f[A]$ thus defines a ring homomorphism from $\sym[X]$ to the ring of expressions in the indeterminates of $A$.

For the infinite alphabet $X = \{x_1, x_2, \dots\}$, we adopt the standard plethystic convention where the symbol $X$ inside the brackets represents the formal sum $x_1 + x_2 + \cdots$. 
Under this convention, we have $f[X] = f(x_1, x_2, \dots)$. 
A key role is played by the {generating function for homogeneous symmetric functions}
\begin{equation}
\Omega[X] := \sum_{n \ge 0} h_n[X] = \exp\left(\sum_{k \ge 1} \frac{p_k[X]}{k}\right).
\end{equation}
Using plethystic notation, the classical \emph{Cauchy identity} can be expressed in the following compact form:
\begin{equation}
\Omega[XY] = \sum_{\lambda} s_\lambda[X] s_\lambda[Y],
\end{equation}
where $XY$ denotes the product of the two alphabets $X$ and $Y$.

Let $M := (1-q)(1-t)$. For a partition $\mu$, we define the following statistics:
\begin{equation*}
\begin{aligned}
    B_{\mu} &:= \sum_{c\in \mu} q^{a_{\mu}'(c)} t^{l_{\mu}'(c)}, \quad T_{\mu} := \prod_{c\in \mu} q^{a_{\mu}'(c)} t^{l_{\mu}'(c)}, \\
    \Pi_{\mu} &:= \prod_{c\in \mu \setminus \{(1,1)\}} (1-q^{a_{\mu}'(c)} t^{l_{\mu}'(c)}),
\end{aligned}
\end{equation*}
where the empty product is taken to be $1$. For any symmetric function $f[X]$, we also denote 
\begin{equation*}
f^*[X] := f[X/M].
\end{equation*}

The \emph{nabla operator} $\nabla$, introduced in \cite{BG99}, is defined by its action on the modified Macdonald basis as
\begin{equation*}
    \nabla \Htild_{\mu} = T_{\mu} \Htild_{\mu} \quad \text{for all } \mu. 
\end{equation*}
We also employ a sign-twisted variant $\nabla'$, defined by $\nabla' \Htild_{\mu} = (-1)^{|\mu|} T_{\mu} \Htild_{\mu}$. 
This adjusted operator played a central role in the proofs of the compositional shuffle theorem \cite{cmellit} and the compositional Delta theorem \cite{proofdelta}.

The \emph{Delta operators} $\Delta_f$ and $\Delta'_f$ were introduced in \cite{BGHT99}. They act on the modified Macdonald basis as 
\begin{equation*}
    \Delta_f \Htild_{\mu} = f[B_{\mu}] \Htild_{\mu}, \quad \Delta_f' \Htild_{\mu} = f[B_{\mu}-1] \Htild_{\mu} \quad \text{for all } \mu.
\end{equation*}
Let \(\sym[X]^{(n)}\) denote the space of homogeneous symmetric functions of degree \(n\). 
On this space, the nabla operator and Delta operators are related by \[\nabla = \Delta_{e_n} = \Delta'_{e_{n-1}}.\]
Furthermore, for \(1\leq k\leq n\), these operators satisfy the identity
\begin{equation} \label{eq:deltaprime}
\Delta_{e_k} = \Delta_{e_k}' + \Delta_{e_{k-1}}',
\end{equation}
where by convention \(\Delta_{e_k} = \Delta_{e_{k-1}}' = 0\) for \(k > n\).

Following \cite{theta}, for any symmetric function $f$, we define the \emph{Theta operator} $\Theta_f$ on $\sym[X]$ by
\[
    \Theta_f F[X] := \mathbf{\Pi} f^* \mathbf{\Pi}^{-1} F[X],
\]
where the operator $\mathbf{\Pi}$ acts on the modified Macdonald basis by $\mathbf{\Pi} \Htild_\mu := \Pi_\mu \Htild_\mu$.

For \(a \geq 0\), we define the \emph{\(C\) operator} \(C_a\) acting on \(\sym[X]\) by
\begin{equation*}
    C_a f[X] := \left< z^a \right> \left(- \cfrac{1}{t}\right)^{a-1} f\left[X - \cfrac{1-1/t}{z}\right] \Omega [zX].
\end{equation*}
For a composition \(\alpha = (\alpha_1, \ldots, \alpha_\ell)\), the {symmetric function} \(C_\alpha(X;t)\) is defined by the successive action of these operators on the constant \(1\):
\begin{equation*}
    C_\alpha(X;t) := C_{\alpha_1} C_{\alpha_2} \ldots C_{\alpha_\ell}(1).
\end{equation*}

\subsection{Combinatorial objects}
\label{combinatorialobj}
For the combinatorial objects, we adopt the notation established by D'Adderio and Mellit in their work on the compositional Delta theorem \cite{theta, proofdelta}.

Let $n$ be a positive integer. An $(n,n)$-Dyck path $\pi$ is a lattice path consisting of north and east steps from $(0,0)$ to $(n,n)$ that stays weakly above the main diagonal $y=x$.
We denote the set of all such $(n,n)$-Dyck paths by $\mathrm{D}(n)$.

A \emph{labelled Dyck path} (classically known as a \emph{parking function}) is a Dyck path $\pi \in \mathrm{D}(n)$ whose north steps are labeled with positive integers such that the labels strictly increase along each column from bottom to top.
We denote the set of all such labelled paths by $\LD(n)$. 
Throughout this paper, for any labelled Dyck path $P \in \LD(n)$, we consistently use $\pi$ to denote its underlying Dyck path.

For an $(n,n)$-Dyck path $\pi$, we define its \emph{area word} to be the length-$n$ sequence of non-negative integers $a(\pi) = (a_1(\pi), \dots, a_n(\pi))$,
where $a_i(\pi)$ counts the number of full cells between the path and the main diagonal in the $i$-th row (read from bottom to top).

The \emph{double rises} of $\pi$, which geometrically correspond to the indices of consecutive north (NN) steps, are defined as
\begin{equation*}
    r(\pi) := \{i : a_i(\pi) = a_{i-1}(\pi) + 1\}.
\end{equation*}

A \emph{decorated Dyck path} is a pair $(\pi, dr)$, where $\pi \in \mathrm{D}(n)$ and $dr \subseteq r(\pi)$ is a chosen subset of its double rises. 
We denote the set of all decorated Dyck paths with exactly $k$ decorated double rises by $\mathrm{D}(n)^{\ast k}$:
\begin{equation*}
    \mathrm{D}(n)^{\ast k} := \{ (\pi, dr) : \pi \in \mathrm{D}(n),\ dr \subseteq r(\pi),\ |dr|=k \}.
\end{equation*}

Analogously, a \emph{labelled decorated Dyck path} is a pair $(P, dr)$, where $P \in \LD(n)$ and $dr \subseteq r(\pi)$. 
We denote the set of all such paths with $k$ decorated double rises by $\LD(n)^{\ast k}$:
\begin{equation*}
    \LD(n)^{\ast k} := \{ (P, dr) : P \in \LD(n),\ dr \subseteq r(\pi),\ |dr|=k \}.
\end{equation*}

For a decorated Dyck path $(\pi, dr) \in \mathrm{D}(n)^{\ast k}$, its \emph{diagonal composition} $\dcomp(\pi, dr)$ is defined to be a composition of $n-k$. 
Following \cite{proofdelta}, its $i$-th part counts the number of rows between the $i$-th and $(i+1)$-th north steps of $\pi$ that lie on the main diagonal $y=x$, which have not been decorated by elements in $dr$.

For a labelled decorated Dyck path $(P, dr) \in \LD(n)^{\ast k}$, its diagonal composition is simply defined as that of its underlying decorated path, namely $\dcomp(P, dr) := \dcomp(\pi, dr)$.

Given a composition $\alpha\vDash n-k$, we further define the subsets of paths with diagonal composition $\alpha$ as:
\begin{equation*}
\begin{aligned}
    \mathrm{D}(\alpha)^{\ast k} &:= \{ (\pi, dr) \in \mathrm{D}(n)^{\ast k} : \dcomp(\pi, dr) = \alpha \}, \\
    \LD(\alpha)^{\ast k} &:= \{ (P, dr) \in \LD(n)^{\ast k} : \dcomp(P, dr) = \alpha \}.
\end{aligned}
\end{equation*}

Geometrically, the area of a decorated Dyck path is the total number of full cells between the path and the main diagonal, excluding those in the decorated rows. Formally, for a decorated Dyck path $(\pi, dr) \in \mathrm{D}(n)^{\ast k}$, we define
\begin{equation*}
    \area(\pi, dr) := \sum_{i \notin dr} a_i(\pi).
\end{equation*}
For a labelled decorated Dyck path $(P, dr) \in \LD(n)^{\ast k}$, its area is simply defined to be the area of its underlying decorated Dyck path, i.e., $\area(P, dr) := \area(\pi, dr)$.

Finally, let us define the diagonal inversion ($\dinv$) statistic. For a labelled decorated Dyck path $(P, dr) \in \LD(n)^{\ast k}$, this statistic depends only on its underlying parking function $P$. Let $w_i$ denote the label of the north step in the $i$-th row. 
We say that a row index $i$ \emph{attacks} a row index $j$ (where $i < j$) if they lie on the same diagonal relative to the main diagonal, or if row $i$ is exactly one diagonal further from the main diagonal than row $j$. That is, 
\begin{equation*}
    a_i(\pi) = a_j(\pi) \quad \text{or} \quad a_i(\pi) = a_j(\pi) + 1.
\end{equation*}

The $\dinv$ statistic of $(P, dr)$ is defined as the number of attacking pairs $(i,j)$ with $i<j$ that create an inversion based on their labels. Specifically:
\begin{equation*}
\begin{aligned}
    \dinv(P, dr) := &\# \{ 1\leq i< j\leq n : a_i(\pi) = a_j(\pi) \text{ and } w_i < w_j \} \\
    &+ \# \{ 1\leq i< j\leq n : a_i(\pi) = a_j(\pi)+1 \text{ and } w_i > w_j \}.
\end{aligned}
\end{equation*}

Figure~\ref{fig:statistics_example} provides a comprehensive illustration of a non-trivial size 8 labelled decorated Dyck path $(\pi, dr)$ with $k=2$ decorated double rises, detailing the calculation of all defined statistics. In this example, $\dcomp(P, dr) = (3, 3)$. The attack relationships are also highlighted, contributing to a $\dinv$ of 8.

\begin{figure}[!ht]
    \centering
    \begin{tikzpicture}[scale=0.5]
        \fillshade{0,0}{0/2, 1/2, 2/3, 4/6, 5/6, 6/7}
        \PFmn{0,0}{8}{8}{0/1, 0/2, 0/3, 2/2, 4/1, 4/3, 4/4, 6/4}
        \markdr{0,0}{0/2, 4/6}
        \touchpoints{0,0}{0/0, 4/4, 8/8}
        \node[anchor=west] at (9, 7.1) {\textbf{Statistics:}};
        \node[anchor=west] at (9, 5.9) {$\bullet$ $n = 8, k = 2$};
        \node[anchor=west] at (9, 4.6) {$\bullet$ $dr = \{2, 6\}$ (Stars)};
        \node[anchor=west] at (9, 3.3) {$\bullet$ $\area(P, dr) = 6$};
        \node[anchor=west] at (9, 2.0) {$\bullet$ $\dcomp(P, dr) = (3, 3)$};
        \node[anchor=west] at (9, 0.7) {$\bullet$ $\dinv(P, dr) = 9$};
    \end{tikzpicture}
    \caption{Example of a labelled decorated Dyck path $(P, dr) \in \LD(3, 3)^{*2}$.}
    \label{fig:statistics_example}
\end{figure}
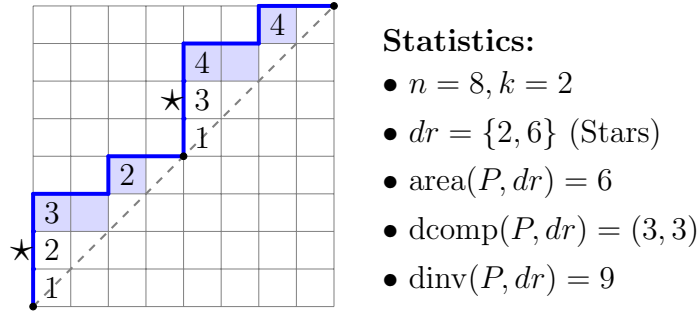

Figure~\ref{fig:statistics_example} provides an example of a size 8 labelled decorated Dyck path $(P, dr) \in \LD(3, 3)^{*2}$ with $2$ decorated double rises. In this example, the diagonal composition is $\dcomp(P, dr) = (3, 3)$. The area cells contributing to $\area(P, dr)$ are shaded. There are $9$ attacking pairs $(i, j)$ with $i<j$ that contribute to the $\dinv$ statistic: $\{(2,5), (2,6), (2,8), (3,4), (3,7), (4,5), (4,6), (4,8), (6,8)\}$.

\subsection{The Dyck path algebra}
\label{subsec:algebra}
The Dyck path algebra $\AA_{t,q}$, first introduced and studied by Carlsson and Mellit \cite{cmellit, mellit}, admits a stable limit version developed by Ion and Wu \cite{IonWu}. 
In this subsection, we follow the notation of \cite{nsshuffle}, working primarily with the conventions established by Mellit \cite{mellit}. 
However, since the original conventions from Carlsson and Mellit \cite{cmellit} are also relevant, we add a superscript \(CM\) to distinguish their operators from ours.

Following the conventions in \cite{nsshuffle}, we introduce the operators of the \emph{Dyck path algebra} \(\AA = \AA_t\). Note that, compared to the original construction in \cite{mellit}, the roles of the parameters $q$ and $t$ are interchanged to maintain consistency with the nonsymmetric Macdonald theory.

\begin{defn}
We define the vector space $V_\ell := \mathbb{Q}(q,t)[y_1, \dots, y_\ell] \otimes \sym[X]_{\QQ(q,t)}$ for each $\ell \in \mathbb{N}$. We then define the operators $T_i \colon V_\ell \to V_\ell$ for $1 \le i \le \ell-1$, $d_+ \colon V_\ell \to V_{\ell+1}$, and $d_- \colon V_\ell \to V_{\ell-1}$ as follows:
\begin{equation}\label{defn1}
    \begin{aligned}
    T_i F & := s_i F + (1-t) y_i \frac{F - s_i F}{y_i - y_{i+1}}, \\[1ex]
    d_- F & := \langle y_\ell^0 \rangle F[y_1, \dots, y_\ell; X - (t-1)y_\ell] \Omega[-y_\ell^{-1} X], \\[1ex]
    d_+ F & := -T_1 \dots T_\ell \big( y_{\ell+1} F[y_1, \dots, y_\ell; X + (t-1)y_{\ell+1}] \big),
    \end{aligned}
\end{equation}
where $F[y_1, \dots, y_\ell; X] \in V_\ell$. The operators $y_i$ are defined on $V_\ell$ recursively by:
\begin{equation}
    y_1 := \frac{1}{t^{\ell-1}(t-1)} (d_+ d_- - d_- d_+) T_{\ell-1} \dots T_1 \quad \text{and} \quad y_{i+1} := t \, T_i^{-1} y_i T_i^{-1}, \quad 1 \le i \le \ell-1.
\end{equation}
\end{defn}

Additionally, the explicit formulas in \eqref{defn1} imply that the action of the operator $y_i$ on $V_\ell$ corresponds simply to multiplication by the polynomial variable $y_i$.

Let \(V_* := \bigoplus_{\ell \ge 0} V_\ell\) be the direct sum of these spaces. We write \(\AA^* = \AA_{t^{-1}}\) for the algebra generated by the elements $d^*_-$, $d_+^*$, and $T_i^*$. As described by Mellit in \cite[Proposition 3.4]{mellit}, $\AA^*$ admits an action on $V_*$ given by:
\begin{equation}\label{defn2}
    d^*_- F := d_- F, \quad T^*_i F := T_i^{-1} F, \quad d_+^* F := \gamma \big( F[y_1, \dots, y_\ell; X + (t-1)y_{\ell+1}] \big),
\end{equation}
where \(\gamma \colon V_{\ell+1} \to V_{\ell+1}\) is the shift operator defined by 
\[\gamma \big( G[y_1, \dots, y_{\ell+1}; X] \big) := G[y_2, \dots, y_{\ell+1}, qy_1; X].\]

There is a natural isomorphism from $\AA$ to $\AA^*$ mapping each generator to its starred counterpart and sending \(t \mapsto t^{-1}\). We denote by $z_i$ the image of $y_i$ under this isomorphism. Explicitly, the operators $z_i$ acting on $V_\ell$ are given by:
\[
    z_1 := \frac{t^\ell}{1-t} (d^*_+ d_- - d_- d^*_+) T^{-1}_{\ell-1} \dots T^{-1}_1 \quad \text{and} \quad z_{i+1} := t^{-1} \, T_i z_i T_i \quad \text{for } 1 \le i \le \ell-1.
\]

The full Dyck path algebra $\AA_{t,q}$ is defined as the quotient of the free product of $\AA$ and $\AA^*$ by the relations given in \cite[Definition 7.1]{cmellit}.

One of the relations of $\AA_{t,q}$ that we shall use later, considered as an operator equation acting on $V_\ell$, is:
\begin{equation}\label{relation}
    z_1 d_+ = -q t^{\ell+1} y_1 d_+^*.
\end{equation}

\begin{prop}[{\cite[Proposition 3.2.1]{nsshuffle}}]
The operators defined in \eqref{defn1} and \eqref{defn2} give a quiver representation of $\AA_{t,q}$ on the space $V_*$.
\end{prop}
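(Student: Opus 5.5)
The statement is a verification claim: the operators $T_i$, $d_\pm$ from \eqref{defn1} and $T_i^*$, $d_\pm^*$ from \eqref{defn2} satisfy all defining relations of $\AA_{t,q}$ listed in \cite[Definition 7.1]{cmellit}. I will not verify these relations from scratch. Instead, I will transport the known result of Mellit \cite{mellit} (see also \cite[Proposition 3.4]{mellit}) and of Carlsson--Mellit \cite{cmellit} through the change of conventions.

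\textbf{Step 1 (dictionary with Mellit's conventions).} Write down an explicit invertible map $\phi_\ell\colon V_\ell\to V_\ell^{\mathrm{M}}$, where $V_\ell^{\mathrm{M}}$ is Mellit's space. The map $\phi_\ell$ swaps $q\leftrightarrow t$, possibly composed with a plethystic twist in $X$ and a rescaling or sign on the $y_i$. It should intertwine each of our generators with the corresponding generator of \cite{mellit}, up to an explicit scalar or sign such as the $-$ in $d_+$. Checking this is a direct comparison of formulas on the generators $T_i$, $d_-$, $d_+$, $d_+^*$. Since $y_i$ and $z_i$ are defined as words in these generators, their images are then forced.

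\textbf{Step 2 (transport of relations).} The defining relations of $\AA_{t,q}$ are homogeneous in the generators, apart from the scalar constants appearing in relations like \eqref{relation}. So if $\phi$ intertwines generators up to scalars $c_g$, each relation of Mellit's representation pulls back to a relation in our representation with constants multiplied by products of the $c_g$. I will check that these rescaled constants coincide with the ones in \cite[Definition 7.1]{cmellit} after $q\leftrightarrow t$. In particular, I will confirm \eqref{relation}, $z_1d_+=-qt^{\ell+1}y_1d_+^*$, this way. The remaining relations are the affine Hecke relations, $T_i d_- = d_-T_i$, $d_-^2T_{\ell-1}=d_-^2$, $d_+^2$-type braid relations, and their starred analogues. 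Each is either quoted from \cite{cmellit,mellit} or, for the $\AA$ side alone, checked directly from \eqref{defn1} as in \cite{cmellit}.

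\textbf{Step 3 (the isomorphism $\AA\to\AA^*$).} I will check that $z_i$ as written is indeed the image of $y_i$ under $t\mapsto t^{-1}$ and starring. This is consistent with $T_i^*=T_i^{-1}$, since inverting $t$ sends the Hecke relation $(T_i-1)(T_i+t)=0$ to the one satisfied by $T_i^{-1}$ up to normalization.

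\textbf{Main obstacle.} The hard part is Step 1. Keeping track of signs, powers of $t$, and the $q\leftrightarrow t$ swap across the differing conventions of \cite{cmellit}, \cite{mellit} and \cite{nsshuffle} is error-prone, especially for $d_+^*$ with its shift $\gamma$ containing $qy_1$. I will settle it by testing on $V_0$ and $V_1$ with $F=1$ and $F=y_1$: I will compute both sides of \eqref{relation} and of $d_-(d_+d_-^*-d_-^*d_+)$-type relations explicitly to pin down the constants before trusting the general transport.
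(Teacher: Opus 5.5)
Your proposal is correct and follows the same route as the source: the paper only cites Blasiak et al., and that result is Mellit's representation of the double Dyck path algebra on $V_*$ with the roles of $q$ and $t$ interchanged, which is exactly the transport you describe. As the paper itself notes, the operators in \eqref{defn1} and \eqref{defn2} are Mellit's with $q\leftrightarrow t$, so your dictionary $\phi_\ell$ is just the coefficientwise swap $q\leftrightarrow t$, with no plethystic twist, rescaling, or extra sign; the constant-tracking you flag as the main obstacle therefore disappears, which is fortunate, because your Step~2 homogeneity claim would fail for the Hecke relation $(T_i-1)(T_i+t)=0$ if $T_i$ ever had to be rescaled.
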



The CM operators are related to our operators via the correspondence described in \cite[Remark 3.11]{mellit}.
This correspondence is given by the $\mathbb{Q}(q,t)$-linear isomorphism \( \Q_\ell \colon y_1\cdots y_\ell V_\ell \to V_\ell \), 
defined by \( \Q_\ell(y_1\cdots y_\ell v) := v \).

\begin{prop}[{\cite[Proposition 3.2.2]{nsshuffle}}]
The space $(y_1\cdots y_\ell V_\ell)_{\ell\geq0}$ forms a submodule of the action of $\AA_{t,q}$ on $V_*$. 
Moreover, the following operator identities hold on $V_*$:
\begin{equation}\label{CMM}
    \begin{aligned}
    T_i^{CM} &= \Q_\ell T_i \Q_\ell^{-1}, \\
    d_-^{CM} &= \Q_{\ell-1} (-d_-) \Q_\ell^{-1}, \\
    d_+^{CM} &= \Q_{\ell+1} (-d_+) \Q_\ell^{-1}, \\
    y^{CM}_i &= \Q_\ell y_i \Q_\ell^{-1},\\
    d_+^{* CM} &= \Q_{\ell+1} (y_1 d_+^*) \Q_\ell^{-1}.
    \end{aligned}
\end{equation}
\end{prop}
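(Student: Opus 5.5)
The plan is a direct verification on generators. For each operator $A$ in \eqref{defn1}--\eqref{defn2}, I will compute $A(y_1\cdots y_\ell\, v)$ for arbitrary $v\in V_\ell$ and show it has the form $y_1\cdots y_{\ell'}\,w$. The identities \eqref{CMM} then amount to comparing $w=\Q_{\ell'}A\Q_\ell^{-1}(v)$ with Carlsson--Mellit's explicit formulas for $T_i^{CM},d_\pm^{CM},y_i^{CM},d_+^{*CM}$, after the interchange $q\leftrightarrow t$ fixed in Section~\ref{subsec:algebra}. Since $\Q_\ell$ is a linear isomorphism, the same computation shows that the subspaces $y_1\cdots y_\ell V_\ell$ are stable under the generators. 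Hence they form a submodule for the algebra generated by $T_i$, $d_\pm$, $y_i$ and $y_1d_+^*$, which is how the Carlsson--Mellit action is generated.

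\emph{The easy generators.} Multiplication by $y_i$ commutes with multiplication by $y_1\cdots y_\ell$, which gives $y_i^{CM}=\Q_\ell y_i\Q_\ell^{-1}$ immediately. For $T_i$, the product $y_1\cdots y_\ell$ is symmetric in $y_i,y_{i+1}$. It therefore commutes with $s_i$, and it passes through the divided difference $(F-s_iF)/(y_i-y_{i+1})$ because $s_i(y_1\cdots y_\ell F)=y_1\cdots y_\ell\, s_iF$. So $T_i(y_1\cdots y_\ell v)=y_1\cdots y_\ell\,T_iv$, giving the first line of \eqref{CMM}. For $d_+$, write
\[
y_{\ell+1}\,(y_1\cdots y_\ell v)[X+(t-1)y_{\ell+1}] = y_1\cdots y_{\ell+1}\, v[X+(t-1)y_{\ell+1}],
\]
which uses that the $y$'s are unaffected by the plethystic shift in $X$. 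The full product $y_1\cdots y_{\ell+1}$ is symmetric, so it commutes with $T_1\cdots T_\ell$. Hence
\[
\Q_{\ell+1}d_+\Q_\ell^{-1}v=-T_1\cdots T_\ell\bigl(v[X+(t-1)y_{\ell+1}]\bigr),
\]
and this is $-d_+^{CM}v$ by the Carlsson--Mellit definition.

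\emph{The generators $d_-$ and $d_+^*$.} For $d_-$, the factor $y_\ell$ comes out of the substitution, so
\[
d_-(y_1\cdots y_\ell v)=y_1\cdots y_{\ell-1}\,\langle y_\ell^{-1}\rangle\, v[y;X-(t-1)y_\ell]\,\Omega[-y_\ell^{-1}X].
\]
This lies in $y_1\cdots y_{\ell-1}V_{\ell-1}$, and the bracketed coefficient extraction must be matched, up to the sign $-1$, with Carlsson--Mellit's $d_-^{CM}$, which is a $\langle y_\ell^{-1}\rangle$-type extraction. For $d_+^*$, the shift $\gamma$ sends $y_1\cdots y_\ell$ to $y_2\cdots y_{\ell+1}$. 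Thus $d_+^*$ alone misses exactly the factor $y_1$, and
\[
y_1d_+^*(y_1\cdots y_\ell v)=y_1\cdots y_{\ell+1}\,\gamma\bigl(v[X+(t-1)y_{\ell+1}]\bigr).
\]
This yields the last line of \eqref{CMM} and explains why the operator that preserves the submodule is $y_1d_+^*$ rather than $d_+^*$ itself.

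\emph{Main obstacle.} None of these computations is hard; the real work is bookkeeping of conventions. The difficult step is the $d_-$ comparison. There one must reconcile Mellit's $\langle y_\ell^0\rangle$ normalization and the sign of the $(t-1)y_\ell$ plethystic shift with the formula in \cite{cmellit}, under the $q\leftrightarrow t$ swap. This is what produces the sign $-1$ in both $d_\pm^{CM}$, and it is where I expect an error is most likely. To pin the signs down, I would first test both sides on $\ell=1$ with $v=1$, and then use the $\Omega$ expansion to confirm the general case. Finally, to get the quiver-representation statement for $\AA_{t,q}$ on the subspaces, I would invoke \cite[Proposition 3.2.1]{nsshuffle}: the defining relations are inherited from the action on $V_*$, so once stability under the generators is established, no relations need to be rechecked.
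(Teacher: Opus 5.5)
Your proposal is correct. The paper gives no argument of its own for this statement: it imports it from \cite{nsshuffle}, where it records the dictionary of \cite[Remark 3.11]{mellit}. A generator-by-generator comparison of the explicit formulas, as you carry out, is exactly the standard justification, and your computations on Mellit's side are right.

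Three small remarks.

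First, the $d_-$ step is not an obstacle. Carlsson and Mellit define $d_-^{CM}F=-F[X-(q-1)y_\ell]\,\Omega[-y_\ell^{-1}X]\big|_{y_\ell^{-1}}$. After $q\leftrightarrow t$, your displayed formula for $d_-(y_1\cdots y_\ell v)$ gives the second identity at once. The two signs in the $d_\pm^{CM}$ lines come directly from the explicit minus signs in Carlsson--Mellit's $d_-$ and in Mellit's $d_+$; no subtle reconciliation is involved, and no test case is needed.

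Second, you correctly observe that $y_1d_+^*$, and not $d_+^*$, preserves the subspaces. Indeed $d_+^*(1)=1\notin y_1V_1$, and this is precisely why ``submodule'' must be read with respect to the Carlsson--Mellit generators $T_i^{\pm1}$, $d_\pm$, $y_1d_+^*$. You should also mention $T_i^{-1}$ explicitly, although it trivially commutes with multiplication by $y_1\cdots y_\ell$.

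Third, for the same reason your closing appeal to \cite[Proposition 3.2.1]{nsshuffle} does not work as stated. That proposition concerns relations among Mellit's generators, and the subspace is not stable under $d_+^*$, so nothing is simply ``inherited''. The relations on the subspace come instead from Carlsson--Mellit's own theorem for their operators, transported through the identities you have just verified. In any case this is not needed for the statement.
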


Let \(\P(\ell) := \QQ(q,t)[x_1,\dots,x_\ell] \otimes \sym[X]_{\QQ(q,t)}(x_{\ell+1}, \dots)\). 
By definition, an element \(f[X] \in \P(\ell)\) means that $f$ is allowed to be nonsymmetric 
in the first $\ell$ variables $x_1, \dots, x_\ell$, 
but symmetric in the remaining variables $x_{\ell+1}, x_{\ell+2}, \dots$.
We use the notation $f[X_N]$ to denote the truncation \(f[x_1, \dots, x_N, 0, 0, \dots]\).
We work with the action on $\P(\ell)$ given in \cite[Eq.~(41)]{nsshuffle}, which is induced by Mellit's action.

We first recall the isomorphism $\PP_\ell$ on $\P(\ell)$, defined in \cite[Eq.~(40)]{nsshuffle}, by:
\begin{equation}\label{isopr}
    \begin{aligned}
        \PP_\ell \colon \P(\ell) &\xrightarrow{\ \cong \ } V_\ell, \\
        x_1^{\eta_1}\cdots x_\ell^{\eta_\ell} g[x_{\ell+1}+ \cdots ] &\mapsto y_1^{\eta_1}\cdots y_\ell^{\eta_\ell} g[X/(t-1)],
    \end{aligned}
\end{equation}
where $g$ is a symmetric function, and \(\eta_1,\dots, \eta_\ell \in \mathbb{N}\).

Via the isomorphism \eqref{isopr}, the induced action on $\P(\ell)$ is then given by:
\[
    d_-^\PP := \PP_{\ell-1}^{-1} \, d_- \, \PP_\ell,  \quad \text{and} \quad d_+^\PP := \PP_{\ell+1}^{-1} \, d_+ \, \PP_\ell.
\]

Denote by \(\overline{\omega}\) the automorphism of \(\sym[X]\) defined by mapping \(F[X;q,t] \mapsto F[-X;q^{-1},t^{-1}]\).
Carlsson and Mellit \cite{cmellit} extended the operator \(\nabla' \overline{\omega}\) from \(\sym[X]\) to \(V_*\).

\begin{thm}[{\cite[Theorem 7.4]{cmellit}}]
    There exists a unique involution \(\N\) on \(V_*\) satisfying
    \[
        \N(1) = 1, \quad \N d^{CM}_+ = d^{* CM}_+ \N, \quad \N d^{CM}_- = d^{CM}_- \N,\quad \N T^{CM}_i = (T_i^{CM})^{-1} \N.
    \]
    In particular, when restricted to \(V_0 = \sym[X]\), this operator coincides with \(\nabla' \overline{\omega}\).
\end{thm}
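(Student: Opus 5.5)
The approach is the one Carlsson and Mellit use for \cite[Theorem 7.4]{cmellit}: define $\N$ on a spanning set built from $1$ by the $\AA$–generators, then verify the relations. The first step is a spanning statement. Every element of $V_\ell$ is a linear combination of vectors
\[
d_{-}^{CM}\cdots d_{+}^{CM}\cdots (T^{CM}_i)^{\pm1}\cdots (1),
\]
that is, words in $d_\pm^{CM}$ and $T_i^{CM}$ applied to $1\in V_0$. This follows from two facts:
\begin{itemize}
\item $d_+^{CM}$ composed with the $T_i$'s generates the $y$-monomials, by the recursive definition of the $y_i$;
\item $V_0=\sym[X]$ is reached through $d_-^{CM}$ applied to $y$-monomials, using the Hall–Littlewood/$C$-operator description, since $d_-$ of $y_1^{a}\cdots$ produces $C_\alpha$-type functions, which span $\sym[X]$.
\end{itemize}
The relations force a unique definition on this spanning set:
\[
\N\big(w(d_+^{CM},d_-^{CM},T_i^{CM})\,1\big)=w(d_+^{*CM},d_-^{CM},(T_i^{CM})^{-1})\,1 .
\]
Uniqueness is therefore immediate once existence is established.

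Existence is the main obstacle, because this definition must be well defined. It suffices to show that the map $\phi:\AA\to\AA_{t,q}$ given by $d_+\mapsto d_+^*$, $d_-\mapsto d_-$, $T_i\mapsto T_i^{-1}$ intertwines the two module structures on the cyclic module generated by $1$. Concretely, we need: if an element $a\in\AA$ annihilates $1$, then $\phi(a)$ also annihilates $1$.

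I would not check annihilators directly. Instead I would construct $\N$ explicitly, as Carlsson–Mellit do. On $V_0$, set $\N=\nabla'\overline{\omega}$. On $V_\ell$, extend this using the fact that $V_\ell$ is a free module over $V_0$ generated by $y$-monomials. One then proves the intertwining relations by a computation on generating functions. The two key inputs are:
\begin{itemize}
\item the known relation that $\nabla$ conjugates $d_-$-type operators (the plethystic $C$ and $B$ operators) into their $q,t$-inverted counterparts, that is, the classical identities $\nabla C_a\nabla^{-1}$ and the $\overline{\omega}$-conjugation of $\Omega[zX]$;
\item the relation \eqref{relation}, $z_1d_+=-qt^{\ell+1}y_1d_+^*$, which intertwines $d_+$ with $d_+^*$ up to the $y_1$ factor absorbed by the $\Q_\ell$ normalization in \eqref{CMM}.
\end{itemize}

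Finally, involutivity follows from the relations themselves. The composite $\N^2$ fixes $1$ and commutes with $d_\pm^{CM}$ and $T_i^{CM}$, because $\phi$ is an involution on generators. The inverse relation needed here is $\N d_+^{*CM}=d_+^{CM}\N$, which follows from the previous step. Hence $\N^2$ is the identity on the spanning set. The restriction to $V_0$ agrees with $\nabla'\overline{\omega}$ by construction, and that choice is consistent because it satisfies the relations on $d_-$-images of $V_1$.
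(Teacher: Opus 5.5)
The paper does not prove this statement; it quotes it from Carlsson--Mellit. Your skeleton is the right one: uniqueness because $V_*$ is generated from $1$ by $d_\pm^{CM}$ and $T_i^{CM}$, and existence reduced to $\phi$ carrying the annihilator of $1$ into itself. But the step you yourself call the main obstacle is never carried out.

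Setting $\N=\nabla'\overline{\omega}$ on $V_0$ and ``extending using that $V_\ell$ is free over $V_0$ on the $y$-monomials'' does not define anything with the required properties. $\N$ is not $\sym[X]$-linear: $\nabla'$ is not multiplicative, and $\N$ is antilinear. It must intertwine $y_1$ with $z_1$, not with $y_1$. And $d_\pm$ couple the $y$'s to $X$ through the shifts $X\pm(t-1)y$ and the factor $\Omega[-y_\ell^{-1}X]$. So there is no reason a coefficientwise extension should commute with $d_-$ or turn $d_+$ into $d_+^*$, and you never specify one. Pointing to ``$\nabla C_a\nabla^{-1}$'' and to \eqref{relation} does not replace the verification, since \eqref{relation} is an identity inside $\AA_{t,q}$ and says nothing about $\N$.

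Likewise, ``$\N|_{V_0}=\nabla'\overline{\omega}$ by construction'' only relocates the difficulty. Once $\N$ is pinned down on words, its value on the spanning elements $d_-^\ell y_1^{\alpha_1-1}\cdots y_\ell^{\alpha_\ell-1}d_+^\ell(1)$ of $V_0$ is forced. Its agreement with $\nabla'\overline{\omega}$ is then a genuine identity, not a tautology: via \eqref{eq:Calpha} it rewrites $\nabla C_\alpha$ through $d_-$, the $z_i$ and $d_+^*$.

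You should also state that $\N$ is antilinear ($q,t\mapsto q^{-1},t^{-1}$). This is forced. For linear $\N$, the relation $\N T_i=T_i^{-1}\N$ turns $(T_i-1)(T_i+t)=0$ into $(T_i^{-1}-1)(T_i^{-1}+t)\N=0$. Together with $(T_i^{-1}-1)(T_i^{-1}+t^{-1})=0$ this gives $\N T_i=\N$, which is impossible for an injective $\N$.

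Your involutivity argument is circular. Suppose $\N d_+^{CM}=d_+^{*CM}\N$, $\N d_-^{CM}=d_-^{CM}\N$, $\N T_i^{CM}=(T_i^{CM})^{-1}\N$, and $V_*$ is generated from $1$. Then the relation $\N d_+^{*CM}=d_+^{CM}\N$ is \emph{equivalent} to $\N^2=\mathrm{id}$. Your argument gives one direction. Conversely, $\N^2=\mathrm{id}$ gives $\N d_+^{*CM}=\N d_+^{*CM}\N\N=\N\N d_+^{CM}\N=d_+^{CM}\N$. So the relation cannot ``follow from the previous step.'' It needs an independent proof, using how $d_+^{*CM}$ acts on $\AA_t\cdot 1$ through the mixed relations of $\AA_{t,q}$ (such as \eqref{relation}) and the base case $d_+^{CM}(1)=d_+^{*CM}(1)=1$, not just the $\AA_t$-structure used to build $\N$.

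For existence alone there is a clean repair with the tools of the paper. Transport Theorem~\ref{8.4.2} through $\Q$, together with the rescaling by $(-1)^\ell$ on $V_\ell$ that turns $-d_\pm$ into $d_\pm$. This says that $V_*$ with the CM operators is the free cyclic module $\AA_t e_0$. That gives the spanning statement for $V_\ell$, $\ell\ge 1$, which your bullet points do not establish. It also makes $a\cdot 1\mapsto\phi(a)\cdot 1$ automatically well defined, exactly as for $\barwb$. Even so, involutivity and the identification with $\nabla'\overline{\omega}$ on $V_0$ remain the substantive content of the theorem, and your proposal does not supply them.
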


In their proof of the compositional Delta conjecture, D'Adderio and Mellit \cite{proofdelta} introduced the following useful notation from the work of Garsia and Mellit \cite{fiveterm}.

Let \(u\) be a formal variable. 
\[
    \Theta(u) := \sum_{n=0}^\infty (-u)^n \Theta_{e_n},\quad \Delta(u) := \sum_{n=0}^\infty (-u)^n \Delta_{e_n}, \quad \text{and}\quad 
    \tau^*_u := \sum_{n=0}^\infty (-u)^n \underline{e}_n^*,
\]
where the operator \(\underline{e}_n^*\) is simply defined by \(\e_n^*(f) := e_n^*f\) for every symmetric function \(f\).

Using the classical identity \(\sum_{i=0}^n (-1)^i e_{i}h_{n-i} = \delta_{n,0}\), one readily obtains
\[
    \left(\tau_u^*\right)^{-1} = \sum_{n=0}^{\infty} u^n \underline{h}_n^*.
\]

Similarly, in the proof of \cite[Proposition 3.13]{mellit}, Mellit extended the operator \(\tau_u^*\) to \(V_\ell\), defined by:
\begin{equation}\label{deftauell}
    \tau_{u,\ell}^* := \sum_{n=0}^\infty (-u)^n \e_n\left[\frac{X+(t-1)\sum_{i=1}^\ell y_i}{M}\right].
\end{equation}

\begin{prop}[{\cite[Proposition 3.24]{mellit}}]\label{propnewtau}
With the above definition, \(\tau_{u,\ell}^*\) satisfies the following intertwining relations acting on $V_\ell$:
\begin{equation}\label{eqnewtau}
    \begin{aligned}
        \tau_{u,\ell-1}^* d_- & = d_- \tau_{u,\ell}^* & \tau_{u,\ell+1}^* d_+ & = d_+ \tau_{u,\ell}^* \\[1ex]
        \tau_{u,\ell}^* T_i & = T_i \tau_{u,\ell}^* & \tau_{u,\ell}^* y_i & = y_i \tau_{u,\ell}^* \\[1ex]
        d^{*}_+ \tau^*_{u,\ell} & = (1-u y_1) \tau^*_{u,\ell+1} d^{*}_+ & z_1 \tau^*_{u,\ell} & = (1-uy_1) \tau^*_{u,\ell} z_1.
    \end{aligned}
\end{equation}
\end{prop}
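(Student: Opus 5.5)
The plan is to show first that $\tau^*_{u,\ell}$ is simply a multiplication operator on $V_\ell$. Each relation then reduces to tracking how one explicit plethystic factor behaves under the substitutions that appear in the definitions of $d_\pm$, $d_+^*$, $T_i$, $\gamma$ and $z_1$.

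\emph{Step 1: rewrite $\tau^*_{u,\ell}$.} From $\sum_n (-u)^n e_n[A]=\Omega[-uA]$ we get
\[
\tau^*_{u,\ell}=\Omega\Big[-u\,\tfrac{X+(t-1)(y_1+\cdots+y_\ell)}{M}\Big]
=\Omega\Big[\tfrac{-uX}{M}\Big]\prod_{i=1}^{\ell}\Omega\Big[\tfrac{u\,y_i}{1-q}\Big],
\]
using $(t-1)/M=-1/(1-q)$. So $\tau^*_{u,\ell}$ is multiplication by a formal power series $\Phi_\ell$ in $u$. Its coefficients are symmetric functions in $X$ times symmetric polynomials in $y_1,\dots,y_\ell$. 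All identities are to be read coefficientwise in $u$, which settles the convergence issues.

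\emph{Step 2: the easy relations.} Commutation with $y_i$ is immediate. Commutation with $T_i$ holds because $T_i$ is linear over the ring of polynomials symmetric in $y_i,y_{i+1}$, and $\Phi_\ell$ is such a polynomial in each $u$-degree.

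For $d_-$: substituting $X\mapsto X-(t-1)y_\ell$ sends $\Phi_\ell$ to $\Phi_{\ell-1}$. The result no longer involves $y_\ell$, so it can be pulled out of $\langle y_\ell^0\rangle(\cdots)\Omega[-y_\ell^{-1}X]$, giving $d_-\tau^*_{u,\ell}=\tau^*_{u,\ell-1}d_-$.

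For $d_+$: substituting $X\mapsto X+(t-1)y_{\ell+1}$ sends $\Phi_\ell$ to $\Phi_{\ell+1}$. This factor commutes with multiplication by $y_{\ell+1}$. It also commutes with $T_1\cdots T_\ell$ by the $T_i$ case, since it is symmetric in $y_1,\dots,y_{\ell+1}$.

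\emph{Step 3: $d_+^*$.} After the substitution $X\mapsto X+(t-1)y_{\ell+1}$ we again obtain $\Phi_{\ell+1}$. Applying $\gamma$ then replaces $y_1+\cdots+y_{\ell+1}$ by $y_2+\cdots+y_{\ell+1}+qy_1$. Hence
\[
\gamma(\Phi_{\ell+1}\,G)=\Phi_{\ell+1}\cdot\Omega\Big[-u\,\tfrac{(t-1)(q-1)y_1}{M}\Big]\,\gamma(G)=(1-uy_1)\,\Phi_{\ell+1}\,\gamma(G),
\]
because $(t-1)(q-1)=M$ and $\Omega[-uy_1]=1-uy_1$. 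This gives $d_+^*\tau^*_{u,\ell}=(1-uy_1)\tau^*_{u,\ell+1}d_+^*$.

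\emph{Step 4: $z_1$.} Use $z_1=\frac{t^\ell}{1-t}(d_+^*d_--d_-d_+^*)T_{\ell-1}^{-1}\cdots T_1^{-1}$ on $V_\ell$ with $\ell\ge1$.

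For the first term, Steps 2 and 3 give $d_+^*d_-\tau^*_{u,\ell}=(1-uy_1)\tau^*_{u,\ell}\,d_+^*d_-$.

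For the second term we get $d_-d_+^*\tau^*_{u,\ell}=d_-(1-uy_1)\tau^*_{u,\ell+1}d_+^*$. Here we must check that multiplication by $y_1$ commutes with $d_-\colon V_{\ell+1}\to V_\ell$. This holds because $d_-$ only substitutes in and takes the constant term of $y_{\ell+1}$, and $\ell+1\ge2$. We then conclude $d_-d_+^*\tau^*_{u,\ell}=(1-uy_1)\tau^*_{u,\ell}\,d_-d_+^*$.

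Finally, $\tau^*_{u,\ell}$ commutes with the $T_i^{-1}$ by Step 2, which yields $z_1\tau^*_{u,\ell}=(1-uy_1)\tau^*_{u,\ell}z_1$.

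I expect the main obstacle to be bookkeeping rather than any conceptual step. One point is getting the signs in $\Omega[\pm\cdots]$ right, so that the $e_n$-definition matches the factor $(1-uy_1)$ and not its inverse. The other is justifying that multiplication by $y_1$ passes through $d_-$ in Step 4. I would verify both on the lowest-degree term in $u$ before writing out the general argument.
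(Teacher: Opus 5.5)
Your proof is correct. Writing $\tau^*_{u,\ell}$ as multiplication by $\Omega[-uX/M]\prod_i\Omega[uy_i/(1-q)]$ makes the $T_i$, $y_i$ and $d_\pm$ relations immediate, and your computation $\gamma(\Phi_{\ell+1})=\Omega[-uy_1]\,\Phi_{\ell+1}=(1-uy_1)\Phi_{\ell+1}$, using $(t-1)(q-1)=M$, gives the $d_+^*$ relation with the right sign; the $z_1$ relation then follows since $y_1$ commutes with $d_-\colon V_{\ell+1}\to V_\ell$. The paper gives no proof of its own here and simply cites Mellit, and your direct check is the standard argument for this result.
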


We also note the corresponding intertwining relation for the inverse operator:
\begin{equation}\label{eqtau-1}
    d_- \left(\tau_{u,\ell}^*\right)^{-1} = \left(\tau_{u,\ell-1}^*\right)^{-1} d_-.
\end{equation}
Indeed, this follows from a direct computation utilizing the relation $\tau_{u,\ell-1}^* d_- = d_- \tau_{u,\ell}^*$:
\begin{align*}
    d_- \left(\tau_{u,\ell}^*\right)^{-1} 
    &= \left(\tau_{u,\ell-1}^*\right)^{-1} \tau_{u,\ell-1}^* d_- \left(\tau_{u,\ell}^*\right)^{-1} \\[1ex]
    &= \left(\tau_{u,\ell-1}^*\right)^{-1} d_- \tau_{u,\ell}^* \left(\tau_{u,\ell}^*\right)^{-1} \\[1ex]
    &= \left(\tau_{u,\ell-1}^*\right)^{-1} d_-.
\end{align*}

To prove the compositional Delta conjecture, D'Adderio and Mellit \cite{proofdelta} further extended 
the operator \(\Theta(u) \nabla' \overline{\omega}\) to \(V_\ell\), defined in terms of the operator \(\N\) introduced above.

\begin{prop}[{\cite[Proposition 6.4]{proofdelta}}]\label{originalM}
Let \(\M\) be the antilinear operator on \(V_\ell\) which, when restricted to \(V_0 = \sym[X]\), coincides with \(\Theta(u) \nabla' \overline{\omega}\).
    \begin{equation}\label{defnM}
        \M := \tau^*_{u,\ell} \N \left(\tau^*_{qtu,\ell}\right)^{-1}.
    \end{equation}
It satisfies the following intertwining relations:
    \[
    \begin{aligned}
        \M d^{CM}_- & = d^{CM}_- \M & \M T^{CM}_i & = (T_i^{CM})^{-1} \M \\[1ex]
        \M d^{CM}_+ & = (1-uy_1^{CM})^{-1} d_+^{* CM} \M & \M y_1^{CM} & = (1-uy_1^{CM})^{-1} z_1^{CM} \M.
    \end{aligned}
    \]
\end{prop}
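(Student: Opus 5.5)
The plan is to derive each of the four intertwining relations from the defining relations of $\N$ (Carlsson--Mellit, \cite[Theorem 7.4]{cmellit}) and the relations for $\tau^*_{u,\ell}$ in Proposition~\ref{propnewtau}. Throughout we transport everything through the isomorphism $\Q_\ell$ of \eqref{CMM}, so that statements about CM operators become statements about $d_\pm, T_i, y_i, d_+^*, z_1$. A key preliminary observation is that $\tau^*_{u,\ell}$ is multiplication by a plethystic expression in $X$ and $\sum_i y_i$. It is symmetric in the $y_i$ and commutes with multiplication by $y_1\cdots y_\ell$, hence with $\Q_\ell$. Consequently each relation in \eqref{eqnewtau} transfers verbatim to the CM operators, with one adjustment: $d_+^{*CM}$ carries an extra factor $y_1$, which commutes with $\tau^*$.

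The antilinearity of $\N$ must also be tracked. Since $\N$ restricts to $\nabla'\overline{\omega}$, which inverts $q,t$, we have $\N\, c(q,t) = c(q^{-1},t^{-1})\N$ for scalars. We treat $u$ as a formal variable fixed by $\N$. This is precisely why the right factor is $(\tau^*_{qtu,\ell})^{-1}$ rather than $(\tau^*_{u,\ell})^{-1}$: conjugating the plethystic factor $e_n[(X+(t-1)\sum y_i)/M]$ through $\N$ should replace it by its $\overline{\omega}$-twisted version. That version is the $d_+^*$-side analogue, and it rescales the variable by $qt$.

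The relations are then checked one at a time.

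\textbf{$d_-$ and $T_i$.} Both $\N$ and $\tau^*$ commute with $d_-^{CM}$, using \eqref{eqtau-1} for the inverse. The relation $\M d_-^{CM} = d_-^{CM}\M$ follows by pushing $d_-$ through the three factors. The $T_i$ relation is identical: $\tau^*$ commutes with $T_i$, and $\N$ inverts $T_i$, so $\M$ inverts it as well.

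\textbf{$d_+$.} We compute
\[
\M d_+^{CM} = \tau^*_{u,\ell+1}\,\N\,(\tau^*_{qtu,\ell+1})^{-1} d_+^{CM}
= \tau^*_{u,\ell+1}\,\N\, d_+^{CM}(\tau^*_{qtu,\ell})^{-1}
= \tau^*_{u,\ell+1}\, d_+^{*CM}\,\N\,(\tau^*_{qtu,\ell})^{-1}.
\]
By \eqref{eqnewtau} we have $d_+^*\tau^*_{u,\ell} = (1-uy_1)\tau^*_{u,\ell+1}d_+^*$, which rearranges to $\tau^*_{u,\ell+1}d_+^{*CM} = (1-uy_1^{CM})^{-1}d_+^{*CM}\tau^*_{u,\ell}$. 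Substituting this gives $(1-uy_1^{CM})^{-1}d_+^{*CM}\M$, as required.

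\textbf{$y_1$.} Here I would express $y_1^{CM}$ through the commutator $(d_+d_- - d_-d_+)T_{\ell-1}\cdots T_1$, by the definition of $y_1$. Applying $\M$ and using the three relations already proved, $\M$ carries this commutator to the analogous starred expression, which defines $z_1$. The factor $(1-uy_1)^{-1}$ picked up from $d_+$ needs care, and I would control it with the relation $z_1\tau^* = (1-uy_1)\tau^* z_1$ from \eqref{eqnewtau}. Alternatively, one can use the known identity $\N y_1^{CM} = z_1^{CM}\N$ from \cite{cmellit} and conjugate it by the $\tau$'s, applying the last relation of \eqref{eqnewtau}.

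The main obstacle is the $y_1$ relation together with the scalar bookkeeping. We must confirm that the antilinear twist of $\N$ sends $(\tau^*_{qtu})^{-1}$ to the correct object, and that the normalizations $t^{\ell}$, $t^{\ell-1}(t-1)$ appearing in $y_1$ and $z_1$ are consistent under $t\mapsto t^{-1}$. I would verify these by checking the case $\ell=0$, where $\M$ reduces to $\Theta(u)\nabla'\overline{\omega}$ via the five-term relation \cite{fiveterm}.
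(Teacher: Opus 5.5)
Your proof of the four intertwining relations is sound. The paper gives no argument of its own here; it only cites \cite[Proposition 6.4]{proofdelta}. The $d_\pm$ and $T_i$ computations are right. Your second route for $y_1$ closes the argument: conjugate $\N y_1^{CM}=z_1^{CM}\N$ by the $\tau$'s and use $z_1\tau^*_{u,\ell}=(1-uy_1)\tau^*_{u,\ell}z_1$.

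If you take the commutator route instead, the fact that removes the extra $(1-uy_1^{CM})^{-1}$ is not the $z_1$--$\tau$ relation. It is $d_-^{CM}y_1^{CM}=y_1^{CM}d_-^{CM}$ on $V_{\ell+1}$, which moves that factor to the front of the $d_-^{CM}d_+^{CM}$ term. The scalars then match automatically, because $\M$ is antilinear and $z_1$ is by definition the $t\mapsto t^{-1}$ image of $y_1$. Antilinearity of $\N$ on all of $V_\ell$ is part of Carlsson--Mellit's theorem; it cannot be inferred from the restriction to $V_0$ as you do.

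What you have not established is the other assertion in the statement: that $\M$ restricted to $V_0$ equals $\Theta(u)\nabla'\overline{\omega}$. Your account of where $qtu$ comes from is also misplaced.
\begin{itemize}
\item In your computations no $\tau$ factor is ever moved past $\N$. So the four relations hold verbatim for $\tau^*_{u,\ell}\N(\tau^*_{v,\ell})^{-1}$ with any $v$. The choice $v=qtu$ is forced only by the $V_0$ identification.
\item A check at $\ell=0$ cannot test the $y_1$, $z_1$ normalizations, since no $y_1$ acts on $V_0$.
\item Conjugating a multiplication operator through $\N$ does not produce its $\overline{\omega}$-twist. On $V_0$ one has $\N\tau^*_u\N=\nabla'(\tau^*_{qtu})^{-1}(\nabla')^{-1}$, because $\nabla'$ does not commute with multiplication by $\underline{e}_n^*$. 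The twist comes from $\overline{\omega}$ alone.
\end{itemize}

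The missing argument is short. On $V_0$ we have $\tau^*_{u,0}=\tau^*_u$ and $\N=\nabla'\overline{\omega}$. Since $\overline{\omega}$ inverts $q,t$, it sends $M$ to $M/(qt)$. Together with $h_n[-qtX/M]=(-qt)^n e_n[X/M]$ and the inversion of the scalar $(qt)^n$, this gives $\overline{\omega}\,(\tau^*_{qtu})^{-1}=\tau^*_u\,\overline{\omega}$. Hence $\M|_{V_0}=\tau^*_u\nabla'\tau^*_u\overline{\omega}$. This equals $\Theta(u)\nabla'\overline{\omega}$ by the generating-function form $\Theta(u)\nabla'=\tau^*_u\nabla'\tau^*_u$ of \eqref{thetanabla}, i.e.\ \cite[Proposition 6.2]{proofdelta}.
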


Building on the relations established above, we obtain the following intertwining properties for the operator $\M^\Q := \Q_\ell^{-1} \M \Q_\ell$.

\begin{prop}\label{prenewM}
    We have the following identities acting on the space \((y_1 \dots y_\ell V_\ell)_{\ell \ge 0}\):
    \begin{equation}\label{eqprenewM}
        \M^\Q T_i  = T^{-1}_i \M^\Q, \quad \M^\Q d_- = d_- \M^\Q, \quad  \M^\Q d_+ = \frac{-y_1}{1-uy_1} d^{*}_+ \M^\Q.
    \end{equation}
\end{prop}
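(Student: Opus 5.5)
The plan is to conjugate each relation of Proposition~\ref{originalM} by $\Q_\ell$ and then use the dictionary \eqref{CMM} to replace the CM operators by ours. Write $\M^\Q = \Q_\ell^{-1}\M\Q_\ell$ on $y_1\cdots y_\ell V_\ell$, with the appropriate $\ell$ on each side. All compositions are legitimate because $(y_1\cdots y_\ell V_\ell)_{\ell\ge0}$ is an $\AA_{t,q}$-submodule. The one point needing care is that $\M$ is antilinear, so scalars such as $-1$ pass through it unchanged. The substitution $q\mapsto q^{-1}$, $t\mapsto t^{-1}$ affects only non-sign coefficients, and these occur only in the $T_i$ relation, where they are already accounted for by $(T_i^{CM})^{-1}$.

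\emph{The $T_i$ relation.} From $\M T_i^{CM}=(T_i^{CM})^{-1}\M$ and $T_i^{CM}=\Q_\ell T_i\Q_\ell^{-1}$ we get
\[
\Q_\ell^{-1}\M\Q_\ell\, T_i = \Q_\ell^{-1}(T_i^{CM})^{-1}\M\Q_\ell = T_i^{-1}\Q_\ell^{-1}\M\Q_\ell ,
\]
which is $\M^\Q T_i = T_i^{-1}\M^\Q$.

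\emph{The $d_-$ relation.} Here $d_-^{CM}=\Q_{\ell-1}(-d_-)\Q_\ell^{-1}$. Substituting into $\M d_-^{CM}=d_-^{CM}\M$ gives
\[
\M\Q_{\ell-1}(-d_-)\Q_\ell^{-1}=\Q_{\ell-1}(-d_-)\Q_\ell^{-1}\M .
\]
Multiply on the left by $\Q_{\ell-1}^{-1}$ and on the right by $\Q_\ell$. The signs cancel, since $-1$ commutes with the antilinear $\M$, and we obtain $\M^\Q d_- = d_-\M^\Q$.

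\emph{The $d_+$ relation.} This is the main step. We have $d_+^{CM}=\Q_{\ell+1}(-d_+)\Q_\ell^{-1}$, $d_+^{*CM}=\Q_{\ell+1}(y_1d_+^*)\Q_\ell^{-1}$, and $y_1^{CM}=\Q_{\ell+1}y_1\Q_{\ell+1}^{-1}$ on level $\ell+1$. Hence $(1-uy_1^{CM})^{-1}=\Q_{\ell+1}(1-uy_1)^{-1}\Q_{\ell+1}^{-1}$, where the inverse is read as a formal power series in $u$.

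The relation $\M d_+^{CM}=(1-uy_1^{CM})^{-1}d_+^{*CM}\M$ then becomes
\[
\M\Q_{\ell+1}(-d_+)\Q_\ell^{-1}=\Q_{\ell+1}(1-uy_1)^{-1}y_1d_+^*\Q_\ell^{-1}\M .
\]
Conjugating and moving the sign across $\M$ gives
\[
\M^\Q d_+ = -(1-uy_1)^{-1}y_1 d_+^*\M^\Q = \frac{-y_1}{1-uy_1}d_+^*\M^\Q ,
\]
because $y_1$ commutes with $(1-uy_1)^{-1}$.

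The main subtlety is that $\M$ is antilinear in $q,t$ and may also act on the formal variable $u$. I would check that the statement of Proposition~\ref{originalM} already incorporates this, so that $u$ and the sign behave as scalars under the conjugation. I would also verify that the target space stays inside $y_1\cdots y_{\ell+1}V_{\ell+1}$, so that $\Q_{\ell+1}^{-1}$ makes sense. The last relation of Proposition~\ref{originalM}, the one involving $z_1$, is not needed.
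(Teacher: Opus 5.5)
Your proposal is correct and takes the same route as the paper: you conjugate each relation of Proposition~\ref{originalM} by $\Q_\ell$ using the dictionary \eqref{CMM}, and the only thing you pass through the antilinear $\M$ is the sign $-1$. Your concern about $u$ does not arise: in the $d_+$ relation the factor $(1-uy_1)^{-1}$ stays to the left of $\M$ throughout, so it only has to be conjugated by the linear map $\Q_{\ell+1}$.
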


\begin{proof}
    The proof follows from direct computations. 

    For $T_i$, we have:
    \[
        \M^\Q T_i = \Q_\ell^{-1} \M \Q_\ell T_i = \Q_\ell^{-1} \M T_i^{CM} \Q_\ell = \Q_\ell^{-1} (T_i^{CM})^{-1} \M \Q_\ell = T_i^{-1} \M^\Q.
    \]

    For $d_-$, utilizing \eqref{CMM}, we deduce:
    \begin{align*}
        \M^\Q d_- &= \Q_{\ell-1}^{-1} \M \Q_{\ell-1} d_- = \Q_{\ell-1}^{-1} \M \Q_{\ell-1} (- \Q_{\ell-1}^{-1} d_-^{CM} \Q_\ell) \\[1ex]
        &= -\Q_{\ell-1}^{-1} \M d_-^{CM} \Q_\ell = -\Q_{\ell-1}^{-1} d_-^{CM} \M \Q_\ell \\[1ex]
        &= -\Q_{\ell-1}^{-1} d_-^{CM} \Q_\ell \Q_\ell^{-1} \M \Q_\ell = d_- \M^\Q.
    \end{align*}

    For $d_+$, we obtain:
    \begin{align*}
        \M^\Q d_+ &= \Q_{\ell+1}^{-1} \M \Q_{\ell+1} \Q_{\ell+1}^{-1} (-d^{CM}_+) \Q_\ell \\[1ex]
        &= \Q_{\ell+1}^{-1} \left( \frac{-1}{1-uy^{CM}_1} \right) d^{* CM}_+ \M \Q_\ell \\[1ex]
        &= \frac{-1}{1-uy_1} \Q_{\ell+1}^{-1} \big( \Q_{\ell+1} y_1 d_+^* \Q_\ell^{-1} \big) \M \Q_\ell \\[1ex]
        &= \frac{-y_1}{1-uy_1} d_+^* \M^\Q.\qedhere
    \end{align*}
\end{proof}

\subsection{Nonsymmetric plethysm and Weyl symmetrization}
\label{nonmac}
We recall the relevant notions useful for our purposes from the recent work of Blasiak et al.\ \cite{nsshuffle} and \cite[Section~7.8]{flaggedllt}.

The \emph{Demazure operator} $\pi_i$ is defined by
\begin{equation}\label{dema}
    \pi_i (f) := \cfrac{(1 - s_i)(x_i f)}{x_i - x_{i+1}}.
\end{equation}

Further, for any \(w \in \mathfrak{S}_N\), given a reduced expression \(s_{i_1}s_{i_2}\cdots s_{i_d}\) of $w$, 
we define \(\pi_w := \pi_{s_{i_1}} \cdots \pi_{s_{i_d}}\).
Moreover, since the operators $\pi_i$ satisfy the braid relations, 
the definition of $\pi_w$ is independent of the choice of reduced expression.

Therefore, we now define a stable version of Weyl symmetrization, 
\(\Weyl_{\ell} \colon \P(\ell) \to \P(\ell-1)\), by
\begin{equation}\label{defweyl}
    \Weyl_{\ell} f[X] := \lim_{N \to \infty} \pi_{x_\ell, x_{\ell+1},\dots,x_N} f[X_N],
\end{equation}
where $\pi_{x_{\ell}, \dots,x_N}$ is the operator $\pi_w$ 
with $w$ being the longest permutation in the subgroup \(\mathfrak{S}_{(1^{\ell-1}; N-\ell+1)} \subseteq \mathfrak{S}_N\), 
which consists of permutations that only act on the last \(N-\ell+1\) indices.
As noted by Blasiak et al.\ in \cite[Section~7.7]{flaggedllt}, the sequence on the right-hand side of \eqref{defweyl} converges strongly to a well-defined limit.

The modified Macdonald polynomial \(\Htild_\mu(X;q,t)\) differs from the Macdonald integral form \(J_\mu(X;q,t)\) only by a plethysm transformation:
\[
    f[X] \mapsto f\left[{X}/{(1-t)}\right].
\]
Blasiak et al.\ \cite{flaggedllt} extended this plethysm transformation to the space \(\P(\ell)\), given by:
\begin{equation}
    \Pisf_\ell(f(x_1, \dots, x_\ell) g[X]) := g\Big[\frac{X}{1-t}\Big] \pol \Big( \frac{f(x_1, \dots, x_\ell)}{\prod_{1 \le i < j \le \ell} (1-t \spa x_i/x_j)} \Big),
\end{equation}
where \(g\) is a symmetric function in the infinite variables \(X = x_1, x_2, \dots\). 
One can readily check that \(\Pisf_\ell\) is a \(\mathbb{Q}(q,t)\)-linear map. 

Moreover, \(\Pisf_\ell\) is invertible. 
This follows from the fact that both \(\flagh_\eta(X_\ell)h_\lambda[X]\) (introduced in \cite[Eq.~(57)]{nsshuffle}) and \(\flagh^\pm_\eta(X_\ell)h_\lambda[(1-t)X]\) (introduced in \cite[Eq.~(59)]{nsshuffle}) 
individually form a basis of \(\P(\ell)\). 
Here, \(\flagh_\eta(X_\ell) := \prod_{i=1}^\ell h_{\eta_i}(x_1, \dots, x_i)\) denotes the flagged homogeneous polynomial, and the pairs \((\eta|\lambda)\) range over \(\pairsr\).
Furthermore, Blasiak et al.\ established the following identity in \cite[Theorem 4.2.1]{nsshuffle}:
\begin{equation}
    \Pisf_\ell \big(\flagh^\pm_\eta(X_\ell)h_\lambda[(1-t)X]\big) = \flagh_\eta(X_\ell)h_\lambda[X].
\end{equation}

As constructed in \cite[Section~7.1]{nsshuffle}, the integral form stable \(\ell\)-nonsymmetric Macdonald polynomials \(\stE_{\eta|\lambda}\) also form a basis of \(\P(\ell)\). 
The modified \(\ell\)-nonsymmetric Macdonald polynomials \(\tE_{\eta|\lambda}\) (introduced in \cite[Section~7.2]{nsshuffle}) are defined by the action of \(\Pisf_\ell\) via \cite[Eq.~(134)]{nsshuffle}:
\begin{equation}\label{pimac}
    \tE_{\eta|\lambda} := \Pisf_\ell \stE_{\eta|\lambda}, \quad \text{or equivalently,} \quad
    \Pisf^{-1}_\ell \tE_{\eta|\lambda} = \stE_{\eta|\lambda}.
\end{equation}
Consequently, the elements \(\tE_{\eta|\lambda}\) individually form another basis of \(\P(\ell)\) as the pairs \((\eta|\lambda)\) range over \(\pairsr\).

We recall the following fundamental identity relating these modified \(\ell\)-nonsymmetric Macdonald polynomials 
and stable Weyl symmetrization, established by Blasiak et al.\ \cite[Proposition 7.2.3]{nsshuffle}:
\begin{equation}
    \Weyl_1 \cdots \Weyl_\ell\spa \tE_{\eta|\lambda}(X;q,t) = \omega\Htild_{(\eta; \lambda)_+}(X;q,t),
\end{equation}
where the notation \((\eta; \lambda)_+\) denotes the partition obtained by concatenating \(\eta\) and \(\lambda\), 
then rearranging the parts into weakly decreasing order.

Finally, we recall a lemma that will be highly useful in our proof.
\begin{lemma}[{\cite[Eq.~(140)]{nsshuffle}}]\label{useful}
    For any \(f \in \P(\ell)\), 
    \[
        \Weyl_1 \cdots \Weyl_\ell \Pisf_\ell f = \Pisf_0 \PP_0^{-1} d_-^\ell \PP_\ell f.
    \]
\end{lemma}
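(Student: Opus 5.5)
Since both sides are $\QQ(q,t)$-linear in $f$, my plan is to reduce the statement to a single-step identity and then iterate. Concretely, I would first prove that for every $\ell\ge 1$ and every $f\in\P(\ell)$,
\begin{equation*}
\Weyl_\ell \Pisf_\ell f = \Pisf_{\ell-1}\PP_{\ell-1}^{-1}\, d_-\, \PP_\ell f .
\end{equation*}
Granting this, the lemma follows by induction on $\ell$. Applying the induction hypothesis to $g:=\PP_{\ell-1}^{-1} d_- \PP_\ell f\in\P(\ell-1)$ gives
\begin{equation*}
\Weyl_1\cdots\Weyl_\ell\Pisf_\ell f=\Weyl_1\cdots\Weyl_{\ell-1}\Pisf_{\ell-1} g =\Pisf_0\PP_0^{-1}d_-^{\ell-1}\PP_{\ell-1} g,
\end{equation*}
and the inner factors $\PP_{\ell-1}\PP_{\ell-1}^{-1}$ cancel to leave $\Pisf_0\PP_0^{-1}d_-^{\ell}\PP_\ell f$. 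The base case $\ell=0$ is trivial.

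For the single-step identity I would test both sides on the spanning set $f=x_1^{\eta_1}\cdots x_\ell^{\eta_\ell}\, g[X]$, with $g$ symmetric, and package the dependence on $\eta_\ell$ into a generating series in a formal variable $z$.

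\emph{Right-hand side.} Using \eqref{isopr} and the definition of $d_-$, we have
\begin{equation*}
d_-\PP_\ell f = y_1^{\eta_1}\cdots y_{\ell-1}^{\eta_{\ell-1}}\,\langle y_\ell^{-\eta_\ell}\rangle\, g\Big[\tfrac{X}{t-1}-y_\ell\Big]\,\Omega[-y_\ell^{-1}X].
\end{equation*}
Pulling this back by $\PP_{\ell-1}^{-1}$ turns $X/(t-1)$ into the symmetric alphabet $x_\ell+x_{\ell+1}+\cdots$. Applying $\Pisf_{\ell-1}$ then replaces that alphabet by $X_{>\ell-1}/(1-t)$ and divides the polynomial part by $\prod_{i<j\le \ell-1}(1-tx_i/x_j)$ before taking $\pol$.

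\emph{Left-hand side.} First, $\Pisf_\ell f = g[X/(1-t)]\,\pol\big(x^\eta/\prod_{i<j\le\ell}(1-tx_i/x_j)\big)$. Next I would split off the factors $\prod_{i<\ell}(1-tx_i/x_\ell)^{-1}$, which are the only ones involving $x_\ell$, by expanding them as power series in $x_i/x_\ell$. Finally I would apply $\Weyl_\ell$, using the fact that the operator $\pi_{x_\ell,\dots,x_N}$ is linear over functions symmetric in $x_\ell,\dots,x_N$.

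The key computational input is the classical formula $\pi_{x_\ell,\dots,x_N}(x_\ell^m)=h_m(x_\ell,\dots,x_N)$ for $m\ge0$, together with the corresponding formula for negative powers. For negative powers the result vanishes or produces signed $e$'s; this is exactly what the $\pol$ truncation is tracking. In the stable limit these combine into the plethystic generating function
\begin{equation*}
\sum_m z^m\,\Weyl_\ell(x_\ell^m\cdots)\ \sim\ \Omega[z X_{\ge\ell}],
\end{equation*}
to be matched against $\Omega[-y_\ell^{-1}X]$ after the substitution $X\mapsto (t-1)X$ that is built into $\PP$ and $\Pisf$.

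The step I expect to be the main obstacle is the interaction between the $\pol$ truncation in $\Pisf_\ell$ and the stable Demazure symmetrization in $x_\ell$. Showing that the negative-degree terms in $x_\ell$ (coming from $(1-tx_i/x_\ell)^{-1}$) contribute exactly the terms that the constant-term extraction $\langle y_\ell^0\rangle$ keeps requires some care. I would handle it by first proving the identity when $g=1$ and $\eta$ is arbitrary, via the $z$-generating function above. I would then deduce the general case from the fact that both sides intertwine multiplication by $g[X/(1-t)]$, respectively the plethystic shift $g[X-(t-1)y_\ell]$, in a compatible way.

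If this direct computation becomes unwieldy, I would instead verify the single-step identity on the basis $\flagh^\pm_\eta(X_\ell)\,h_\lambda[(1-t)X]$, whose image under $\Pisf_\ell$ is the simpler $\flagh_\eta(X_\ell)h_\lambda[X]$. In that basis $\Weyl_\ell$ acts on the last flagged factor $h_{\eta_\ell}(x_1,\dots,x_\ell)$ by turning it into an $h$ in all remaining variables.
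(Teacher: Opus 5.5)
Your reduction to the one-step identity $\Weyl_\ell\Pisf_\ell f=\Pisf_{\ell-1}\PP_{\ell-1}^{-1}d_-\PP_\ell f$, followed by induction on $\ell$, is sound. The paper gives no argument of its own here; it just quotes \cite[Eq.~(140)]{nsshuffle}. The gap is that the one-step identity carries all the content, and your main route to it would fail, for two reasons.

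\textbf{First problem: how $\Pisf_{\ell-1}$ acts.} $\Pisf_{\ell-1}$ is defined on products $f(x_1,\dots,x_{\ell-1})g[X]$ with $g$ in the \emph{full} alphabet. So it does not act on $g[x_\ell+x_{\ell+1}+\cdots]$ by substituting $(x_\ell+x_{\ell+1}+\cdots)/(1-t)$. You must rewrite the input as $g[X-x_1-\cdots-x_{\ell-1}]$ and push the resulting polynomials in $x_1,\dots,x_{\ell-1}$ through $\pol$. Already for $\ell=2$, $f=x_2$, the right side is $\Pisf_1\bigl((1-t)(X-x_1)\bigr)=X-(1-t)x_1$, which does equal $\Weyl_2\Pisf_2(x_2)=\Weyl_2(x_2+tx_1)$; your rule would give $X-x_1$. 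Your two sides also treat $g$ inconsistently. $\PP_\ell f=y^\eta g[X/(t-1)]$ presumes $g$ is in $x_{\ell+1},x_{\ell+2},\dots$, while $\Pisf_\ell f=g[X/(1-t)]\pol(\cdots)$ presumes the full alphabet. The reduction to $g=1$ is valid only in the latter form.

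\textbf{Second problem: termwise truncation cannot work.} Splitting off $\prod_{i<\ell}(1-tx_i/x_\ell)^{-1}$ and truncating term by term fails, because $\pol$ does not commute with multiplication by symmetric polynomials in $x_1,\dots,x_{\ell-1}$. Read $\pol$ as naive truncation of the expansion and take $\ell=3$. Then $\Pisf_3(x_3)=x_3+tx_2+(t+t^2)x_1$, so
\[
\Weyl_3\Pisf_3(x_3)=X+(t^2+t-1)x_1+(t-1)x_2 .
\]
On the other hand,
\[
\Pisf_2\PP_2^{-1}d_-\PP_3(x_3)=\Pisf_2\bigl((1-t)(X-x_1-x_2)\bigr)=X+(t^2-1)x_1+(t-1)x_2 .
\]
After full symmetrization these become $(t+t^2)p_1$ versus $t^2p_1$, so even the lemma itself fails under that reading. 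Similarly, naive truncation puts $(t^2+t^3)x_1^3$ into $\Pisf_3(x_1x_2x_3)$. But Theorem~\ref{5.4.1} forces $\Pisf_3(x_1x_2x_3)=\Grow_3(\hat\pi,\emptyset)$ for the all-east path $\hat\pi\in\mathrm{D}_3(3)$, and its $x_1^3$-coefficient is $t^3$. So no careful bookkeeping of a termwise $z$-generating function will close the gap. Your heuristic about negative powers of $x_\ell$ is also beside the point: after $\pol$, the operator $\Weyl_\ell$ is only ever applied to polynomials.

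\textbf{The fix is your fallback, which should be the proof.} Write $X_i=x_1+\cdots+x_i$ and $Y_i=y_1+\cdots+y_i$. Use the signed flagged homogeneous polynomials $\flagh^\pm_\eta(X_\ell)=\prod_i h_{\eta_i}[X_i-tX_{i-1}]$, which satisfy $\Pisf_\ell\bigl(\flagh^\pm_\eta h_\lambda[(1-t)X]\bigr)=\flagh_\eta h_\lambda[X]$.
\begin{itemize}
\item $\PP_\ell$ is a ring isomorphism sending $X_i\mapsto Y_i$ and $X\mapsto Y_\ell+X/(t-1)$.
\item The shift $X\mapsto X-(t-1)y_\ell$ inside $d_-$ turns $h_\lambda[(1-t)Y_\ell-X]$ into $h_\lambda[(1-t)Y_{\ell-1}-X]$.
\item Hence $y_\ell$ occurs only in $h_{\eta_\ell}[y_\ell+(1-t)Y_{\ell-1}]$, and $\langle y_\ell^0\rangle h_m[y_\ell+A]\,\Omega[-y_\ell^{-1}X]=h_m[A-X]$.
\end{itemize}
Pulling back by $\PP_{\ell-1}^{-1}$ gives, with $\eta'=(\eta_1,\dots,\eta_{\ell-1})$,
\[
\PP_{\ell-1}^{-1}d_-\PP_\ell\bigl(\flagh^\pm_\eta h_\lambda[(1-t)X]\bigr)=\flagh^\pm_{\eta'}(X_{\ell-1})\,h_{\eta_\ell}[(1-t)X]\,h_\lambda[(1-t)X].
\]
Its image under $\Pisf_{\ell-1}$ is $\flagh_{\eta'}h_{\eta_\ell}[X]h_\lambda[X]$. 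On the other side,
\[
\Weyl_\ell(\flagh_\eta h_\lambda[X])=\flagh_{\eta'}h_\lambda[X]\,\Weyl_\ell h_{\eta_\ell}(x_1,\dots,x_\ell)=\flagh_{\eta'}h_\lambda[X]h_{\eta_\ell}[X],
\]
because $\Weyl_\ell x_\ell^j=h_j[x_\ell+x_{\ell+1}+\cdots]$ and $\sum_j h_{m-j}(x_1,\dots,x_{\ell-1})h_j[x_\ell+\cdots]=h_m[X]$. Note that the last factor becomes $h_{\eta_\ell}$ of the full alphabet $X$, not of the remaining variables only. This proves the one-step identity on a basis of $\P(\ell)$, and your induction then yields the lemma.
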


\subsection{Flagged LLT polynomials}\label{flagged}
Flagged LLT polynomials were first introduced by Blasiak et al.\ in \cite{flaggedllt}, 
and the flagged row and column LLT polynomials we work with here, introduced in \cite{nsshuffle}, are simply a special case of this general construction. 
For the explicit combinatorial reformulation of these variants in terms of the general model from \cite{flaggedllt}, 
we refer the reader to \cite[Section~5.3]{nsshuffle}.

The concepts and properties referred to in this subsection are given in \cite[Section~5]{nsshuffle}.
These two flagged LLT polynomials are linked to partial Dyck paths and their corresponding markings. 
Notably, they retain all the core combinatorial properties of ordinary LLT polynomials, 
while adding flagged constraints that bound the allowed values of fillings; 
this extra structure makes them a powerful tool for characterizing nonsymmetric objects in algebraic combinatorics.

\subsubsection{Flagged row LLT polynomial}
\label{flaggedrowllt}
We first recall the fundamental notions of partial Dyck paths and their markings, which underpin the definition of flagged row LLT polynomials. 
Let \(\ell \ge 0\) and \(n > 0\) be integers. We write \(\mathrm{D}_\ell(n)\) for the set of \emph{\(\ell\)-partial Dyck paths}, defined to be lattice paths from \((0,\ell)\) to \((n,n)\) that stay weakly above the diagonal \(y=x\). 
A \textit{marking} of \(\hat{\pi} \in \mathrm{D}_\ell(n)\) is a subset \(\Sigma\) of the outer corner boxes of \(\hat\pi\) — 
that is, the boxes above \(\hat\pi\) formed by an east step followed by a north step. We set \(\Area(\hat\pi) := \Area(\mathsf{N}^\ell \hat\pi)\).

Let \(\mathcal{A}\) be the ``super" alphabet with total order \(1 < \bar{1} < 2 < \bar{2} < \dots\), consisting of \emph{positive letters} \(v\) and \emph{negative letters} \(\bar{v}\) for each \(v \in \ZZ_+\). Let \(|\bar{v}| := v\) for a negative letter \(\bar{v}\) and \(|v| := v\) for a positive letter \(v\).

\begin{defn}[{\cite[Section~5.2]{nsshuffle}}]
A \textit{flagged \((\hat\pi, \Sigma)\)-row super word} is a map \(T \colon [n] \to \mathcal{A}\) satisfying the following conditions:
\begin{enumerate}
    \item For every \((i,j) \in \Sigma\), the pair \(T(i), T(j)\) satisfies \(T(j) < T(i)\) or 
    \(T(i), T(j)\) are equal positive numbers.
    \item The flagged condition: \(T(1) \le 1, T(2) \le 2, \dots, T(\ell) \le \ell\).
\end{enumerate}
\end{defn}
We write \(\FSSYT^\pm(\hat\pi, \Sigma)\) for the set of all such super words. 
Restricting \(T\) to positive entries recovers the set of \textit{flagged \((\hat\pi, \Sigma)\)-row semistandard words}, which we denote by \(\FSSYT(\hat\pi, \Sigma)\).

For \(T \in \FSSYT^\pm(\hat\pi, \Sigma)\), we define two key statistics: \(\inv(T)\), 
the number of attacking inversions (i.e., pairs \((i,j) \in \Area(\hat\pi)\) such that \(T(j) < T(i)\) or 
\(T(i), T(j)\) are equal positive numbers), 
and \(m(T)\), the number of negative letters in \(T\). The associated \textit{signed flagged row LLT polynomial} is then defined by:
\[
    \Growpm_\ell(\hat\pi, \Sigma)(X; t) := \sum_{T \in \FSSYT^\pm(\hat\pi, \Sigma)} t^{\inv(T)} (-t)^{m(T)} \xx^{|T|},
\]
where \(\xx^{|T|} := \prod_{i \in [n]} x_{|T(i)|}\). 




Signed flagged row LLT polynomials can be constructed via successive applications of the operators \(d^\PP_-\) and \(d^\PP_+\) 
from \(\AA_{t,q}\). 
We first introduce the associated operator \(\chi_\ell^\PP(\hat\pi, \Sigma)\), then establish its connection to signed flagged row LLT polynomials.

\begin{defn}[{\cite[Definition 5.5.1]{nsshuffle}}]\label{d-d+p}
Let \(\hat\pi \in \mathrm{D}_\ell(n)\) be a partial Dyck path with marking \(\Sigma\). 
The operator \(\chi_\ell^\PP(\hat\pi, \Sigma)\) is defined by acting on the constant polynomial \(1 \in \P(0)\)
with the following sequence of operators, obtained by traversing \(\hat\pi\) and \(\Sigma\) from the northeast endpoint:
\begin{itemize}
    \item For each marked corner in \(\Sigma\), we apply the operator \(\frac{1}{t-1}[d_-^\PP, -d_+^\PP]\);
    \item For each vertical step of \(\hat\pi\) that is not part of a marked corner, we apply \(d_-^\PP\);
    \item For each horizontal step of \(\hat\pi\) that is not part of a marked corner, we apply \(-d_+^\PP\).
\end{itemize}
\end{defn}

\begin{thm}[{\cite[Theorem 5.5.4]{nsshuffle}}]\label{5.5.4}
For any partial Dyck path \(\hat\pi \in \mathrm{D}_\ell(n)\) and marking \(\Sigma\), the operator \(\chi_\ell^\PP(\hat\pi, \Sigma)\) equals the signed flagged row LLT polynomial:
\[
    \Grow_\ell^\pm(\hat\pi, \Sigma) = \chi_\ell^\PP(\hat\pi, \Sigma).
\]
\end{thm}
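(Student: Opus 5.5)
This is Theorem 5.5.4 of \cite{nsshuffle}, restated; the plan is to prove it by induction on the length of the operator word, matching each step of $\chi^\PP_\ell$ with a combinatorial recursion for $\Growpm_\ell$. Read $\hat\pi$ and $\Sigma$ from the northeast endpoint. Then $\chi_\ell^\PP(\hat\pi,\Sigma)$ is built by successively applying $d_-^\PP$, $-d_+^\PP$, or $\frac{1}{t-1}[d_-^\PP,-d_+^\PP]$ to $1\in\P(0)$. At each intermediate stage we have a \emph{partially} read path: a partial Dyck path between an intermediate point $(a,b)$ and $(n,n)$, with $b-a$ equal to the current index $\ell'$ of $\P(\ell')$. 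So I would first strengthen the statement to all such intermediate stages. For each suffix of the word, the element of $\P(\ell')$ obtained should equal a signed flagged LLT-type generating function over super words on the rows read so far. In it, the $\ell'$ ``open'' rows (north steps whose attacking partners have not all been read) carry the flagged variables $x_1,\dots,x_{\ell'}$ in a prescribed order, and the remaining rows carry symmetric variables. Transporting through $\PP_{\ell'}$, this is a statement in $V_{\ell'}$ that can be checked using the explicit formulas \eqref{defn1}.

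The inductive step splits into three cases. For a vertical step, $d_-$ closes the top open row: the coefficient extraction $\langle y_{\ell'}^0\rangle$ together with the plethystic shift $X-(t-1)y_{\ell'}$ should turn the variable $y_{\ell'}$ into a symmetric letter. Under $\PP$, the alphabet $X/(t-1)$ becomes $x_{\ell'+1}+\cdots$ and the signed (super) letters appear. The factor $\Omega[-y^{-1}X]$ produces exactly the inversions with the symmetric rows. For a horizontal step, $-d_+$ opens a new row. The multiplication by $y_{\ell'+1}$ and the shift $X+(t-1)y_{\ell'+1}$ create a new flagged variable. The product $T_1\cdots T_{\ell'}$ then moves it to position $1$, and by the standard Demazure--Lusztig computation each $T_i$ accounts for the attack relations, i.e.\ the $t^{\inv}$ weights, between the new row and the existing open rows. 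The flag $T(i)\le i$ arises because the Demazure-type operators $\pi_i$ implicit in $T_i$ produce exactly the truncated alphabets $x_1,\dots,x_i$. For a marked corner, the commutator $\frac1{t-1}[d_-,-d_+]$ acts, up to the $T$'s, as multiplication by $y_1$ in $V$ (compare the recursive definition of $y_1$). On the combinatorial side, a marking forces the strict or equal-positive relation $T(j)<T(i)$ between the two rows of the corner, which one implements by inclusion--exclusion on the unmarked version.

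The final stage has $\ell'=\ell$ and all rows read, and the strengthened statement specializes to $\Growpm_\ell(\hat\pi,\Sigma)=\chi^\PP_\ell(\hat\pi,\Sigma)$. The main obstacle is the horizontal-step case: we must show that $T_1\cdots T_{\ell'}$ applied to the shifted polynomial reproduces precisely the flagged super word sum with the correct inversion count. I would first attempt this on the basis $\flagh^\pm_\eta(X_{\ell'})h_\lambda[(1-t)X]$, using the $\Pisf_\ell$ relation from \cite[Theorem 4.2.1]{nsshuffle} to reduce to an identity for flagged homogeneous polynomials under Demazure operators. Failing that, I would use the general flagged LLT column-insertion recursion of \cite{flaggedllt}.
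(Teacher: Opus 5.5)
The paper does not prove this statement; it quotes it from \cite{nsshuffle}. Your outline therefore has to stand on its own, and as written it does not. Once made precise, it reduces the theorem to two one-step identities, but it proves neither of them.

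The reduction itself is sound. No strengthening is needed: the element obtained after reading down to a lattice point $(a,b)$ (not inside a marked corner) is $\chi^\PP_{b-a}$ of the translated subpath, which lies in $\mathrm{D}_{b-a}(n-a)$. So the theorem itself serves as the inductive hypothesis.

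The marked-corner case also follows by inclusion--exclusion, as you say. The marking condition on $(i,j)$ is exactly the inversion condition for the box $(i,j)$. Hence $\frac{1}{t-1}\bigl(d_-^\PP(-d_+^\PP)-(-d_+^\PP)d_-^\PP\bigr)$ matches $\frac{1}{t-1}$ times the difference between the path with the corner box added to the area and the unmarked path.

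What remains is the whole content of the theorem. Write $\mathsf{N}\hat\pi$ for a north step from $(0,\ell)$ followed by $\hat\pi$, and $\mathsf{E}\hat\pi$ for an east step from $(0,\ell+1)$ followed by $\hat\pi$ translated by $(1,1)$, with $\Sigma$ transported. For $\hat\pi\in\mathrm{D}_{\ell+1}(n)$ and $\hat\pi\in\mathrm{D}_\ell(n)$ respectively, you need
\[
d_-^\PP\,\Growpm_{\ell+1}(\hat\pi,\Sigma)=\Growpm_\ell(\mathsf{N}\hat\pi,\Sigma),
\qquad
-d_+^\PP\,\Growpm_\ell(\hat\pi,\Sigma)=\Growpm_{\ell+1}(\mathsf{E}\hat\pi,\Sigma).
\]

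Your heuristics for these partly misdescribe what must be shown.
\begin{itemize}
\item Since $\Area(\mathsf{N}^{\ell}\mathsf{N}\hat\pi)=\Area(\mathsf{N}^{\ell+1}\hat\pi)$, a vertical step creates no attacking pairs. So $\Omega[-y^{-1}X]$ is not ``producing inversions''. The first identity says exactly that $d_-^\PP$ deletes the flag $T(\ell+1)\le \ell+1$ and changes nothing else.
\item $\Growpm_{\ell'}$ is not a sum in which open rows carry $x_1,\dots,x_{\ell'}$ and closed rows carry symmetric variables. A flagged entry $T(i)$ ranges over all letters $\le i$, positive and negative. Unflagged entries range over the whole super alphabet, including the letters $\le \ell'$.
\end{itemize}

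For the horizontal step, \eqref{defn1} gives $-d_+^\PP f=T_1\cdots T_\ell\,(x_{\ell+1}f)$, where $T_i$ is the Demazure--Lusztig operator in $x_i,x_{i+1}$. On the combinatorial side, three things happen at once:
\begin{itemize}
\item the new entry is forced to equal $1$;
\item it attacks every previously flagged entry;
\item every old flag $i$ is relaxed to $i+1$.
\end{itemize}
So the flags are not produced by the $T_i$; they are already in the input. The $T_i$ must shift them and insert the new attacking pairs one at a time.

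Proving this means defining the intermediate generating functions after $T_j\cdots T_\ell$ has been applied, and proving a local lemma for each $T_i$. This is not a standard computation you can cite. Your fallbacks do not supply it either:
\begin{itemize}
\item The identity $\Pisf_\ell(\flagh^\pm_\eta h_\lambda[(1-t)X])=\flagh_\eta h_\lambda[X]$ concerns $\Pisf_\ell$, not $d_+^\PP$.
\item You have no expansion of $\Growpm_\ell$ in the basis $\flagh^\pm_\eta(X_\ell)h_\lambda[(1-t)X]$, so that basis gives no access to the combinatorial side.
\item The ``column-insertion recursion'' is not specified.
\end{itemize}
As it stands, the proposal is a plan whose two essential lemmas are missing.
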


If we replace the operators \(-d_+^\PP\) and \(d_-^\PP\) in Definition \ref{d-d+p} with \(d_+\) and \(d_-\), respectively, 
we obtain the element \(\chi_\ell(\hat\pi, \Sigma) \in V_\ell\). 
Thus, by Theorem \ref{5.5.4}, it follows readily that
\begin{equation}\label{e Gcal vs chiPP}
    \Growpm_\ell(\hat{\pi}, \Sigma) = \chi^\PP_\ell(\hat{\pi}, \Sigma) = (-1)^{n} \PP_\ell^{-1} \chi_\ell(\hat{\pi}, \Sigma).
\end{equation}
One can then check that \(\chi(\pi, \Sigma) := d_-^\ell \chi_\ell(\hat{\pi}, \Sigma)\) recovers the symmetric LLT polynomial from \cite[Section~3.5]{mellit}, where we use the notation \(\pi = \mathsf{N}^\ell \hat{\pi}\).

\subsubsection{Flagged column LLT polynomials}
\begin{defn}[{\cite[Section~5.6]{nsshuffle}}]
A \textit{flagged \((\hat\pi, \Sigma)\)-column super word} is a map \(T \colon [n] \to \mathcal{A}\) satisfying the following conditions:
\begin{enumerate}
    \item For every \((i,j) \in \Sigma\), the pair \(T(j), T(i)\) satisfies 
    \(T(j) > T(i)\) or \(T(j), T(i)\) are equal negative numbers;
    \item The flagged condition: \(T(1) \le 1, T(2) \le 2, \dots, T(\ell) \le \ell\).
\end{enumerate}
\end{defn}

We write \(\widetilde{\FSSYT}^\pm(\hat\pi, \Sigma)\) for the set of all such super words, 
and \(\widetilde{\FSSYT}(\hat\pi, \Sigma)\) for the set of positive-valued counterparts.

For \(T \in \widetilde{\FSSYT}^\pm(\hat\pi, \Sigma)\), we define \(\widetilde{\inv}(T)\) as the number of pairs \((i,j) \in \Area(\hat\pi)\) 
such that \(T(j), T(i)\) satisfy \(T(j) > T(i)\) or \(T(j), T(i)\) are equal negative numbers.
The associated \textit{signed flagged column LLT polynomial} is then defined by:
\[
    \Gcolpm_\ell(\hat\pi, \Sigma)(X; t) := \sum_{T \in \widetilde{\FSSYT}^\pm(\hat\pi, \Sigma)} t^{\widetilde{\inv}(T)} (-t)^{-m(T)} \xx^{|T|},
\]
and the \textit{flagged column LLT polynomial} is defined by:
\[
    \Gcol_\ell(\hat\pi, \Sigma)(X; t) := \sum_{T \in \widetilde{\FSSYT}(\hat\pi, \Sigma)} t^{\widetilde{\inv}(T)} \xx^{T}.
\]

Similar to the row case, signed flagged column LLT polynomials can be constructed via successive applications of \(d^\PP_-\) and \(d^\PP_+\). 

\begin{defn}[{\cite[Definition 5.6.3]{nsshuffle}}]
Let \(\hat\pi \in \mathrm{D}_\ell(n)\) be a partial Dyck path with marking \(\Sigma\). 
The operator \(\widetilde{\chi}_\ell^\PP(\hat\pi, \Sigma)\) is defined by acting on the constant polynomial \(1 \in \P(0)\) 
with the following sequence of operators, obtained by traversing \(\hat\pi\) and \(\Sigma\) from the northeast endpoint:
\begin{itemize}
    \item For each marked corner in \(\Sigma\), we apply \(\frac{1}{t^{-1}-1}\bigl(d_-^\PP(-t^{-r}d_+^\PP) - (-t^{-r+1}d_+^\PP) d_-^\PP\bigr)\);
    \item For each vertical step of \(\hat\pi\) that is not part of a marked corner, we apply \(d_-^\PP\);
    \item For each horizontal step of \(\hat\pi\) that is not part of a marked corner, we apply \(-t^{-r}d_+^\PP\).
\end{itemize}
Here \(r = h - v\), with \(h\) and \(v\) being the number of horizontal and vertical steps traversed so far.
\end{defn}

\begin{thm}[{\cite[Theorem 5.6.4]{nsshuffle}}]
For any partial Dyck path \(\hat\pi \in \mathrm{D}_\ell(n)\) and marking \(\Sigma\), 
the operator \(\widetilde{\chi}_\ell^\PP(\hat\pi, \Sigma)\) equals the signed flagged column LLT polynomial:
\[
    \Gcolpm_\ell(\hat\pi, \Sigma)(X; t^{-1}) = \widetilde{\chi}_\ell^\PP(\hat\pi, \Sigma)(X; t).
\]
\end{thm}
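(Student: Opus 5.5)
The plan is to deduce the column statement from the row version, Theorem~\ref{5.5.4}, rather than redoing the inductive operator analysis of \cite[Section~5.5]{nsshuffle}. The bridge is a complementarity on super words. For letters $a=T(i)$ and $b=T(j)$ of $\mathcal{A}$, exactly one of the following holds:
\begin{itemize}
\item the row condition: $b<a$, or $a=b$ are equal positive letters;
\item the column condition: $b>a$, or $a=b$ are equal negative letters.
\end{itemize}
Hence, for a pair $(i,j)\in\Area(\hat\pi)$, the column indicator is $1$ minus the row indicator. The flagged condition $T(i)\le i$ for $i\le \ell$ is the same in both definitions, so both polynomials are sums over the same set of flagged maps $T\colon[n]\to\mathcal{A}$, subject only to different corner constraints.

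First step: expand the combinatorial side. Write the constraint at each marked corner $c\in\Sigma$ as $1-[\text{row condition at }c]$ and expand the product by inclusion--exclusion over subsets $S\subseteq\Sigma$. Each resulting term is a sum over maps satisfying the row condition on $S$ and unconstrained on $\Sigma\setminus S$. For $c\in\Sigma\setminus S$, drop the marking, so that $c$ becomes an outer corner switched to an inner corner. In the statistic, $\widetilde{\inv}(T)=|\Area(\hat\pi)|-\inv(T)$, with $\Area$ computed for the appropriate path. Substituting $t\mapsto t^{-1}$, the weight $t^{-\widetilde{\inv}(T)}(-t^{-1})^{-m(T)}$ becomes $t^{-|\Area|}\,t^{\inv(T)}(-t)^{m(T)}$, which is exactly the row weight. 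This gives an identity
\[
\Gcolpm_\ell(\hat\pi,\Sigma)(X;t^{-1})=\sum_{S\subseteq\Sigma}(-1)^{|S|}\,t^{-a(\hat\pi,S)}\,\Growpm_\ell(\hat\pi_S,S)(X;t),
\]
where $\hat\pi_S$ is $\hat\pi$ with the corners in $\Sigma\setminus S$ flipped, and $a(\hat\pi,S)$ is an explicit area count. Theorem~\ref{5.5.4} then turns each summand into $\chi^\PP_\ell(\hat\pi_S,S)$.

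Second step: expand the operator side the same way. Each marked-corner operator in $\widetilde{\chi}^\PP_\ell$ is
\[
\tfrac{1}{t^{-1}-1}\bigl(d_-^\PP(-t^{-r}d_+^\PP)-(-t^{-r+1}d_+^\PP)d_-^\PP\bigr).
\]
Rewrite it as $t^{-r}\bigl[-d_+^\PP d_-^\PP\bigr]$ (the flipped, unmarked corner, traversed in the opposite order) plus a scalar multiple of the row commutator $\tfrac{1}{t-1}[d_-^\PP,-d_+^\PP]$. This produces the same $2^{|\Sigma|}$-term expansion, with each term a row operator word $\chi^\PP_\ell(\hat\pi_S,S)$ multiplied by a product of factors $t^{-r}$, one for each horizontal step and each corner. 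The coefficient of the commutator is $-t^{-r+1}$, which accounts for the sign $(-1)^{|S|}$.

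Main obstacle: the bookkeeping showing that $\sum t^{-r}$ over the horizontal steps, together with the corner contributions, equals $t^{-a(\hat\pi,S)}$. At the horizontal step with $h$ horizontal and $v$ vertical steps already traversed, $r=h-v$ should count the attacking pairs that the new row forms with the rows above it in $\Area(\mathsf N^\ell\hat\pi_S)$. I would check this first for a single row by induction from the northeast endpoint. I would then track how flipping one corner changes both $r$ and the area by exactly one, which explains the extra $t$ in $-t^{-r+1}$. If the exponents match term by term, comparing the two expansions finishes the proof.
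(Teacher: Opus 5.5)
The paper does not prove this statement; it quotes it from \cite{nsshuffle}, so your argument has to stand on its own. Your route is sound: deduce the column case from Theorem~\ref{5.5.4} using the complementarity of the row and column conditions. However, both of your expansions are wrong as written, so the term-by-term comparison would not close.

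Write $[\mathrm{row}_c]$ and $[\mathrm{col}_c]=1-[\mathrm{row}_c]$ for the indicators at a corner $c$. Expanding $\prod_{c\in\Sigma}(1-[\mathrm{row}_c])$ flips nothing. For $c\in\Sigma\setminus S$ the corner is simply unmarked and stays outside $\Area(\hat\pi)$. Putting it into the area would multiply the weight of each $T$ by $t^{[\mathrm{row}_c]}$, which gives a different polynomial. What your inclusion--exclusion actually yields is
\[
\Gcolpm_\ell(\hat\pi,\Sigma)(X;t^{-1})=t^{-|\Area(\hat\pi)|}\sum_{S\subseteq\Sigma}(-1)^{|S|}\,\Growpm_\ell(\hat\pi,S)(X;t).
\]
Your flipped-path identity is in fact also true, with $a(\hat\pi,S)=|\Area(\hat\pi)|-|S|$. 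But it comes from the different decomposition $[\mathrm{col}_c]=t^{[\mathrm{row}_c]}-t\,[\mathrm{row}_c]$, not from the one you describe.

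On the operator side, $-d_+^\PP d_-^\PP$ applies $d_-^\PP$ first, so it is the original unflipped corner, not the flipped one, and your coefficients are off. Put $A=d_-^\PP(-d_+^\PP)$ (flipped), $B=(-d_+^\PP)d_-^\PP$ (unflipped) and $R=\frac{1}{t-1}(A-B)$. Then
\[
\tfrac{1}{t^{-1}-1}\bigl(t^{-r}A-t^{-r+1}B\bigr)=t^{-r+1}(B-R)=t^{-r+1}A-t^{-r+2}R,
\]
whereas you wrote $t^{-r}(\cdot)-t^{-r+1}R$. Already for $\ell=0$ and $\hat\pi=\mathsf{NENE}$ with its corner marked ($r=1$), your version gives a wrong answer under either reading of the first term; under the flipped reading it is off by exactly $t^{-1}$.

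With the middle form, the ``main obstacle'' disappears. At an east step from $(x-1,y)$ to $(x,y)$ one has $r=(n-x)-(n-y)=y-x$, which is the number of cells of $\Area(\hat\pi)$ in that column. The factor $t^{-r+1}$ is exactly the weight the corner's own east step would receive, since its $r$-value is $r-1$ once the corner's north step has been traversed. Hence
\[
\widetilde{\chi}^\PP_\ell(\hat\pi,\Sigma)=t^{-|\Area(\hat\pi)|}\sum_{S\subseteq\Sigma}(-1)^{|S|}\chi^\PP_\ell(\hat\pi,S),
\]
and Theorem~\ref{5.5.4} matches this with the combinatorial display term by term, with no flipping and no further bookkeeping. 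If you prefer flipped paths, pair the last form with $[\mathrm{col}_c]=t^{[\mathrm{row}_c]}-t\,[\mathrm{row}_c]$; both sides then carry $t^{-|\Area(\hat\pi)|+|S|}$.
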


\subsubsection{Symmetrization}
We now note that signed and unsigned flagged LLT polynomials are related via the \(\ell\)-nonsymmetric plethysm map.
\begin{thm}[{\cite[Theorem 5.4.1 and Eq.~(94)]{nsshuffle}}]\label{5.4.1}
The \(\ell\)-nonsymmetric plethysm map sends signed flagged LLT polynomials to flagged LLT polynomials. 
Precisely, for any \(\hat\pi \in \mathrm{D}_\ell(n)\) and marking \(\Sigma\) of \(\hat\pi\), we have
\begin{align}
    \Pisf_\ell \big( \Growpm_\ell(\hat\pi, \Sigma) \big) &= \Grow_\ell(\hat\pi, \Sigma), \\
    \label{eq5.4.1} \Pisf_\ell \big( \Gcolpm_\ell(\hat\pi, \Sigma)(X;t^{-1}) \big) &= \Gcol_\ell(\hat\pi, \Sigma)(X;t^{-1}).
\end{align}
\end{thm}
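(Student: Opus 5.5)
The plan is to reduce both identities to the basis statement $\Pisf_\ell\big(\flagh^\pm_\eta(X_\ell)h_\lambda[(1-t)X]\big) = \flagh_\eta(X_\ell)h_\lambda[X]$ recalled above. It suffices to show two things. First, the unsigned flagged row LLT polynomial has an expansion
\[
\Grow_\ell(\hat\pi,\Sigma)=\sum_{(\eta|\lambda)} c_{\eta|\lambda}(t)\,\flagh_\eta(X_\ell)h_\lambda[X].
\]
Second, the signed polynomial $\Growpm_\ell(\hat\pi,\Sigma)$ has the expansion with the \emph{same} coefficients in the basis $\flagh^\pm_\eta(X_\ell)h_\lambda[(1-t)X]$. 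Linearity of $\Pisf_\ell$ then gives the first identity. The column identity \eqref{eq5.4.1} will follow by the same argument after replacing the row attack/marking conventions by the column ones and $t$ by $t^{-1}$.

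\textbf{Step 1: the symmetric tail.} I would handle the variables $x_{\ell+1},x_{\ell+2},\dots$ by the usual superization argument for LLT polynomials. Group super words according to the set of positions whose letter lies in $\{\ell+1,\overline{\ell+1},\ell+2,\dots\}$. On that set the conditions imposed by $\Sigma$ and the counting of $\inv$ are exactly the ordinary super LLT rules, because the flag constraints only involve positions $1,\dots,\ell$. The standard fact is then that summing over super fillings with weight $(-t)^{m(T)}$ produces the plethystic substitution $X\mapsto X-tX=(1-t)X$ in the unflagged alphabet. This is the same mechanism that gives $G[X-tY]$ for the super LLT polynomial, specialised at $Y=X$. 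It accounts precisely for the factor $h_\lambda[(1-t)X]$ versus $h_\lambda[X]$ in the two bases.

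\textbf{Step 2: the flagged letters.} For letters of size $\le \ell$, the flag condition $T(i)\le i$ rules out plain symmetry. Instead, I would expand both polynomials by a Gessel-type standardisation. Fix the relative order pattern (a standardised word together with its descent/attack data on $\Area(\hat\pi)$ and $\Sigma$), and sum over all flagged fillings compatible with that pattern.

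In the unsigned case, each such sum is a flagged fundamental-type polynomial, and it can be rewritten in the $\flagh_\eta$ basis. In the signed case, the same pattern sums over positive and negative letters. Equal positive letters are allowed across attacking or marked pairs, while equal negative letters are forbidden, which is the standard super rule. This should produce the corresponding signed flagged polynomial, which is the building block of $\flagh^\pm_\eta$, with the same $t$-power coming from $\inv$. Matching these two pattern-by-pattern expansions gives equality of the coefficients $c_{\eta|\lambda}(t)$.

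\textbf{Main obstacle and fallback.} I expect the main obstacle to be Step 2. One has to verify that the definition of $\flagh^\pm_\eta$ from \cite[Eq.~(59)]{nsshuffle} interacts with the flag bounds exactly as the signed words do. The delicate point is the factor $\prod_{i<j}(1-t\,x_i/x_j)^{-1}$ with the polynomial truncation in $\Pisf_\ell$, which must match the sign-weighted overcounting when a negative letter sits at its flag bound. If this direct matching becomes unwieldy, I would fall back on the operator description instead. By Theorem~\ref{5.5.4}, $\Growpm_\ell=\chi^\PP_\ell$, so I would compute $\Pisf_\ell d^\PP_\pm \Pisf_\ell^{-1}$ and check inductively along the path that these conjugated operators build $\Grow_\ell(\hat\pi,\Sigma)$ step by step.
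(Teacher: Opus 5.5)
\textbf{There is a genuine gap.} Your opening reduction is correct but contentless. Since $\Pisf_\ell$ is an invertible linear map carrying the basis $\flagh^\pm_\eta(X_\ell)h_\lambda[(1-t)X]$ onto $\flagh_\eta(X_\ell)h_\lambda[X]$, the claim of ``same coefficients'' is just a restatement of the theorem. Everything therefore rests on Steps 1 and 2, and they do not work as described.

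Step 1 superizes only the tail letters, which produces $g[(1-t)(x_{\ell+1}+x_{\ell+2}+\cdots)]$. But the plethysm in $\Pisf_\ell$ and the factor $h_\lambda[(1-t)X]$ in the basis use the full alphabet $X=x_1+x_2+\cdots$. Rewriting the tail as $X-(x_1+\cdots+x_\ell)$ pushes polynomial factors back into the flagged part, and $\Pisf_\ell$ is not multiplicative. So the words carrying flagged letters and the words carrying tail letters cannot be matched separately.

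This already fails for $\ell=1$, $n=2$, $\hat\pi=\mathsf{E}\mathsf{N}\mathsf{E}$, $\Sigma=\emptyset$, where there are no attacking pairs and only the flag $T(1)=1$:
\begin{itemize}
\item The words with $T(2)\in\{1,\bar 1\}$ contribute $(1-t)x_1^2$ to $\Growpm_1$ and $x_1^2$ to $\Grow_1$. But $\Pisf_1$ is the identity on $\QQ(q,t)[x_1]$, so these do not correspond.
\item The tail words contribute $(1-t)x_1(X-x_1)$ and $x_1(X-x_1)$ respectively. However, $\Pisf_1\bigl((1-t)x_1(X-x_1)\bigr)=x_1X-(1-t)x_1^2$, not $x_1(X-x_1)$.
\end{itemize}
Only after adding the two groups does one get $\Pisf_1\Growpm_1=\Grow_1=x_1X$. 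So the split ``Step 1 produces the $h_\lambda$ factor, Step 2 the $\flagh_\eta$ factor'' is false. The obvious repair, superizing over the full alphabet at unflagged positions, runs into the interaction of small letters with the flagged positions through attacking pairs and markings.

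Step 2 is where the real content lies, and it is only asserted (``this should produce\dots''). Either reading of it fails:
\begin{itemize}
\item If you standardize whole words, the individual standardization classes are in general only quasisymmetric in $x_{\ell+1},x_{\ell+2},\dots$. They are then not in $\P(\ell)$, and $\Pisf_\ell$ is not even defined on them.
\item If you standardize only the flagged letters, the example above shows the pieces do not correspond.
\end{itemize}
Either way you still need a mechanism showing how the weight $(-t)^{m(T)}$, the exclusion of $\bar i$ at position $i$ forced by $T(i)\le i$, and the attacking and marking constraints intertwine with $f\mapsto \pol\bigl(f/\prod_{i<j}(1-t\,x_i/x_j)\bigr)$. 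That is exactly the theorem.

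Your fallback is circular. In this framework the unsigned $\Grow_\ell$ is tied to $d^\PP_\pm$ only through this very identity, and you give no independent description of $\Pisf_\ell d^\PP_\pm\Pisf_\ell^{-1}$.

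For comparison, the paper gives no proof of its own. It imports the statement from \cite{nsshuffle}, which builds on the general flagged LLT model of \cite{flaggedllt}; the row and column super words are identified with instances of that model in Section~5.3 of \cite{nsshuffle}. A working argument needs that genuine input, or a direct proof of the general signed-versus-unsigned flagged LLT identity, rather than a tail superization.
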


\begin{prop}[{\cite[Proposition 5.7.2]{nsshuffle}}]
    (i) Flagged row LLT polynomials admit the following relation under Weyl symmetrization. 
    Precisely, for any \(\hat\pi \in \mathrm{D}_\ell(n)\) with marking \(\Sigma\), using the notation \(\pi = \mathsf{N}^\ell \hat{\pi}\) (where $\mathsf{N}^\ell$ denotes prepending $\ell$ consecutive north steps), we have
    \begin{equation}\label{weylrow}
        \Weyl_1 \cdots \Weyl_\ell \big( \Grow_\ell(\hat{\pi}, \Sigma) \big) = \omega \spa \chi(\pi, \Sigma) = \omega d^\ell_- \chi_\ell(\hat{\pi}, \Sigma).
    \end{equation}

    (ii) Similarly, flagged column LLT polynomials admit an analogous relation under the same symmetrization:
    \begin{equation}\label{weylcolumn}
        \Weyl_1 \cdots \Weyl_\ell \big( \Gcol_\ell(\hat{\pi}, \Sigma) \big) = \chi(\pi, \Sigma) = d^\ell_- \chi_\ell(\hat{\pi}, \Sigma).
    \end{equation}
\end{prop}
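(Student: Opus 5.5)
The plan is to push everything through Lemma~\ref{useful}. It converts Weyl symmetrization of a $\Pisf_\ell$-image into $d_-^\ell$ applied on the $V_\ell$ side, followed by the map $\Pisf_0\PP_0^{-1}$ on $\sym[X]$.

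\textbf{Row case (i).} Take $f=\Growpm_\ell(\hat\pi,\Sigma)\in\P(\ell)$.
\begin{enumerate}
\item By Theorem~\ref{5.4.1}, $\Pisf_\ell f=\Grow_\ell(\hat\pi,\Sigma)$.
\item By \eqref{e Gcal vs chiPP}, $\PP_\ell f=(-1)^n\chi_\ell(\hat\pi,\Sigma)$.
\item Lemma~\ref{useful} therefore gives
\[
\Weyl_1\cdots\Weyl_\ell\,\Grow_\ell(\hat\pi,\Sigma)=(-1)^n\,\Pisf_0\PP_0^{-1}\,d_-^\ell\chi_\ell(\hat\pi,\Sigma)=(-1)^n\,\Pisf_0\PP_0^{-1}\chi(\pi,\Sigma).
\]
\item On $\P(0)=V_0=\sym[X]$ the composite $\Pisf_0\PP_0^{-1}$ is explicit. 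From \eqref{isopr}, $\PP_0^{-1}g[X]=g[(t-1)X]$, and by definition $\Pisf_0 g[X]=g[X/(1-t)]$. Hence $\Pisf_0\PP_0^{-1}g[X]=g[-X]$.
\item For $g$ homogeneous of degree $d$ we have $g[-X]=(-1)^d\,\omega g$, where $\omega$ here means the plain involution that fixes $q,t$.
\item It remains to check that $\chi(\pi,\Sigma)$ is homogeneous of degree $n$ in $X$. Each $d_-$ lowers the $y$-count by one and each $d_+$ raises it by one. Along $\pi=\mathsf N^\ell\hat\pi$ there are $n$ of each, and $d_-$ (via $\Omega[-y_\ell^{-1}X]$ and the constant term) converts $y$-degree into $X$-degree. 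Alternatively, this follows from the combinatorial formula: $\Growpm_\ell$ has total degree $n$, one variable per cell, and $\PP_\ell$ preserves total degree.
\item With $d=n$ the two signs cancel, which yields \eqref{weylrow}.
\end{enumerate}

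\textbf{Column case (ii).} I would run the identical argument starting from $f=\Gcolpm_\ell(\hat\pi,\Sigma)(X;t^{-1})=\widetilde\chi^\PP_\ell(\hat\pi,\Sigma)$, using \eqref{eq5.4.1} in place of the row statement of Theorem~\ref{5.4.1}.
\begin{enumerate}
\item Writing $d_\pm^\PP=\PP^{-1}d_\pm\PP$ gives $\PP_\ell f=(-1)^n\widetilde\chi_\ell(\hat\pi,\Sigma)$. Here $\widetilde\chi_\ell$ is the $V_\ell$-element built from $d_-$, $t^{-r}d_+$ and the twisted commutators, with the signs pulled out exactly as in \eqref{e Gcal vs chiPP}.
\item The same lemma and the same degree count then give
\[
\Weyl_1\cdots\Weyl_\ell\,\Gcol_\ell(\hat\pi,\Sigma)(X;t^{-1})=\omega\,d_-^\ell\,\widetilde\chi_\ell(\hat\pi,\Sigma).
\]
\end{enumerate}

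\textbf{Main obstacle.} The hard step is identifying the symmetric element $d_-^\ell\widetilde\chi_\ell(\hat\pi,\Sigma)$. I must show that $\omega\,d_-^\ell\widetilde\chi_\ell(\hat\pi,\Sigma)$ equals $\chi(\pi,\Sigma)$, read with the $t\mapsto t^{-1}$ convention indicated in row (2) of Figure~\ref{fig:summary}.

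My first attempt would be combinatorial, specialising to $\ell=0$, where there is no flag condition. The signed column super-word model then becomes the super-ization of the ordinary column LLT polynomial. The classical symmetry exchanges positive and negative letters, which swaps the row and column attack conditions and is implemented by $\omega$. Under this symmetry the column LLT at $t^{-1}$ matches $\omega$ of the row LLT $\chi(\pi,\Sigma)$, and the normalising factor $(-t)^{-m(T)}$ is designed precisely so that no extra power of $t$ survives.

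If that bookkeeping becomes messy, the fallback is algebraic. I would compare the operator words for $\widetilde\chi_\ell$ and $\chi_\ell$ after applying $d_-^\ell$, using the $\AA_{t,q}$ relations (including $d_-^2T_{\ell-1}=d_-^2$) to absorb the factors $t^{-r}$ into the commutator normalisations. This reduces the claim to Mellit's corresponding symmetric identity for column LLT polynomials.
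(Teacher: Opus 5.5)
Your row case (i) is correct. It derives the statement from Lemma~\ref{useful}, Theorem~\ref{5.4.1} and \eqref{e Gcal vs chiPP}, together with $\Pisf_0\PP_0^{-1}g=g[-X]$ and the fact that $\chi(\pi,\Sigma)$ is homogeneous of degree $n$. The paper gives no argument of its own for this statement; it simply imports it from \cite{nsshuffle}. So your derivation is a legitimate alternative that uses only results the paper already recalls.

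Part (ii) has a genuine gap, exactly at the step you flag. Your reduction to the identity $\omega\, d_-^\ell\widetilde\chi_\ell(\hat\pi,\Sigma)=\chi(\pi,\Sigma)(X;t^{-1})$ is right, but neither proposed route is carried out. The algebraic fallback also cannot work as described. The relations of $\AA_{t,q}$ are $\QQ(q,t)$-linear, so rewriting operator words never turns an expression in $t$ into one evaluated at $t^{-1}$. An antilinear ingredient is needed, and the paper supplies one. Proposition~\ref{p flip LLT} combined with \eqref{e Gcal vs chiPP} gives $\barwb\,\chi_\ell(\hat\pi,\Sigma)=\PP_\ell\,\Gcolpm_\ell(\hat\pi,\Sigma)(X;t^{-1})$, i.e.\ your $\widetilde\chi_\ell$ equals $(-1)^n\barwb\,\chi_\ell$. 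Since $\barwb$ commutes with $d_-$ and restricts to $\overline{\omega}$ on $V_0$, we get
\[
d_-^\ell\widetilde\chi_\ell(\hat\pi,\Sigma)=(-1)^n\,\overline{\omega}\,\chi(\pi,\Sigma)=(-1)^n\chi(\pi,\Sigma)[-X;t^{-1}]=\omega\,\chi(\pi,\Sigma)(X;t^{-1}).
\]
This is the missing identity. It remains to replace $t$ by $t^{-1}$, which commutes with $\Weyl_1\cdots\Weyl_\ell$ because the Demazure operators do not involve $t$.

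Your combinatorial route can also be completed, but as sketched it is misaimed. By the $\ell=0$ cases of \cite[Theorem 5.6.4]{nsshuffle} and \eqref{eq5.4.1}, the claim is exactly $\Gcol_0(\pi,\Sigma)=\omega\,\Grow_0(\pi,\Sigma)$, at the same $t$ with no inversion. In this paper's conventions $\chi=\omega\,\Grow_0$, so $\chi$ is not ``the row LLT''. Moreover, the superization argument needs more than swapping positive and negative letters: you must also reverse the order of the alphabet, which is harmless only because there is no flag when $\ell=0$.
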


\begin{remark}
    We note that the original statement in \cite[Section~5.7]{nsshuffle} is phrased in terms of the connection to symmetric LLT polynomials. 
    However, by \cite[Remark 5.7.1]{nsshuffle}, the original formulation is equivalent to the one we present here.
\end{remark}

\section{The nonsymmetric compositional Delta theorem}
\label{secnscdt}
In this section, we establish a signed nonsymmetric compositional Delta theorem, whose combinatorial side is given by signed flagged row LLT polynomials. 
By applying the $\ell$-nonsymmetric plethysm map to this identity, we then obtain the nonsymmetric compositional Delta theorem, 
which corresponds to \eqref{eq1}. 
Our proof proceeds by adapting the symmetric approach of D'Adderio and Mellit \cite{proofdelta} 
and reinterpreting it in the language of flagged LLT polynomials, introduced in Subsection~\ref{flagged}. 
Furthermore, by applying the Weyl symmetrization operator $\Weyl_1 \dots \Weyl_\ell$ to our nonsymmetric identity, 
we systematically recover the $\omega$ statement of the original compositional Delta theorem.

\subsection{Signed, nonsymmetric compositional Delta theorem}
We first recall the original compositional Delta theorem.
\begin{thm}[{\cite[Theorem 5.2 and Theorem 5.3]{proofdelta}}]\label{thm compositional delta conjecture}
Let \(\alpha = (\alpha_1, \alpha_2, \dots, \alpha_\ell)\) be a composition of $n-k$ of length $\ell$.  
    \begin{equation}\label{eq compositional delta conjecture}
       \Theta_{e_k} \nabla C_{\alpha}(X;t) = \mathop{\sum_{P\in \LD(n)^{\ast k}}}_{\dcomp(P)=\alpha}q^{\area(P)}t^{\dinv(P)}\xx^{P},
    \end{equation}
    where \(\xx^{P} := \prod_{i \in \ZZ_+} x_i^{\# \text{ of } i\text{'s in } P}\).
\end{thm}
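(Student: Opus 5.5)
This statement is the original compositional Delta theorem of D'Adderio--Mellit, which the paper recalls from \cite{proofdelta} rather than proves anew. My plan is to reconstruct their argument in the operator language of Section~2.

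\textbf{Reduction to $V_0$.} First I would write $C_\alpha$ in terms of the Dyck path algebra. By the Carlsson--Mellit identification, $C_\alpha$ equals, up to an explicit sign and power of $t$, the element $d_-^\ell\bigl(y_1^{\alpha_1-1}\cdots y_\ell^{\alpha_\ell-1}d_+^\ell(1)\bigr)$ transported through the $\Q_\ell$-conjugation. Since $\M$ restricts to $\Theta(u)\nabla'\overline{\omega}$ on $V_0$ and commutes with $d_-^{CM}$, we get
\[
\Theta(u)\nabla'\overline{\omega}\,C_\alpha=d_-^\ell\,\M^\Q\bigl(y_1^{\alpha_1-1}\cdots y_\ell^{\alpha_\ell-1}d_+^\ell(1)\bigr),
\]
again up to sign and a power of $t$. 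Extracting the coefficient of $u^k$ then gives $\Theta_{e_k}$. Handling $\overline{\omega}$ and the sign of $\nabla'$ is only bookkeeping.

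\textbf{Commuting $\M^\Q$ through the word.} Next I would push $\M^\Q$ to the right using Proposition~\ref{prenewM}. Each $d_+$ becomes $\frac{-y_1}{1-uy_1}d_+^*$. Each $y_i$ is conjugated to $z_i$: Mellit's relations express it via $T_i^{-1}$ and $z_1$, with the factor $(1-uy_1)^{-1}$. Finally $\M^\Q(1)=1$. This leaves an expression built from $d_+^*$, $z_i$, $y_1$ and geometric series in $uy_1$.

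\textbf{Rewriting as Dyck paths.} The main step is to rewrite this expression as a sum over Dyck paths, using relation \eqref{relation} $z_1d_+=-qt^{\ell+1}y_1d_+^*$ and the $d_\pm$ commutation relations. The goal is a positive sum of words in $d_\pm$ and commutators $\frac{1}{t-1}[d_-,d_+]$, weighted by $q^{\area}$ and indexed by decorated Dyck paths.
\begin{itemize}
\item Expanding $\frac{1}{1-uy_1}=\sum_j u^jy_1^j$ produces the double-rise choices. A power $u$ accompanies a $y_1$ that produces a north step in the same column without area weight; these are the decorated rises.
\item The diagonal composition is controlled by the exponents $\alpha_i-1$ of the $y$'s: each $y_1$ produces a new row touching the diagonal.
\end{itemize}
I expect this to be the main obstacle. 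One has to organize the recursion by induction on the path, via a characteristic-function recursion as in \cite{cmellit}, so that the $q$-weights and the $u$-weights land exactly on $\area(\pi,dr)$ and $|dr|=k$.

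\textbf{Concluding.} Once the operator side equals $\sum_{(\pi,dr)\in\mathrm{D}(\alpha)^{*k}}q^{\area(\pi,dr)}\chi(\pi,\Sigma_\pi)$, I would invoke Mellit's identification of $\chi(\pi,\Sigma)$ with the (signed) LLT polynomial. Expanding it as the sum over word parking functions of $t^{\dinv}X^P$ gives the right-hand side of \eqref{eq compositional delta conjecture}.
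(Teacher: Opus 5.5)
Your reduction to $V_0$ and your final step are fine. They are precisely the outer layers of the paper's Proposition~\ref{prop:recover_omega}. Mellit's formula \eqref{eq:Calpha}, the fact that $\M$ restricts to $\Theta(u)\nabla'\overline{\omega}$ on $V_0$ (which already packages the five-term identity \eqref{thetanabla}), and $\M^\Q d_-=d_-\M^\Q$ turn the left side into $d_-^\ell$ applied to $t^{\ell-|\alpha|}\M^\Q_k\bigl(y_1^{\alpha_1-1}\cdots y_\ell^{\alpha_\ell-1}d_+^\ell(1)\bigr)$. Your check that $\M(1)=1$ is also correct. But the whole content of the theorem then sits in the identity \eqref{micheleeq},
\[
t^{\ell-|\alpha|}\M^\Q_k\bigl(y_1^{\alpha_1-1}\cdots y_\ell^{\alpha_\ell-1}d_+^\ell(1)\bigr)=\sum_{(\pi,dr)\in\mathrm{D}(\alpha)^{\ast k}}q^{\area(\pi,dr)}\,\chi_\ell(\hat\pi',\Sigma_\pi),
\]
and your proposal does not prove it; it only names it as the main obstacle. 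The paper does not prove it either. It recalls the theorem from \cite{proofdelta}, and where it needs \eqref{micheleeq} it imports \cite[Theorem 6.5]{proofdelta} together with \cite[Theorem 6.22]{theta}. So Proposition~\ref{prop:recover_omega} is a consistency check, not a proof.

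The mechanism you sketch does not fill this gap. Pushing $\M^\Q$ to the right only rewrites the operator side as an expression in $d_+^*$, $y_1$, the series $(1-uy_1)^{-1}$ and $T$-conjugates of $z_1$, applied to $1$. None of these is a path-building step, and you give no procedure that turns such an expression into a positive sum of $d_\pm$-words indexed by decorated paths.

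What is needed is the input the paper imports. The elements $M_\alpha^{\ast k}$ of \cite{theta} are built from $d_\pm^{CM}$ by a recursion extending the recursion of \cite{cmellit}, and \cite[Theorem 6.22]{theta} shows they equal the decorated-path sums. Then \cite[Theorem 6.5]{proofdelta} shows that the operator side obeys the same recursion, using Proposition~\ref{originalM} and $\AA_{t,q}$ relations such as \eqref{relation}; uniqueness gives equality. Without formulating that recursion for all compositions and all $k$ at once, and checking both sides against it, there is no argument. In particular you must show how the $u$-dependence of the intertwining relations produces exactly the decorated rises.

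Your heuristics are also off. The number of diagonal touches is $\ell$, matching the $\ell$ factors $d_+$ and the prefix $\mathsf{N}^\ell$ of $\pi'$, while $\alpha_i$ counts the undecorated rows of the $i$-th block. So a $y_1$ does not create a touching row.

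Finally, the characteristic functions that occur are $\chi(\pi',\Sigma_\pi)$ for the $\zeta$-image $\pi'$, not $\chi(\pi,\Sigma_\pi)$. Matching their LLT expansions with $q^{\area}t^{\dinv}\xx^P$ over $\LD(n)^{\ast k}$ requires the $\zeta$ map (this is \cite[Remark 6.21]{theta}), not just Mellit's LLT identification.
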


\begin{remark}
From \cite[Proposition 6.2]{proofdelta}, we have the operator identity:
\begin{equation}\label{thetanabla}
    \Theta_{k} \nabla' = \sum_{i=0}^k \e_i^* \nabla' \e_{k-i}^*.
\end{equation}

Throughout the remainder of this work, we will work primarily with $\tau_u^* \nabla' \tau_u^*$, 
since the above identity is equivalent to the coefficient identity (using our notation defined below):
\[
    \sum_{i=0}^k \e_i^* \nabla' \e_{k-i}^* = \left[\tau_u^* \nabla' \tau_u^*\right]_k.
\]
Furthermore, we have the identity \(\Theta_{k} \nabla' f = (-1)^d \Theta_{k} \nabla f\) 
for any homogeneous element \(f \in \sym[X]\) of degree $d$.
As a direct consequence, we obtain the following relation:
\begin{equation}\label{remarksign}
        (-1)^d \left[\tau_u^* \nabla' \tau_u^*\right]_k = (-1)^d \sum_{i=0}^k \e_i^* \nabla' \e_{k-i}^* f = \Theta_{e_k} \nabla  f.
\end{equation}
\end{remark}

Let $n, k$ be positive integers, and let \(\alpha = (\alpha_1, \dots, \alpha_\ell)\) be a composition of $n-k$ of length $\ell$. We define:
\begin{equation}\label{defnalg}
    \alg := (-1)^{n} \PP_\ell^{-1} t^{\ell-|\alpha|} \M^\Q_k \left( y_1^{\alpha_1-1} \dots y_\ell^{\alpha_\ell-1} d_+^\ell (1)\right).
\end{equation}
We adopt the following standard notation: for any operator $f$ depending on the formal parameter $u$, 
we write $f_k$ to denote the coefficient operator $(-1)^k \left<u^k\right> f$.

With this notation, we have \(\M^\Q_k := (\M^\Q)_k\), where \(\M^\Q := \Q_\ell^{-1} \M \Q_\ell\). 
Analogously, we write \(\M_k := (\M)_k\).

Next, we recall the $\zeta$ map from D'Adderio, Iraci, and Vanden Wyngaerd \cite[Proposition 6.7]{theta}, which is an extension of Haglund's classical result \cite[Theorem 3.15]{hag08}. We write $\pi'$ for the image of a decorated Dyck path $\pi$ under this $\zeta$ map. Notably, if the diagonal composition of $\pi$ is the composition $\alpha$ with length $\ell$, then $\pi'$ starts with $\ell$ consecutive north steps. Let $\hat{\pi}' \in \mathrm{D}_\ell(n)$ be the partial Dyck path obtained by removing these first $\ell$ north steps, i.e., $\pi' = \mathsf{N}^\ell \hat{\pi}'$. 
We define the marking $\Sigma_\pi$ of $\hat{\pi}'$ to be the set of corner boxes corresponding to pairs of consecutive north steps along the vertical runs of $\pi$.

We are now in a position to state our first main result. It provides a purely combinatorial characterization of the algebraic element $\alg$ defined in \eqref{defnalg}, utilizing signed flagged row LLT polynomials.

\begin{thm}\label{q ns delta unmod}
For positive integers $n, k$ and a strict composition \(\alpha = (\alpha_1, \dots, \alpha_\ell)\) of size $n-k$,
\begin{equation} \label{e ns delta unmod}
    \alg = \mathop{\sum_{(\pi, dr)\in \mathrm{D}(\alpha)^{\ast k}}} q^{\area(\pi, dr)} \Growpm_\ell(\hat{\pi}', \Sigma_\pi),
\end{equation}
where $\area(\pi, dr)$ and $\Growpm_\ell(\hat{\pi}', \Sigma_\pi)$ are defined in Section~\ref{combinatorialobj} and Subsection~\ref{flaggedrowllt}, respectively.
\end{thm}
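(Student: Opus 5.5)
The plan is to transport D'Adderio--Mellit's proof of the compositional Delta theorem from $V_0$ to $V_\ell$, stopping before the final application of $d_-^\ell$. By \eqref{e Gcal vs chiPP}, the right-hand side of \eqref{e ns delta unmod} equals
\[
(-1)^n\PP_\ell^{-1}\sum_{(\pi,dr)} q^{\area(\pi,dr)}\chi_\ell(\hat\pi',\Sigma_\pi).
\]
Comparing with \eqref{defnalg}, the theorem becomes an identity in $V_\ell$:
\[
t^{\ell-|\alpha|}\,\M^\Q_k\bigl(y_1^{\alpha_1-1}\cdots y_\ell^{\alpha_\ell-1}d_+^\ell(1)\bigr)=\sum_{(\pi,dr)\in \mathrm{D}(\alpha)^{\ast k}} q^{\area(\pi,dr)}\chi_\ell(\hat\pi',\Sigma_\pi).
\]
This is exactly the intermediate (pre-$d_-^\ell$) statement in \cite[Section 6--7]{proofdelta}, rewritten via $\Q_\ell$. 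So the work is to redo their computation with $\M^\Q$ in place of $\M$, keeping track of signs and powers of $t$.

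\textbf{Step 1: move $\M^\Q$ past the $d_+$'s.} Write the argument as a word in $y_i$ and $d_+$. Using Proposition~\ref{prenewM}, commute $\M^\Q$ past each $d_+$. Each passage turns $d_+$ into $\frac{-y_1}{1-uy_1}d_+^*$. Each $y_1$, after conjugating by $\M^\Q$, becomes $(1-uy_1)^{-1}z_1$, as in the $\Q$-version of Proposition~\ref{originalM}. Since $\M^\Q(1)=1$ on $V_0$ (it is $\Theta(u)\nabla'\overline\omega$ applied to $1$), we obtain an expression of the form
\[
z_1^{\alpha_1-1}\,\tfrac{-y_1}{1-uy_1}\,d_+^*\cdots(1),
\]
with the $y$- and $z$-monomials interleaved. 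To handle $y_i$ for $i>1$, write $y_1^{\alpha_1-1}\cdots y_\ell^{\alpha_\ell-1}d_+^\ell$ in the standard nested form $y_1^{\alpha_1-1}d_+\,y_1^{\alpha_2-1}d_+\cdots$, using $d_+y_i=y_{i+1}d_+$ up to $T$'s. This is how Carlsson--Mellit reduce compositional shuffle; the extra factor $t^{\ell-|\alpha|}$ should absorb the resulting powers of $t$.

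\textbf{Step 2: extract the coefficient $(-1)^k\langle u^k\rangle$.} Expand each factor $(1-uy_1)^{-1}=\sum_m u^m y_1^m$. Then use relation \eqref{relation}, $z_1d_+=-qt^{\ell+1}y_1d_+^*$, together with the $\AA_{t,q}$ relations, to convert the resulting word in $d_\pm, d_+^*, y_1, z_1$ into a sum over paths. This reproduces the recursion of \cite[Lemma 7.x]{proofdelta}. In that recursion, a factor $uy_1$ that is absorbed corresponds to choosing a decorated double rise: it contributes no area but contributes a $y_1$ that would otherwise count $q$. Undecorated steps contribute $q$-weighted area via $d_+^*$.

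\textbf{Step 3: match with the combinatorial side.} Show that the resulting operator words are exactly the sequences of $d_-$, $d_+$ and marked-corner commutators $\frac1{t-1}[d_-,d_+]$ from Definition~\ref{d-d+p}, read along $\hat\pi'$. This uses the $\zeta$ map: it sends diagonal composition $\alpha$ to the initial $\mathsf N^\ell$, and sends the vertical runs of $\pi$ to the markings $\Sigma_\pi$. Since we never apply $d_-^\ell$, the initial $\ell$ north steps are simply omitted, which is why $\chi_\ell(\hat\pi',\Sigma_\pi)$ appears.

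\textbf{Main obstacle.} The main difficulty is Step 2/3: ensuring that the D'Adderio--Mellit characteristic-function recursion, which they state after applying $d_-^\ell$ or inside $V_0$, actually holds at the level of $V_\ell$. Every step there uses only the algebra relations and the intertwining relations, which hold on all of $V_*$ by Propositions~\ref{propnewtau}, \ref{originalM} and \ref{prenewM}. So I would verify line by line that no symmetrization is used, and check the global sign $(-1)^n$ and the $t$-power bookkeeping.
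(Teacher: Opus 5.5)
\textbf{Overall.} Your reduction is the paper's: by \eqref{e Gcal vs chiPP} the theorem is equivalent to the $V_\ell$ identity \eqref{micheleeq}. You also rightly identify this as D'Adderio--Mellit's statement before $d_-^\ell$, transported through $\Q_\ell$. From there, however, the paper does no further computation, and the ``main obstacle'' you describe rests on a misreading: both ingredients are already stated in $V_\ell$.
\begin{itemize}
\item \cite[Theorem 6.5]{proofdelta} gives $t^{\ell-|\alpha|}\M_k\bigl((y^{CM}_1)^{\alpha_1-1}\cdots(y^{CM}_\ell)^{\alpha_\ell-1}(d^{CM}_+)^\ell(1)\bigr)=M_\alpha^{\ast k}$, with $M_\alpha^{\ast k}\in V_\ell$.
\item \cite[Theorem 6.22]{theta} (not \cite{proofdelta}) expands $M_\alpha^{\ast k}=\sum_{(\pi,dr)\in\mathrm{D}(\alpha)^{\ast k}}q^{\area(\pi,dr)}\chi^{CM}_\ell(\hat\pi',\Sigma_\pi)$, again in $V_\ell$.
\end{itemize}
Only the symmetric version, \cite[Remark 6.21]{theta}, involves $d_-^\ell$.

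What remains is the sign bookkeeping, which you flag but do not carry out.
\begin{itemize}
\item By \eqref{CMM} and $\Q_0=\mathrm{id}$, the CM argument equals $(-1)^\ell\Q_\ell\bigl(y_1^{\alpha_1-1}\cdots y_\ell^{\alpha_\ell-1}d_+^\ell(1)\bigr)$.
\item Every term of $\chi_\ell(\hat\pi',\Sigma_\pi)$ is a word in $2n-\ell$ operators $d_\pm$ (a marked corner contributes a commutator of two). Hence $\chi^{CM}_\ell(\hat\pi',\Sigma_\pi)=(-1)^{2n-\ell}\Q_\ell\chi_\ell(\hat\pi',\Sigma_\pi)$.
\end{itemize}
The two signs agree, and conjugating by $\Q_\ell$ yields \eqref{micheleeq}. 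That is the whole proof in the paper.

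\textbf{Steps 1--3 as a standalone proof.} Read this way, they have a gap. Step 2 (``reproduces the recursion of [Lemma 7.x]'') and Step 3 (matching the resulting operator words with $\chi_\ell(\hat\pi',\Sigma_\pi)$ via $\zeta$) are asserted rather than argued. They are precisely the content of the two theorems cited above: the identification with the recursively defined $M_\alpha^{\ast k}$, and its decorated-Dyck-path expansion via the $\zeta$ map.

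Step 1 also contains a convention slip. Under \eqref{CMM}, $d_+^{*CM}$ corresponds to $y_1d_+^*$, not to $d_+^*$. Hence $\Q_\ell^{-1}[d_+^{*CM},d_-^{CM}]\Q_\ell=-y_1[d_+^*,d_-]$, and so $\Q_\ell^{-1}z_1^{CM}\Q_\ell=-y_1z_1$. The $\Q$-version of Proposition~\ref{originalM} therefore reads $\M^\Q y_1=-\frac{y_1}{1-uy_1}\,z_1\,\M^\Q$, not $(1-uy_1)^{-1}z_1\M^\Q$. The extra factors of $y_1$ would change every subsequent term of your expansion.

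Citing the CM-convention results and translating through $\Q_\ell$ only once at the end, as the paper does, avoids all of this.
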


\begin{proof}
First, we note that by identity \eqref{e Gcal vs chiPP}, equation \eqref{e ns delta unmod} is equivalent to the following identity:
    \begin{equation}\label{micheleeq}
        t^{\ell-|\alpha|} \M^\Q_k \left( y_1^{\alpha_1-1} \dots y_\ell^{\alpha_\ell-1} d_+^\ell (1)\right) = 
        \mathop{\sum_{(\pi, dr)\in \mathrm{D}(\alpha)^{\ast k}}} q^{\area(\pi, dr)} \chi_\ell(\hat{\pi}', \Sigma_\pi).
    \end{equation}
To prove \eqref{micheleeq}, we first recall how D'Adderio and Mellit \cite{proofdelta} and D'Adderio et al.\ \cite{theta} proved the original compositional Delta theorem.

In \cite[Theorem 6.5]{proofdelta}, D'Adderio and Mellit showed that
    \begin{equation}\label{eq:mellit20}
        t^{\ell-|\alpha|} \M_k \left( (y^{CM}_1)^{\alpha_1-1} \dots (y^{CM}_\ell)^{\alpha_\ell-1} (d^{CM}_+)^\ell (1) \right) = M_\alpha^{\ast k}.
    \end{equation}
Here, $M_\alpha^{\ast k}$ is the element constructed by successive actions of $d^{CM}_-$ and $d^{CM}_+$ on $1$ originally established in \cite{theta}.

We define $\chi^{CM}_\ell (\hat{\pi}', \Sigma_\pi)$ to be the expression obtained from Definition \ref{d-d+p} 
by replacing $-d^\PP_+$ and $d^\PP_-$ with $d^{CM}_+$ and $d^{CM}_-$, respectively. 
One can readily check that every term in $\chi^{CM}_\ell(\hat{\pi}', \Sigma_\pi)$ 
is an element obtained by acting on $1$ with $2n-\ell$ operators from $\{d_-^{CM}, d_+^{CM}\}$. In particular, since \(\Q^{-1}_0 (1) = 1\), we have
    \[
        \chi^{CM}_\ell(\hat{\pi}', \Sigma_\pi) = (-1)^{2n-\ell} \Q_\ell \chi_\ell(\hat{\pi}', \Sigma_\pi).
    \]

Furthermore, D'Adderio et al.\ established the following combinatorial expansion in \cite[Theorem 6.22]{theta}:
    \begin{equation}\label{combd-}
        M_\alpha^{\ast k} = \mathop{\sum_{(\pi, dr)\in \mathrm{D}(\alpha)^{\ast k}}} q^{\area(\pi, dr)} \chi^{CM}_\ell(\hat{\pi}', \Sigma_\pi).
    \end{equation}

Combining \eqref{eq:mellit20} and \eqref{combd-}, we immediately obtain:
    \begin{equation}\label{eq:mellit20combd-}
        t^{\ell-|\alpha|} \M_k \left( (y^{CM}_1)^{\alpha_1-1} \dots (y^{CM}_\ell)^{\alpha_\ell-1} (d^{CM}_+)^\ell (1) \right) = 
        \mathop{\sum_{(\pi, dr)\in \mathrm{D}(\alpha)^{\ast k}}} q^{\area(\pi, dr)} \chi^{CM}_\ell(\hat{\pi}', \Sigma_\pi).
    \end{equation}

Finally, we rewrite identity \eqref{eq:mellit20combd-} in terms of our operators $d_-, d_+, y_i$, using the correspondence from \eqref{CMM}:
    \[
        (-1)^\ell t^{\ell-|\alpha|} \M_k \Q_\ell \left( y_1^{\alpha_1-1} \dots y_\ell^{\alpha_\ell-1} d_+^\ell (1) \right) = 
        (-1)^{2n-\ell} \Q_\ell \mathop{\sum_{(\pi, dr)\in \mathrm{D}(\alpha)^{\ast k}}} q^{\area(\pi, dr)} \chi_\ell(\hat{\pi}', \Sigma_\pi).
    \]

Recalling that \(\M^\Q_k = \Q_\ell^{-1} \M_k \Q_\ell\), we can cancel the $\Q_\ell$ factors from both sides, which yields \eqref{micheleeq} and completes the proof.
\end{proof}

\subsection{The nonsymmetric compositional Delta theorem}
By applying the $\ell$-nonsymmetric plethysm map $\Pisf_\ell$ to both sides of Theorem \ref{q ns delta unmod} and noting Theorem \ref{5.4.1}, we immediately obtain our second main result, which provides an unsigned formulation. This expresses the action of $\Pisf_\ell \spa \alg$ strictly in terms of flagged row LLT polynomials.

\begin{thm} \label{q ns delta mod}
With the notation above, we have:
\begin{equation} \label{e ns delta mod}
    \Pisf_\ell \spa \alg
    = \mathop{\sum_{(\pi, dr)\in \mathrm{D}(\alpha)^{\ast k}}} q^{\area(\pi, dr)} \Grow_\ell(\hat{\pi}', \Sigma_\pi).
\end{equation}
\end{thm}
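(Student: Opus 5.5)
The plan is to apply the $\ell$-nonsymmetric plethysm map $\Pisf_\ell$ to both sides of the signed identity \eqref{e ns delta unmod} in Theorem~\ref{q ns delta unmod}, and then convert each signed term into an unsigned one using Theorem~\ref{5.4.1}.

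First, I will note that $\Pisf_\ell$ is $\mathbb{Q}(q,t)$-linear. This was remarked right after its definition: it acts by plethysm on the symmetric factor and by a polynomial-part truncation on the nonsymmetric factor, and neither operation involves $q$. Hence the scalars $q^{\area(\pi,dr)}$ may be pulled outside, and $\Pisf_\ell$ commutes with the finite sum over $\mathrm{D}(\alpha)^{\ast k}$. This gives
\[
\Pisf_\ell\,\alg=\mathop{\sum_{(\pi, dr)\in \mathrm{D}(\alpha)^{\ast k}}} q^{\area(\pi, dr)}\,\Pisf_\ell\bigl(\Growpm_\ell(\hat{\pi}', \Sigma_\pi)\bigr).
\]

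Second, I will check that each $\hat\pi'$ is a legitimate element of $\mathrm{D}_\ell(n)$ and that $\Sigma_\pi$ is a legitimate marking of it. Both facts come from the discussion of the $\zeta$ map: $\pi'$ begins with exactly $\ell$ north steps, since $\alpha$ has length $\ell$, and $\Sigma_\pi$ consists of corner boxes of $\hat\pi'$. With this in hand, the first identity of Theorem~\ref{5.4.1} applies term by term and gives $\Pisf_\ell(\Growpm_\ell(\hat\pi',\Sigma_\pi))=\Grow_\ell(\hat\pi',\Sigma_\pi)$. Substituting this into the display above yields \eqref{e ns delta mod}.

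The only point needing care is the second step: confirming that the marking produced by $\zeta$ really consists of outer corners of $\hat\pi'$. If it did not, the hypotheses of Theorem~\ref{5.4.1} would not be met. Since this is already implicit in the use of $\Growpm_\ell(\hat\pi',\Sigma_\pi)$ in Theorem~\ref{q ns delta unmod}, I expect no real obstacle; the argument is a direct consequence of linearity.
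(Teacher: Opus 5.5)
Your proposal is correct and matches the paper's argument. The paper also applies $\Pisf_\ell$ to both sides of Theorem~\ref{q ns delta unmod}, passes it through the sum by $\mathbb{Q}(q,t)$-linearity, and invokes the first identity of Theorem~\ref{5.4.1} term by term. A small correction to your justification: $\Pisf_\ell$ is $\mathbb{Q}(q,t)$-linear because plethysm leaves coefficients untouched, not because the operation does not involve $q$.
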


We can reformulate Theorem \ref{q ns delta mod} into a form strictly parallel to the original statement of the compositional Delta theorem by expanding the flagged row LLT polynomial $\Grow_\ell(\hat{\pi}', \Sigma_\pi)$.

\begin{defn}[{\cite[Section 6.3]{nsshuffle}}]
    Let \(\pi \in \mathrm{D}(\alpha)^{\ast k}\). A \emph{flagged weak word parking function} on $\pi$ is a labeling $w$ of the north steps of $\pi$ by positive integers satisfying:
    \begin{itemize}
        \item The labels are weakly decreasing upward along each vertical run of $\pi$;
        \item The label of the first $j$ north steps lying on the main diagonal is at most $j$, for \(1 \le j \le \ell\).
    \end{itemize}

We denote the set of all flagged weak word parking functions on $\pi$ by $\FWPF'_\alpha(\pi)$.

For any \(w \in \FWPF'_\alpha(\pi)\), let $w_i$ denote the label of the north step in the $i$-th row. The \emph{weak temporary diagonal inversion statistic} is defined as:
\[
    \dinv'(\pi, w) := \#\left\{ (i,j) \mid 1 \le i < j \le n, \ i \text{ attacks } j, \ \text{and } w_i \ge w_j \right\}.
\]
\end{defn}

This natural combinatorial description directly yields the identity stated in \eqref{eq1}.
\begin{cor}\label{c ns delta mod}
With the notation above,
\[
    \Pisf_\ell \spa \alg
    = \sum_{\pi \in \mathrm{D}(\alpha)^{\ast k}} \sum_{w \in \FWPF'_\alpha(\pi)} q^{\area(\pi, dr)} t^{\dinv'(\pi, w)} \,\xx^{\content(w)},
\]
where \(\xx^{\content(w)} := \prod_{i \in \ZZ_+} x_i^{\# \text{ of } i\text{'s in } w}\).
\end{cor}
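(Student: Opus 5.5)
The corollary follows from Theorem~\ref{q ns delta mod} once we expand each flagged row LLT polynomial $\Grow_\ell(\hat{\pi}', \Sigma_\pi)$ as a sum over labellings of the north steps of the Dyck path $\pi' = \mathsf{N}^\ell\hat{\pi}'$. By definition,
\[
\Grow_\ell(\hat\pi', \Sigma_\pi) = \sum_{T \in \FSSYT(\hat\pi',\Sigma_\pi)} t^{\inv(T)} \xx^{T},
\]
where $T\colon[n]\to\ZZ_+$ satisfies two conditions: the flag condition $T(j)\le j$ for $j\le \ell$, and, for each marked corner $(i,j)\in\Sigma_\pi$, the weak inequality $T(j)\le T(i)$ (equal positive letters being allowed). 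The plan is to identify these fillings with the elements of $\FWPF'_\alpha$ and to match the statistics. Summing over $(\pi,dr)$ then gives the displayed formula, with the inner sum read as running over labellings of the image path $\pi'$. This is the reading the notation $\FWPF'_\alpha(\pi)$ must carry, since the flag condition refers to diagonal north steps of the path whose first $\ell$ steps are the prepended $\mathsf{N}^\ell$.

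The steps, in order, are as follows.

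\textbf{Step 1.} Following the standard dictionary of \cite[Section~5.2]{nsshuffle} and \cite[Section~3.5]{mellit}, identify the index set $[n]$ of the word $T$ with the north steps (rows) of $\pi'$.

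\textbf{Step 2.} Under this identification, check that $\Area(\hat\pi')=\Area(\pi')$ is exactly the set of attacking pairs $(i,j)$, $i<j$, in the sense of Section~\ref{combinatorialobj}. Check likewise that the corner boxes in $\Sigma_\pi$, which come from consecutive north steps along vertical runs, are exactly the pairs of adjacent rows in a vertical run.

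\textbf{Step 3.} Translate the two defining conditions on $T$.
\begin{itemize}
\item The marking condition $T(j)\le T(i)$ for $(i,j)\in\Sigma_\pi$ becomes the requirement that labels weakly decrease upward along each vertical run.
\item The flag condition on the first $\ell$ letters becomes the bound on the labels of the first $\ell$ north steps. These are the prepended steps, which lie on the main diagonal.
\end{itemize}
So $\FSSYT(\hat\pi',\Sigma_\pi)$ is in bijection with the flagged weak word parking functions $w$.

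\textbf{Step 4.} Compare the statistics. For unsigned letters, an inversion is a pair $(i,j)\in\Area$ with $T(j)\le T(i)$, so $\inv(T)=\dinv'(\pi',w)$ and $\xx^{T}=\xx^{\content(w)}$.

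The main obstacle is Step~2 together with the orientation conventions. One must verify that the row/column and $(i,j)$-order conventions of $\Area(\hat\pi')$ in \cite{nsshuffle} match the attack relation defined here, and in particular that pairs one diagonal apart are oriented so that the weak inequality $w_i\ge w_j$ is the correct one. One must also check that the prepended $\ell$ rows are precisely the diagonal steps constrained by the flag. I would settle this by citing \cite[Section~6.3]{nsshuffle}, where the same expansion is carried out in the case $k=0$. Decorations do not affect the combinatorics of $\pi'$ beyond contributing the factor $q^{\area(\pi,dr)}$, so the argument there applies verbatim.
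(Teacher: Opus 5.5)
\textbf{There is a genuine gap: your Steps~1--3 use the wrong dictionary.} Your outline is the paper's. It combines Theorem~\ref{q ns delta mod} with a termwise expansion of $\Grow_\ell(\hat{\pi}',\Sigma_\pi)$, which the paper takes from \cite[Section~6.3]{nsshuffle}. The trouble is how you read $(\hat{\pi}',\Sigma_\pi)$: against the geometry of $\pi'$ itself, so that the inner sum runs over labellings of $\pi'$. That is not a harmless convention; under it the corollary is false.

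The missing ingredient is the $\zeta$ map. Since $\pi'=\zeta(\pi)$, the area data of $\pi'$ encode the dinv data of $\pi$, not of $\pi'$. Measured on $\pi'$, your Step~2 and Step~3 claims all fail:
\begin{itemize}
\item $\Area(\pi')$ is not the set of attacking pairs of rows of $\pi'$.
\item The corners in $\Sigma_\pi$ join rows lying in different vertical runs of $\pi'$.
\item Of the prepended steps, only the first lies on the diagonal, since row $j\le\ell$ of $\pi'$ has $a_j(\pi')=j-1$.
\end{itemize}
For a concrete failure, take $n=2$, $k=1$, $\alpha=(1)$. The only decorated path is $\pi=\mathsf{NNEE}$ with $dr=\{2\}$ and $\area(\pi,dr)=0$. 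Here $\pi'=\mathsf{NENE}$, $\Area(\pi')=\emptyset$, and $\Sigma_\pi=\{(1,2)\}$. The flag forces $T(1)=1$ and the corner forces $T(2)\le T(1)$, so $\Grow_1(\hat{\pi}',\Sigma_\pi)=x_1^2$. This matches the statement's sum over labellings of $\pi$. Your reading over labellings of $\pi'$ has no run condition, since both runs of $\mathsf{NENE}$ are single steps, and rows $1,2$ of $\pi'$ attack. It therefore produces $t\,x_1^2+x_1(x_2+x_3+\cdots)$ instead.

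\textbf{The correct identification goes through $\zeta$.} Let $\rho_r$ be the row of $\pi$ that produces the $r$-th north step of $\pi'$. These rows are listed by increasing $a_i(\pi)$, bottom to top within each diagonal. Set $w_{\rho_r}:=T(r)$. Then:
\begin{itemize}
\item A cell $(c,r)$ with $c<r$ lies in $\Area(\pi')$ exactly when rows $\rho_c$ and $\rho_r$ attack in $\pi$. This is the pairwise form of $\dinv(\pi)=\area(\zeta(\pi))$.
\item The corners in $\Sigma_\pi$ sit at $(c,r)$ with $\rho_c=i$ and $\rho_r=i+1$, for consecutive north steps $i,i+1$ of a vertical run of $\pi$. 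So the marking condition becomes $w_{i+1}\le w_i$ along the vertical runs of $\pi$.
\item $\rho_1,\dots,\rho_\ell$ are the $\ell$ rows of $\pi$ with $a_i(\pi)=0$, bottom to top. This is exactly where the flag of $\FWPF'_\alpha(\pi)$ sits.
\end{itemize}

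This also settles the orientation question you raised. An inversion of $T$ is an attacking pair whose $\zeta$-earlier label is $\ge$ its $\zeta$-later label. For two rows on the same diagonal, the $\zeta$ order agrees with the row order. For rows one diagonal apart, it reverses the row order. So the direction of the weak inequality comes out of the $\zeta$ correspondence; it cannot be fixed by matching conventions on $\pi'$ alone. Citing \cite[Section~6.3]{nsshuffle} is legitimate, but what you are citing is this $\zeta$-correspondence between $\Grow_\ell(\hat{\pi}',\Sigma_\pi)$ and labellings of $\pi$.
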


To demonstrate the strength of our nonsymmetric framework, we now show that applying Weyl symmetrization to the nonsymmetric compositional Delta theorem recovers the $\omega$ statement of the original compositional Delta conjecture.

\begin{prop}\label{prop:recover_omega}
Applying the Weyl symmetrization operator $\Weyl_1 \dots \Weyl_\ell$ to both sides of \eqref{e ns delta mod}
recovers the $\omega$ statement of the original compositional Delta theorem.
\end{prop}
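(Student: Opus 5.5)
We apply $\Weyl_1\cdots\Weyl_\ell$ to both sides of \eqref{e ns delta mod} and identify each side with the corresponding side of $\omega$ applied to \eqref{eq compositional delta conjecture}. The only tools needed are Lemma~\ref{useful} on the algebraic side and \eqref{weylrow} on the combinatorial side. Both reductions pass through the symmetric space $V_0=\sym[X]$ via $d_-^\ell$, so the bookkeeping is in matching the signs $(-1)^n$, $(-1)^d$ and the conventions of $\PP_0$ and $\Pisf_0$.

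\textbf{Combinatorial side.} By linearity and \eqref{weylrow}, the right-hand side becomes
\[
\sum_{(\pi,dr)\in \mathrm{D}(\alpha)^{\ast k}} q^{\area(\pi,dr)}\,\omega\, d_-^\ell \chi_\ell(\hat\pi',\Sigma_\pi)
=\sum_{(\pi,dr)\in \mathrm{D}(\alpha)^{\ast k}} q^{\area(\pi,dr)}\,\omega\,\chi(\pi',\Sigma_\pi).
\]
The symmetric LLT polynomials $\chi(\pi',\Sigma_\pi)$ are exactly what D'Adderio et al.\ sum over via the $\zeta$ map in \cite[Theorem 6.22]{theta}. Their expansion into labelled decorated Dyck paths with $\dcomp=\alpha$, weighted by $q^{\area}t^{\dinv}$, is the right-hand side of \eqref{eq compositional delta conjecture}. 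Hence the combinatorial side is $\omega$ of that sum.

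\textbf{Algebraic side.} By Lemma~\ref{useful} applied to $f=\alg$,
\[
\Weyl_1\cdots\Weyl_\ell\Pisf_\ell\alg=\Pisf_0\PP_0^{-1}d_-^\ell\PP_\ell\alg
=(-1)^n\Pisf_0\PP_0^{-1}\,t^{\ell-|\alpha|}d_-^\ell\M^\Q_k\bigl(y^{\alpha-1}d_+^\ell(1)\bigr).
\]
Here $\Pisf_0\PP_0^{-1}$ on $\sym[X]$ is $g[X]\mapsto g[-X]$ (since $\PP_0 g=g[X/(t-1)]$ and $\Pisf_0 g=g[X/(1-t)]$). On homogeneous degree-$n$ elements, $g[-X]=(-1)^n\omega g[X]$, which cancels the $(-1)^n$ and leaves
\[
\omega\, t^{\ell-|\alpha|}d_-^\ell\M^\Q_k\bigl(y^{\alpha-1}d_+^\ell(1)\bigr).
\]
Next we move $d_-^\ell$ past $\M^\Q_k$. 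Since $\M^\Q d_-=d_-\M^\Q$ (Proposition~\ref{prenewM}, coefficientwise in $u$), this equals $\omega\,\M_k\bigl(t^{\ell-|\alpha|}d_-^\ell y^{\alpha-1}d_+^\ell(1)\bigr)$ on $V_0$, using that $\Q_0$ is the identity.

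It then remains to show two things. First, $t^{\ell-|\alpha|}d_-^\ell y^{\alpha-1}d_+^\ell(1)$ equals $C_\alpha$ up to the sign and normalization conventions in \cite{cmellit} (there $(d_-^{CM})^\ell (y^{CM})^{\alpha-1}(d_+^{CM})^\ell(1)$ is a multiple of $C_\alpha$). Second, on $V_0$ the operator $\M_k$ equals $[\tau_u^*\nabla'\overline\omega\,\tau_{qtu}^*{}^{-1}]_k$-type expressions which, via \eqref{thetanabla} and \eqref{remarksign}, reduce to $(-1)^d\Theta_{e_k}\nabla$ composed with $\overline\omega$ on $C_\alpha$. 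Alternatively, and more cleanly, we cite \eqref{eq:mellit20} with $d_-^\ell$ applied, which is exactly how \cite{proofdelta} deduced Theorem~\ref{thm compositional delta conjecture}. Applying $\omega$ to that deduction yields $\omega\Theta_{e_k}\nabla C_\alpha$.

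\textbf{Main obstacle.} The delicate step is the sign and convention bookkeeping between the $\Q$-twisted operators, the $CM$ operators and $\PP_0,\Pisf_0$. Each $d_\pm$ contributes $-1$ under \eqref{CMM}; applying $d_-^\ell$ to both sides of \eqref{micheleeq} and comparing with $(d^{CM}_-)^\ell$ applied to \eqref{eq:mellit20} introduces $(-1)^\ell$ factors. We plan to check that these total factors cancel against $(-1)^{2n-\ell}$ and the degree sign of $g\mapsto g[-X]$. The cleanest route is to apply $d_-^\ell$ directly to \eqref{micheleeq} and compare with the symmetric identity in \cite{proofdelta} term by term, rather than recomputing $C_\alpha$.
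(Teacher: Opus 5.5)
Your proposal follows the paper's proof essentially step for step. Lemma~\ref{useful} and \eqref{pi0p0} reduce the left side to $\omega\,t^{\ell-|\alpha|}d_-^\ell\M^\Q_k\bigl(y_1^{\alpha_1-1}\cdots y_\ell^{\alpha_\ell-1}d_+^\ell(1)\bigr)$, you commute $d_-^\ell$ past $\M^\Q_k$ by \eqref{eqprenewM} and identify $\M_k$ on $V_0$, and the combinatorial side is handled by \eqref{weylrow} together with \cite{theta}.

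One point needs fixing: your first ``remaining'' fact is wrong as stated. By Mellit's identity \eqref{eq:Calpha}, $(-1)^{|\alpha|}C_\alpha=t^{\ell-|\alpha|}\,\overline{\omega}\bigl(d_-^\ell y_1^{\alpha_1-1}\cdots y_\ell^{\alpha_\ell-1}d_+^\ell(1)\bigr)$, so that element is a scalar multiple of $\overline{\omega}C_\alpha$, not of $C_\alpha$. This $\overline{\omega}$ is exactly what cancels the one in $\M_k|_{V_0}=[\tau_u^*\nabla'\overline{\omega}(\tau_{qtu}^*)^{-1}]_k=[\tau_u^*\nabla'\tau_u^*\overline{\omega}]_k$, after which \eqref{remarksign} yields $\Theta_{e_k}\nabla C_\alpha$; taken literally, your two facts would instead give $\Theta_{e_k}\nabla\,\overline{\omega}C_\alpha$. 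Also, because $\M$ is antilinear, keep the factor $t^{\ell-|\alpha|}$ outside $\M_k$ rather than moving it inside.
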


\begin{proof}
We first recall the following identity for $C_\alpha$ from Mellit \cite[Eq.~(22)]{mellit}. 
Recall that $\overline{\omega}$ is the operator acting on $\sym[X]$ that maps \(F[X;q,t] \mapsto F[-X;q^{-1},t^{-1}]\).
\begin{equation}\label{eq:Calpha}
        (-1)^{|\alpha|} C_\alpha(X;t) = t^{\ell-|\alpha|} \overline{\omega} 
        \left((d_-)^\ell y_1^{\alpha_1-1} \dots y_\ell^{\alpha_\ell-1} (d_+)^\ell (1)\right).
\end{equation}

We also note the identity:
\begin{equation}\label{pi0p0}
    \Pisf_0 \PP_0^{-1} (f[X]) = f[-X] = (-1)^{d} \omega f[X],
\end{equation}
for any homogeneous symmetric function $f$ of degree $d$.

Furthermore, \eqref{weylrow} provides:
\[
    \Weyl_1 \dots \Weyl_\ell \big(\Grow_\ell(\hat{\pi}', \Sigma_\pi)\big) = \omega \spa d_-^\ell \chi_\ell(\hat{\pi}', \Sigma_\pi).
\]

Hence, applying the Weyl symmetrization operator $\Weyl_1 \dots \Weyl_\ell$ to the left-hand side of \eqref{e ns delta mod} 
yields the following aligned derivation:
\begin{align}
        &\mathrel{\phantom{=}} \Weyl_1 \dots \Weyl_\ell \Pisf_\ell \spa \alg \nonumber \\[1ex]
        &= \Weyl_1 \dots \Weyl_\ell \Pisf_\ell (-1)^{n} t^{\ell-|\alpha|} \PP_\ell^{-1} \M^\Q_k \left( y_1^{\alpha_1-1} \dots y_\ell^{\alpha_\ell-1} d_+^\ell (1)\right) && \text{(by \eqref{defnalg})} \nonumber \\[1ex]
        &= \Pisf_0 \PP_0^{-1} d_-^\ell \PP_\ell (-1)^{n} t^{\ell-|\alpha|} \PP_\ell^{-1} \M^\Q_k \left( y_1^{\alpha_1-1} \dots y_\ell^{\alpha_\ell-1} d_+^\ell (1)\right) && \text{(by Lemma \ref{useful})} \nonumber \\[1ex]
        &= (-1)^n \omega (-1)^n d_-^\ell t^{\ell-|\alpha|} \M^\Q_k \left( y_1^{\alpha_1-1} \dots y_\ell^{\alpha_\ell-1} d_+^\ell (1)\right) && \text{(by \eqref{pi0p0})} \nonumber \\[1ex]
        &= \omega \M^\Q_k t^{\ell-|\alpha|} \left(d_-^\ell y_1^{\alpha_1-1} \dots y_\ell^{\alpha_\ell-1} d_+^\ell (1)\right) && \text{(by \eqref{eqprenewM})} \nonumber \\[1ex]
        &= \omega \left[\tau_u^{*} \nabla' \overline{\omega} (\tau_{qtu}^{*})^{-1}\right]_k t^{\ell-|\alpha|} \left(d_-^\ell y_1^{\alpha_1-1} \dots y_\ell^{\alpha_\ell-1} d_+^\ell (1)\right) && \text{(by \eqref{defnM})} \nonumber \\[1ex]
        &= \omega \left[\tau_u^{*} \nabla' \tau_{u}^{*} \overline{\omega} \right]_k t^{\ell-|\alpha|} \left(d_-^\ell y_1^{\alpha_1-1} \dots y_\ell^{\alpha_\ell-1} d_+^\ell (1)\right) \nonumber \\[1ex]
        &= \omega (-1)^{|\alpha|} \left[\tau_u^{*} \nabla' \tau_{u}^{*} \overline{\omega} \right]_k (-1)^{|\alpha|} t^{\ell-|\alpha|} \left(d_-^\ell y_1^{\alpha_1-1} \dots y_\ell^{\alpha_\ell-1} d_+^\ell (1)\right) \nonumber \\[1ex]
        &= \omega (-1)^{|\alpha|} \left[\tau_u^{*} \nabla' \tau_{u}^{*}\right]_k C_\alpha(X;t) && \text{(by \eqref{eq:Calpha})} \nonumber \\[1ex]
        &= \omega \Theta_{e_k} \nabla C_\alpha(X;t). && \text{(by \eqref{remarksign})} \label{omegaleft}
\end{align}

Similarly, applying the Weyl symmetrization operator to the right-hand side of \eqref{e ns delta mod} yields:
\begin{align}
        &\mathrel{\phantom{=}} \Weyl_1 \dots \Weyl_\ell  \mathop{\sum_{(\pi, dr)\in \mathrm{D}(\alpha)^{\ast k}}} q^{\area(\pi, dr)} \Grow_\ell(\hat{\pi}', \Sigma_\pi) \nonumber \\[1ex]
        &= \mathop{\sum_{(\pi, dr)\in \mathrm{D}(\alpha)^{\ast k}}} q^{\area(\pi, dr)} \Weyl_1 \dots \Weyl_\ell \Grow_\ell(\hat{\pi}', \Sigma_\pi) \nonumber \\[1ex]
        &= \mathop{\sum_{(\pi, dr)\in \mathrm{D}(\alpha)^{\ast k}}} q^{\area(\pi, dr)} \omega \spa d_-^\ell \chi_\ell(\hat{\pi}', \Sigma_\pi) && \text{(by \eqref{weylrow})} \nonumber \\[1ex]
        &= \omega d_-^\ell \mathop{\sum_{(\pi, dr)\in \mathrm{D}(\alpha)^{\ast k}}} q^{\area(\pi, dr)} \chi_\ell(\hat{\pi}', \Sigma_\pi) \nonumber \\[1ex]
        &= \omega \mathop{\sum_{P\in \LD(n)^{\ast k}}}_{\dcomp(P)=\alpha}q^{\area(P)} t^{\dinv(P)} \xx^{P}. && \text{(by \cite[Remark 6.21]{theta})} \label{omegaright}
\end{align}

Comparing the two identities \eqref{omegaleft} and \eqref{omegaright}, we successfully recover the $\omega$ statement of Theorem \ref{thm compositional delta conjecture}.
\end{proof}

\section{New nonsymmetric formulation of the compositional Delta theorem}
\label{secnnnf}
In this section, we introduce the nonsymmetric variants of the $\nabla$ operator developed by Blasiak et al.\ \cite{nsshuffle}, 
namely the signed nonsymmetric nabla operator $\umnab$ and the nonsymmetric nabla operator $\modnab$. 
We also present the natural nonsymmetric generalizations of the operator $\tau_{u,\ell}^*$, 
denoted by $\overline{\tau_{u,\ell}^{*}}$ and $\underline{\tau_{u,\ell}^{*}}$.

We first provide an equivalent characterization of the element $\alg$ in terms of $\overline{\tau_{u,\ell}^{*}}$ and $\umnab$. 
Then, utilizing the involution $\barwb^\PP$, we establish a second equivalent characterization of $\alg$ in terms of $\modnab$ and $\underline{\tau_{u,\ell}^{*}}$.
This enables us to deduce an alternative version of the nonsymmetric compositional Delta theorem, corresponding to the identity \eqref{eq2} stated in the introduction.

Most notably, we show that applying the Weyl symmetrization operator $\Weyl_1 \dots \Weyl_\ell$ to this nonsymmetric identity 
systematically recovers the original compositional Delta theorem.

\subsection{An operator identity for \texorpdfstring{$\M$}{M}}
In \cite[p.~19]{mellit}, Mellit introduced an endomorphism \(\Mnab \colon V_\ell \to V_\ell\) for all \(\ell \ge 0\), 
which is uniquely determined by the following two properties:

\begin{enumerate}
    \item \(\Mnab(1) = 1\), where \(1 \in V_0\).
    \item \(\Mnab \circ L = N(L) \circ \Mnab\) for all \(L \in \AA_{t,q}\).
\end{enumerate}

Here, $N$ denotes the unique $\mathbb{Q}(q,t)$-algebra endomorphism \(N \colon \AA_{t,q} \to \AA_{t,q}\) 
from \cite[Proposition 3.16]{mellit}, determined by the mapping:
\[
    e_\ell \mapsto e_\ell, \quad T_i \mapsto T_i, \quad d_-  \mapsto d_- , \quad d_+^* \mapsto d_+^*, \quad d_+ \mapsto -(qt)^{-1} z_1 d_+.
\]

Notably, Mellit showed in \cite[p.~19]{mellit} that when restricted to \(V_0 = \sym[X]\), 
this extended $\Mnab$ coincides with the $\nabla'$ operator on symmetric functions defined earlier.
\begin{remark}
    Recall the commutation relation:
    \begin{equation}\label{mnab-1}
        \Mnab^{-1} d_- = d_- \Mnab^{-1}.
    \end{equation}
\end{remark}

\begin{prop}[{\cite[Proposition 3.15]{mellit}}]
There exists an antilinear involution $\bar\omega$ on the Dyck path algebra $\AA_t$, determined by
\[
    e_\ell \mapsto e_\ell, \quad T_i \mapsto T_i^{-1}, \quad d_-  \mapsto d_- , \quad d_+ \mapsto -t^{-\ell} d_+,
\]
where $d_+$ is the operator mapping from $V_\ell$. 

When restricted to $V_0$, this involution coincides with the involution $\overline{\omega}$ defined earlier.
\end{prop}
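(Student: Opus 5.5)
The plan is to check that the proposed assignment on generators respects every defining relation of $\AA_t$. This gives a well-defined antilinear algebra endomorphism. I would then check that it squares to the identity, and finally compare it with $\overline{\omega}$ on $V_0$ through the polynomial representation.

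\emph{Step 1 (well-definedness).} By Carlsson--Mellit \cite[Definition 7.1]{cmellit}, $\AA_t$ is the path algebra of the quiver with vertices $\ell\ge0$, idempotents $e_\ell$, and arrows $T_i,d_\pm$, subject to the following relations:
\begin{itemize}
\item the Hecke relations $(T_i-1)(T_i+t)=0$, together with the braid and commutation relations among the $T_i$;
\item $T_id_-=d_-T_i$ and $d_-^2T_{\ell-1}=d_-^2$;
\item $d_+T_i=T_{i+1}d_+$ and $T_1d_+^2=d_+^2$;
\item the mixed relation $d_-(d_+d_--d_-d_+)T_{\ell-1}=t\,(d_+d_--d_-d_+)d_-$ for $\ell\ge2$.
\end{itemize}
Since the map is antilinear (it sends $t\mapsto t^{-1}$), I must check each relation with $t$ replaced by $t^{-1}$ and each generator replaced by its image.

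For the Hecke relation, the image is $(T_i^{-1}-1)(T_i^{-1}+t^{-1})=0$. This follows by multiplying $(T_i-1)(T_i+t)=0$ by $-t^{-1}T_i^{-2}$. The braid relations are preserved under inversion, and $d_-^2T_{\ell-1}^{-1}=d_-^2$ follows from $d_-^2T_{\ell-1}=d_-^2$.

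For the $d_+$ relations, the scalar $-t^{-\ell}$ depends on the source vertex. In $T_1d_+^2$ starting at $V_\ell$, both sides acquire the same factor $t^{-\ell}t^{-\ell-1}$. The remaining identity $T_1^{-1}d_+^2=d_+^2$ is equivalent to $T_1d_+^2=d_+^2$. The relation $d_+T_i=T_{i+1}d_+$ is handled similarly.

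\emph{Step 2 (the mixed relation).} This is where the powers of $t$ must be tracked carefully, and I expect it to be the main obstacle. Starting from $V_\ell$, both $d_+d_-$ and $d_-d_+$ acquire the same factor $-t^{-\ell+1}$ or $-t^{-\ell}$, depending on where $d_+$ is applied. So I would first rewrite the images as $-t^{-(\ell-1)}d_+d_-$ and $-t^{-\ell}d_-d_+$. This unequal scaling matches the form of the definition of $y_1$ in the paper, which makes it natural to work with $y_1$ itself.

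Concretely, I would express the mixed relation as the statement $y_1d_-=d_-y_2$ (resp.\ its equivalent via $y_{i+1}=tT_i^{-1}y_iT_i^{-1}$). Then I would compute that the assignment sends $y_i$ to a scalar multiple of $y_{\ell+1-i}^{-1}$-type elements, more precisely to $t^{\ast}\,T^{-1}\cdots$ conjugates of $y_1$. Verifying that the images satisfy the same commutation with $d_-$ would then reduce to the inverted Hecke relations. If the direct check becomes unwieldy, my fallback is to use a faithful representation: exhibit an explicit antilinear operator $W$ on $V_*$ with $W(1)=1$, given on $V_0$ by $F[X;q,t]\mapsto F[-X;q^{-1},t^{-1}]$ and extended to $V_\ell$ by a suitable substitution in the $y$'s. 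One would then verify $WT_i=T_i^{-1}W$, $Wd_-=d_-W$, and $Wd_+=-t^{-\ell}d_+W$ directly from the formulas \eqref{defn1}. Conjugation by $W$ then automatically yields the involution, because the representation of $\AA_t$ on $V_*$ is faithful.

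\emph{Step 3 (involution and restriction).} Applying the assignment twice gives $T_i\mapsto T_i$ and $d_-\mapsto d_-$. For $d_+$, it gives $-t^{-\ell}\mapsto -t^{\ell}$ composed with $-t^{-\ell}d_+$, which is $d_+$. So the square is the identity on generators, hence an involution. For the last claim, on $V_0$ the operator $W$ of Step 2 is by construction $\overline{\omega}$. Alternatively, using that $V_0=\sym[X]$ is generated from $1$ by words in $d_\pm$ (closed paths $0\to0$), the action of the involution is determined by $1\mapsto1$, and comparing with $\overline{\omega}$ on these generators finishes the proof.
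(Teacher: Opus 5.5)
The paper quotes this proposition from Mellit without proof, so your argument has to stand on its own. Your Step 1 and the involutivity check are fine. However, the two places with real content are not done.

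\textbf{The mixed relation.} Your reformulation is incorrect. Modulo the other relations, the mixed relation is equivalent to $d_-y_1=y_1d_-$ as maps $V_\ell\to V_{\ell-1}$. It is not equivalent to $y_1d_-=d_-y_2$, which is false already on $V_2$: there $y_1d_-(1)=y_1$ while $d_-(y_2)=-e_1$. Your guess for $\bar\omega(y_i)$ is also off, since $y_j^{-1}$ is not in $\AA_t$. The definition actually gives $\bar\omega(y_1)=\frac{1}{t-1}(t\,d_+d_--d_-d_+)T_{\ell-1}^{-1}\cdots T_1^{-1}$, which on $V_1$ equals $y_1+d_+d_-$.

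The direct check is short once you simplify first. From $d_+T_{\ell-1}^{\pm1}=T_\ell^{\pm1}d_+$ and $d_-^2T_\ell^{\pm1}=d_-^2$ you get $d_-^2d_+T_{\ell-1}^{\pm1}=d_-^2d_+$. So the mixed relation is equivalent to
\[
d_-d_+d_-\,(T_{\ell-1}+t)=t\,d_+d_-^2+d_-^2d_+ .
\]
The three occurrences of $d_+$ here start at $V_{\ell-1}$, $V_{\ell-2}$ and $V_\ell$. Apply $\bar\omega$ and multiply by $-t^{\ell}$. You get the same identity with $T_{\ell-1}+t$ replaced by $tT_{\ell-1}^{-1}+1$, and these agree by the Hecke relation.

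Your fallback cannot replace this computation, for two reasons:
\begin{itemize}
\item It needs $\AA_t$ to act faithfully on all of $V_*$. That is not available; only the isomorphism $\AA_te_0\cong\bigoplus_\ell y_1\cdots y_\ell V_\ell$ is.
\item No substitution in the $y$'s combined with $X\mapsto-X$ can be the right operator. The induced map must send $y_1^2=-y_1d_+(1)$ to $-\bar\omega(y_1)\bar\omega(d_+)(1)=-t\,y_1^2-e_1y_1$. The term $e_1y_1$ cannot come from substituting into $y_1^2$.
\end{itemize}

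\textbf{The identification on $V_0$.} This is likewise only asserted. ``Comparing with $\overline{\omega}$ on generators'' has no content until you know how closed paths act.

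To make it precise, set $W(a\cdot1):=\bar\omega(a)\cdot1$ for $a\in\AA_te_0$. This is well defined because $a\mapsto a\cdot1$ is injective on $\AA_te_0$.

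From the formulas for $d_\pm$, the closed path $d_-y_1^{m-1}d_+$ acts on $V_0$ as multiplication by $(-1)^{m-1}e_m$. We have $\bar\omega(d_+)=-d_+$ on $V_0$ and $\bar\omega(y_1)=y_1+d_+d_-$ on $V_1$. Hence
\[
W(e_mF)=-(-1)^{m-1}\,d_-(y_1+d_+d_-)^{m-1}d_+\,W(F).
\]

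Expanding the word factors it into products $\prod_j d_-y_1^{c_j-1}d_+$ over compositions $c\vDash m$. So this operator is multiplication by $\sum_{c\vDash m}\prod_j(-1)^{c_j-1}e_{c_j}=h_m$. Therefore $W(e_mF)=(-1)^mh_m\,W(F)=e_m[-X]\,W(F)$. Together with $W(1)=1$ and antilinearity, this shows that $W$ is $F[X;q,t]\mapsto F[-X;q^{-1},t^{-1}]$ on $V_0$.
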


\begin{thm}[{\cite[Theorem 3.12]{mellit}}]\label{8.4.2}
There exists a left $\AA_t$-module isomorphism:
\[
    \AA_t e_0 \xrightarrow{\ \cong\ }  \bigoplus_{\ell=0}^{\infty} y_1 \dots y_\ell V_\ell, 
\]
determined by mapping \(e_0 \mapsto 1 \in V_0\).
\end{thm}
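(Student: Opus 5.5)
The natural approach is to define the map directly and show it is bijective by a graded dimension count. By \cite[Proposition 3.2.2]{nsshuffle}, the family $(y_1\cdots y_\ell V_\ell)_{\ell\ge0}$ is stable under the action of $\AA_{t,q}$, in particular under $\AA_t$. So the assignment $\phi(a\,e_0):=a\cdot 1$ for $a\in\AA_t$ gives a well-defined left $\AA_t$-module homomorphism $\AA_t e_0\to\bigoplus_\ell y_1\cdots y_\ell V_\ell$. Well-definedness only requires that the annihilating relations defining the cyclic module $\AA_t e_0$ (namely $e_0$ projects onto the $\ell=0$ component) hold for $1\in V_0$, and this is clear. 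It remains to prove that $\phi$ is surjective and injective. Both spaces carry a bigrading by $\ell$ and by total degree (in $y$'s and $X$), and $\phi$ respects it, so I will work one graded piece at a time.

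For surjectivity, I first treat $\ell=0$. The elements $d_-^m y_1^{\beta_1-1}\cdots y_m^{\beta_m-1}d_+^m(1)$, with $\beta$ ranging over compositions, are up to sign and powers of $t$ the functions $\overline{\omega}C_\beta$ by \eqref{eq:Calpha}. These span $\sym[X]$, since the $C_\beta$ are essentially Hall--Littlewood-type functions spanning each degree. For $\ell>0$, I claim that $y_1\cdots y_\ell V_\ell$ is spanned by elements $y^\eta d_+^{\ell}\cdots$ obtained by inserting $d_+$'s and $y$'s. Concretely, I will use the formula $d_+F=-T_1\cdots T_\ell\bigl(y_{\ell+1}F[\ldots;X+(t-1)y_{\ell+1}]\bigr)$ together with the invertibility of the $T_i$. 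Inductively, from $F\in V_\ell$ one reaches $y_{\ell+1}F+(\text{terms of lower $X$-degree})$, and multiplying by the $y_i$ then generates all of $y_1\cdots y_{\ell+1}\QQ(q,t)[y]\otimes\sym[X]$. This induction is triangular with respect to $X$-degree, and the $\ell=0$ case supplies the base.

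For injectivity, I will produce a spanning set of each graded piece of $\AA_t e_0$ whose cardinality is at most the dimension of the corresponding piece of $y_1\cdots y_\ell V_\ell$. That piece has dimension $\sum_{j}\#\{\eta\in\ZZ_{\ge1}^\ell\}\cdot\#\{\lambda\vdash j\}$ with the appropriate degree bookkeeping. Using the defining relations of $\AA_t$ (the $T_i$ relations, $d_-^2T_{\ell-1}=d_-^2$, $d_+T_i=T_{i+1}d_+$, $T_1d_+^2=d_+^2$, and the relation for $y_1$ via $[d_+,d_-]$), I will normal-order any word applied to $e_0$ into the form $y^{\eta}d_+^{\ell}\,b\,e_0$. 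Here $b\in e_0\AA_te_0$ is spanned by the words $d_-^m y^{\beta-1}d_+^m$, whose span in each degree has dimension at most the number of partitions, as the $\ell=0$ count shows. Since $\phi$ is surjective and the dimensions agree, $\phi$ is an isomorphism on each graded piece.

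The main obstacle is this normal-ordering step. One must show that the $T_i$ can be absorbed, since $y_{i+1}=tT_i^{-1}y_iT_i^{-1}$ and $T_i$ acts on $d_+^\ell e_0$ by scalars by the relations. One must also show that every interleaving of $d_-$ and $d_+$ reduces to the claimed form without increasing the count. I would try to import Mellit's proof \cite[Theorem 3.12]{mellit} verbatim, after translating through $\Q_\ell$ and \eqref{CMM}, because our normalizations differ from his only by signs and that conjugation.
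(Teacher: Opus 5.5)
The paper does not prove this theorem; it quotes it from Mellit. So the question is whether your argument stands on its own. Surjectivity does, but injectivity, which is the real content, does not.

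Your surjectivity half works. The substitution $p_k\mapsto p_k+(t^k-1)y_{\ell+1}^k$ is a $\QQ(q,t)[y]$-linear automorphism of $V_{\ell+1}$. Combined with $T_\ell^{-1}\cdots T_1^{-1}d_+G=-y_{\ell+1}G[X+(t-1)y_{\ell+1}]$ and closure under the $y_i$, this gives all of $y_1\cdots y_{\ell+1}V_{\ell+1}$. The base case $\ell=0$ follows from the $C_\beta$ spanning $\sym[X]$.

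The gap is the bound $\dim(e_0\AA_te_0)_j\le p(j)$, which you justify ``as the $\ell=0$ count shows.'' That count takes place in $V_0$. It shows that the \emph{images} of the words $d_-^m y^{\beta-1}d_+^m$ span a space of dimension $p(j)$, which is a lower bound on their span in $\AA_t$, not an upper bound. The upper bound is exactly the $\ell=0$ case of the injectivity you are proving, so the argument is circular. Your spanning set has one word for each composition $\beta\vDash j$, i.e.\ $2^{j-1}$ words, which exceeds $p(j)$ once $j\ge 3$. You would have to derive linear relations among these words inside $\AA_t$ from its defining relations, and the proposal does not do this. The normal-ordering claim $e_\ell\AA_te_0=\sum_\eta y^\eta d_+^\ell\,(e_0\AA_te_0)$ has the same status. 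Absorbing the $T_i$ is routine. The nontrivial step is rewriting $d_-\,y_\ell^{c}\,d_+$ using the commutator relation that defines $y_1$ together with $y_{i+1}=tT_i^{-1}y_iT_i^{-1}$. Proposing to import Mellit's proof there amounts to assuming the theorem.

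What is missing is an argument internal to $\AA_t$. One possible route:
\begin{itemize}
\item Set $E_a:=d_-y_1^{a-1}d_+\in e_0\AA_te_0$. From the explicit formulas, $E_a$ acts on $V_0$ as multiplication by $(-1)^{a+1}$ times the elementary symmetric function $e_a$.
\item Prove from the relations that $e_0\AA_te_0$ is generated by the $E_a$ and that they commute in $\AA_t$. Commutativity as operators on $V_0$ is automatic, so it proves nothing.
\item Then $(e_0\AA_te_0)_j$ is spanned by the $p(j)$ monomials $E_\lambda$ with $\lambda\vdash j$, and their images $\pm e_\lambda$ are linearly independent.
\item Combine this with a genuine proof of the normal form. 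Also use that $y^\eta\otimes f\mapsto y^\eta d_+^\ell f=(-1)^\ell y^\eta y_1\cdots y_\ell\, f[X+(t-1)(y_1+\cdots+y_\ell)]$ is a bijection $\QQ(q,t)[y_1,\dots,y_\ell]\otimes V_0\to y_1\cdots y_\ell V_\ell$. This gives the dimension match you need.
\end{itemize}
As written, the proposal only shows that the map is onto.
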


Motivated by the above two results, Blasiak et al.\ \cite[p.~39]{nsshuffle} 
defined an involution $\barwb$ on \(\bigoplus_{\ell=0}^{\infty} y_1 \dots y_\ell V_\ell\), which is uniquely determined by the following two properties:
\begin{enumerate}
    \item \(\barwb(1) = 1\), where \(1 \in V_0\).
    \item \(\barwb \circ  L = \bar\omega(L) \circ \barwb\) for all \(L \in \AA_{t}\).
\end{enumerate}

We also recall the following relation established in \cite{nsshuffle}.
\begin{prop}[{\cite[Lemma 8.4.4]{nsshuffle}}]\label{8.4.4}
   Let \(\N^\Q := \Q_\ell^{-1} \N \Q_\ell \colon y_1\dots y_\ell V_\ell \to y_1\dots y_\ell V_\ell\). The following identity holds on \((y_1 \dots y_\ell V_\ell)_{\ell \ge 0}\):
    \[
        \N^\Q = \Mnab\barwb = \barwb \Mnab^{-1}.
    \]
\end{prop}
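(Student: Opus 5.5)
We prove $\N^\Q = \Mnab\barwb = \barwb\Mnab^{-1}$ by the uniqueness principle coming from Theorem~\ref{8.4.2}. The module $\bigoplus_{\ell} y_1\cdots y_\ell V_\ell$ is cyclic over $\AA_t$, generated by $1\in V_0$. Hence any two operators on it that agree at $1$ and intertwine the generators $T_i, d_-, d_+$ in the same way must coincide. The plan is to show that each of $\N^\Q$, $\Mnab\barwb$ and $\barwb\Mnab^{-1}$ satisfies the same normalization and the same twisted commutation relations with these generators. The normalizations are immediate: $\N(1)=1$, $\barwb(1)=1$, $\Mnab(1)=1$ and $\Q_0=\mathrm{id}$.

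First we compute the relations for $\N^\Q$. Conjugating the relations of \cite[Theorem 7.4]{cmellit} by $\Q_\ell$ via \eqref{CMM}, as in the proof of Proposition~\ref{prenewM}, gives
\[
\N^\Q T_i = T_i^{-1}\N^\Q,\qquad \N^\Q d_- = d_-\N^\Q,\qquad \N^\Q d_+ = -y_1 d_+^*\N^\Q .
\]
Note that $\N$ is antilinear, so scalars are conjugated under $q\mapsto q^{-1}$, $t\mapsto t^{-1}$.

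Next we compute the relations for $\Mnab\barwb$. By the defining property of $\barwb$ and of the involution $\bar\omega$,
\[
\barwb T_i = T_i^{-1}\barwb,\qquad \barwb d_- = d_-\barwb,\qquad \barwb d_+ = -t^{-\ell}d_+\barwb .
\]
By Mellit's endomorphism $N$, the operator $\Mnab$ commutes with $T_i^{\pm1}$ and with $d_-$, and satisfies $\Mnab d_+ = -(qt)^{-1}z_1 d_+\Mnab$. Combining these,
\[
\Mnab\barwb d_+ = -t^{-\ell}\cdot\bigl(-(qt)^{-1}\bigr)z_1 d_+\,\Mnab\barwb .
\]
The relation \eqref{relation}, $z_1d_+ = -qt^{\ell+1}y_1d_+^*$, rewrites the right-hand side as $-y_1d_+^*\,\Mnab\barwb$, which matches the relation for $\N^\Q$ exactly. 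The relations for $T_i$ and $d_-$ also match.

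Because $\barwb$ is antilinear, one must track where the scalar $t^{-\ell}$ appears relative to $\barwb$. The main obstacle is precisely this bookkeeping: the antilinear conjugation of scalars, the exact power of $t$ tied to the level $\ell$ on which $d_+$ acts, and the sign conventions between \eqref{CMM} and Mellit's $\bar\omega$. I expect the $q,t$ powers to cancel exactly via \eqref{relation}, but this is the step to check most carefully, including whether \eqref{relation} is applied at level $\ell$ or $\ell+1$.

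For $\barwb\Mnab^{-1}$, one route is to repeat the computation, using $\Mnab^{-1}d_+ = -qt\, z_1^{-1}\cdots$. A cleaner route is to note that $\N$ is an involution, so $\N^\Q = (\N^\Q)^{-1} = (\Mnab\barwb)^{-1} = \barwb^{-1}\Mnab^{-1} = \barwb\Mnab^{-1}$, using that $\barwb$ is an involution; this avoids a second computation. Finally, the intertwining relations and the normalization are transported along the cyclic generation $\AA_t\cdot 1$ of Theorem~\ref{8.4.2}, which gives equality on all of $(y_1\cdots y_\ell V_\ell)_{\ell\ge0}$.
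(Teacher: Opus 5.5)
Your proof is correct. The normalizations check out, and so do the three intertwining relations. In particular, $(-t^{-\ell})\bigl(-(qt)^{-1}\bigr)(-qt^{\ell+1})=-1$ with \eqref{relation} applied to $d_+\colon V_\ell\to V_{\ell+1}$, and no scalar ever has to pass through the antilinear $\barwb$, so your bookkeeping worry resolves cleanly. The involution trick for $\barwb\Mnab^{-1}$ also works; just note that invertibility of $\Mnab$ on the submodule follows from $\Mnab=\N^\Q\barwb$.

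The paper gives no proof of this statement; it imports it from \cite[Lemma 8.4.4]{nsshuffle}. Your cyclicity-plus-intertwining argument is, however, the same method the paper uses in Proposition~\ref{newM}, whose $d_+$ computation at $u=0$ is exactly yours.
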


We now present a new operator identity for $\M^\Q$. We first note that
\[
    \M = \tau_{u,\ell}^* \N (\tau_{qtu,\ell}^*)^{-1} \quad \text{and} \quad \Q_\ell^{-1} \tau_{u,\ell}^* \Q_\ell = \tau_{u,\ell}^*.
\]

It follows that
\begin{equation}\label{defM}
    \M^\Q = \tau_{u,\ell}^* \N^\Q \bigl(\tau_{qtu,\ell}^*\bigr)^{-1} = \tau_{u,\ell}^* \Mnab\barwb \bigl(\tau_{qtu,\ell}^*\bigr)^{-1}.
\end{equation}







\begin{prop}\label{newM}
    We have the following operator identity on \((y_1 \dots y_\ell V_\ell)_{\ell \ge 0}\):
    \begin{equation}
        \M^\Q = \tau_{u,\ell}^* \Mnab \tau_{u,\ell}^* \barwb.
    \end{equation}
\end{prop}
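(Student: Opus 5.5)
The identity will follow from \eqref{defM} once we prove the conjugation relation
\[
\barwb\,\bigl(\tau_{qtu,\ell}^*\bigr)^{-1} \;=\; \tau_{u,\ell}^*\,\barwb \qquad\text{on } y_1\cdots y_\ell V_\ell,
\]
where $u$ is a formal variable fixed by the antilinear map $\barwb$, which acts only on $q,t$. Equivalently, the plan is to show that the operator
\[
A \;:=\; \barwb\,\tau_{u,\ell}^*\,\barwb\;\tau_{qtu,\ell}^*
\]
is the identity on $\bigoplus_\ell y_1\cdots y_\ell V_\ell$, working coefficientwise in $u$. Substituting into \eqref{defM} then gives $\M^\Q=\tau_{u,\ell}^*\Mnab\tau_{u,\ell}^*\barwb$.

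\emph{Step 1: $A$ commutes with $\AA_t$.} Let $L$ be any of the generators $d_-$, $d_+$, $T_i$ (or $e_\ell$) of $\AA_t$.
\begin{itemize}
\item By Proposition~\ref{propnewtau}, $\tau^*_{v,\cdot}$ commutes with $L$ for any formal parameter $v$, with the level index shifting appropriately; for $d_-$ this is also \eqref{eqtau-1}.
\item By the defining property of $\barwb$, we have $\barwb L=\bar\omega(L)\barwb$ and $L\barwb=\barwb\,\bar\omega(L)$, and $\bar\omega(L)$ is again a scalar multiple of a generator or its inverse.
\end{itemize}
Hence
\[
A L \;=\; \barwb\,\tau_u^*\,\barwb\,\tau_{qtu}^*L \;=\; \barwb\,\tau_u^*\,\barwb\,L\,\tau_{qtu}^* \;=\; \barwb\,\tau_u^*\,\bar\omega(L)\,\barwb\,\tau_{qtu}^* \;=\; \barwb\,\bar\omega(L)\,\tau_u^*\,\barwb\,\tau_{qtu}^* \;=\; L A .
\]
The scalars $-t^{-\ell}$ appearing in $\bar\omega(d_+)$ do not obstruct this, since they pass through the $\tau$'s, which are linear. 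They are then restored by the antilinear $\barwb$, because $\bar\omega$ is an involution.

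\emph{Step 2: $A(1)=1$ on $V_0$.} On $V_0$, $\barwb=\overline{\omega}$ and $\tau_{u,0}^*=\sum_n(-u)^n\e_n[X/M]$. Since $\overline\omega$ is multiplicative, conjugating multiplication by $e_n[X/M]$ by $\overline\omega$ gives multiplication by
\[
e_n\!\left[\frac{-X}{(1-q^{-1})(1-t^{-1})}\right] \;=\; e_n\!\left[\frac{-qtX}{M}\right] \;=\; (-qt)^n h_n[X/M].
\]
Therefore
\[
\barwb\,\tau_{u,0}^*\,\barwb \;=\; \sum_n (qtu)^n\,\underline h_n^* \;=\; \bigl(\tau_{qtu}^*\bigr)^{-1},
\]
and so $A=\mathrm{id}$ on $V_0$, in particular $A(1)=1$.

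\emph{Step 3: conclusion.} By Theorem~\ref{8.4.2}, every element of $\bigoplus_\ell y_1\cdots y_\ell V_\ell$ has the form $L(1)$ with $L\in\AA_t$. Steps 1 and 2 then give $A(L(1))=L(A(1))=L(1)$, so $A$ is the identity. This yields the conjugation relation and hence the proposition.

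The main obstacle is Step 1, in particular the bookkeeping of antilinearity. One must check that $\barwb\,\tau_{u,\ell}^*\,\barwb$ is again a linear operator commuting with $\AA_t$, keeping track of the level $\ell$ in $\bar\omega(d_+)=-t^{-\ell}d_+$, and verify that $u$ is genuinely treated as fixed by $\barwb$. If this proves awkward, a fallback is a direct computation on $V_\ell$: check how $\barwb$ acts on multiplication by $y_i$, namely via $\bar\omega(y_i)$ expressed through the defining formula of $y_1$, and then verify
\[
\barwb\; e_n\!\left[\frac{X+(t-1)\sum_i y_i}{M}\right]\barwb \;=\; (-qt)^n\, h_n\!\left[\frac{X+(t-1)\sum_i y_i}{M}\right].
\]
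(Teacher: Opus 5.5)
Your proof is correct, but it reverses the paper's logic.

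The paper checks that $\tau_{u,\ell}^*\Mnab\tau_{u,\ell}^*\barwb$ satisfies the same three intertwining relations as $\M^\Q$ in Proposition~\ref{prenewM}. The $d_+$ case there needs the heavier relations $\Mnab d_+ = (-qt)^{-1}z_1d_+\Mnab$, $z_1d_+ = -qt^{\ell+1}y_1d_+^*$ and $d_+^*\tau^*_{u,\ell} = (1-uy_1)\tau^*_{u,\ell+1}d_+^*$. Only afterwards, in Corollary~\ref{imcor}, does the paper extract the $\Mnab$-free relation $\barwb(\tau^*_{qtu,\ell})^{-1} = \tau^*_{u,\ell}\barwb$, by cancelling against \eqref{defM}.

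You prove that relation first. You use only two facts: the $\tau$'s are linear and commute with $d_\pm, T_i$, and $\barwb$ twists the generators by $\bar\omega$. Together these make $\barwb\tau^*_{u,\ell}\barwb\tau^*_{qtu,\ell}$ an $\AA_t$-endomorphism of the cyclic module $\bigoplus_\ell y_1\cdots y_\ell V_\ell$, so it is determined by its value on $1$. Your $V_0$ computation $\overline{\omega}\,\underline{e}_n^*\,\overline{\omega} = (-qt)^n\underline{h}_n^*$ is right, and the proposition then follows at once from \eqref{defM}.

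What each route buys:
\begin{itemize}
\item Yours is shorter and never uses $z_1$, $d_+^*$, or the twisted $d_+$-intertwiner of $\Mnab$.
\item Yours makes the base case explicit. The paper's ``identical intertwining relations'' argument also tacitly needs both sides to agree on $1\in V_0$, which it never checks. It does hold, by essentially your Step~2 computation.
\item The paper's route, in exchange, shows directly that the new factorization obeys the D'Adderio--Mellit intertwining relations that characterize $\M$.
\end{itemize}

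In your write-up, state explicitly that $A$ is $\mathbb{Q}(q,t)$-linear, being a composite of two antilinear maps with linear ones. Also state that all identities are taken coefficientwise in $u$, with $u$ fixed by $\barwb$. These two points are what let commutation with the generators extend to all of $\AA_t$.
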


\begin{proof}
    Since the operators $d_+$, $d_-$, and $T_i$ generate $\AA_t$, 
    it follows from Theorem \ref{8.4.2} that they also generate the space \(\bigoplus_{r \ge 0} y_1\dots y_\ell V_\ell\) from the element \(1 \in V_0\). 
    Hence, it suffices to verify that $\M^\Q$ and $\tau_{u,\ell}^* \Mnab \tau_{u,\ell}^* \barwb$ satisfy identical intertwining relations.

    We first verify the intertwining relation for $T_i$. Recall that:
    \[
        \barwb T_i = T_i^{-1} \barwb, \quad \tau_{u,\ell}^* T_i = T_i \tau_{u,\ell}^*, \quad \Mnab T_i = T_i \Mnab.
    \]
    A direct computation gives:
    \[
        \tau_{u,\ell}^* \Mnab \tau_{u,\ell}^* \barwb T_i = \tau_{u,\ell}^* \Mnab \tau_{u,\ell}^* T_i^{-1} \barwb = 
        \tau_{u,\ell}^* \Mnab T_i^{-1} \tau_{u,\ell}^* \barwb = \tau_{u,\ell}^* T_i^{-1} \Mnab \tau_{u,\ell}^* \barwb = T_i^{-1} \tau_{u,\ell}^* \Mnab \tau_{u,\ell}^* \barwb.
    \]
   
    For $d_-$, we recall the relations:
    \[
        \barwb d_- = d_- \barwb, \quad \tau_{u,\ell-1}^* d_- = d_- \tau_{u,\ell}^*,\quad \Mnab d_- = d_- \Mnab,
    \]
    from which we compute:
    \[
        \tau_{u,\ell-1}^* \Mnab \tau_{u,\ell-1}^* \barwb d_- = \tau_{u,\ell-1}^* \Mnab \tau_{u,\ell-1}^* d_-\barwb = \tau_{u,\ell-1}^* \Mnab d_- \tau_{u,\ell}^* \barwb 
        = \tau_{u,\ell-1}^* d_- \Mnab \tau_{u,\ell}^* \barwb = d_- \tau_{u,\ell}^* \Mnab \tau_{u,\ell}^* \barwb.
    \]

    The verification for $d_+$ is analogous. Recall that:
    \[
        \barwb d_+ = -t^{-\ell} d_+ \barwb, \quad \tau_{u,\ell+1}^* d_+ = d_+ \tau_{u,\ell}^*, \quad \Mnab d_+ = (-qt)^{-1} z_1 d_+ \Mnab,
    \]
    \[
        \tau_{u,\ell}^* y_i  = y_i \tau_{u,\ell}^* ,\quad  
        d^{*}_+ \tau^*_{u,\ell}  = (1-u y_1) \tau^*_{u,\ell+1} d^*_+,
    \]
    Combining these with the relation \(z_1 d_+ = -q t^{\ell+1} y_1 d_+^*\) stated in \eqref{relation}, we obtain:
    \begin{align*}
        \tau_{u,\ell+1}^* \Mnab \tau_{u,\ell+1}^* \barwb d_+ &= \tau_{u,\ell+1}^* \Mnab \tau_{u,\ell+1}^* \left(-t^{-\ell} d_+\right) \barwb = \tau_{u,\ell+1}^* \Mnab \left(-t^{-\ell} d_+\right) \tau_{u,\ell}^* \barwb \\
        &= \tau_{u,\ell+1}^* \left((-qt)^{-1} (-t^{-\ell}) z_1 d_+\right) \Mnab \tau_{u,\ell}^* \barwb = \tau_{u,\ell+1}^* \left(\cfrac{1}{qt^{\ell+1}} z_1 d_+\right) \Mnab \tau_{u,\ell}^* \barwb\\
        &= - \tau_{u,\ell+1}^* y_1 d_+^* \Mnab \tau_{u,\ell}^* \barwb = 
        - y_1 \tau_{u,\ell+1}^* d_+^* \Mnab \tau_{u,\ell}^* \barwb = \cfrac{-y_1}{1-uy_1} d_+^{*} \tau_{u,\ell}^{*} \Mnab \tau_{u,\ell}^* \barwb.
    \end{align*}

    Comparing these intertwining relations with those of $\M^\Q$ established in Proposition \ref{prenewM}, the proposition follows.
\end{proof}

\begin{cor}\label{imcor}
The operator $\M^\Q$ admits the following equivalent factorization:
\[
    \M^\Q = \barwb \left(\tau_{qtu,\ell}^*\right)^{-1} \Mnab^{-1} \left(\tau_{qtu,\ell}^*\right)^{-1}.
\]
In particular, 
\begin{equation}\label{newmqk}
    \M^\Q_k = \barwb \left[\left(\tau_{qtu,\ell}^*\right)^{-1} \Mnab^{-1} \left(\tau_{qtu,\ell}^*\right)^{-1}\right]_k.
\end{equation}
\end{cor}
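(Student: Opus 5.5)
The plan is to start from Proposition~\ref{newM}, which gives $\M^\Q = \tau_{u,\ell}^* \Mnab \tau_{u,\ell}^* \barwb$, and push $\barwb$ to the front. Three facts are needed. First, $\barwb$ is an involution. Second, by Proposition~\ref{8.4.4}, $\Mnab\barwb = \barwb \Mnab^{-1}$, equivalently $\barwb\Mnab^{-1}\barwb = \Mnab$. Third, we need a commutation rule between $\barwb$ and $\tau_{u,\ell}^*$. Since $\barwb$ is antilinear ($q\mapsto q^{-1}$, $t\mapsto t^{-1}$) and acts on the symmetric part like $\overline{\omega}$, the expected rule is
\[
\barwb\, \tau_{u,\ell}^* = \left(\tau_{qtu,\ell}^*\right)^{-1} \barwb ,
\]
with $u$ treated as a linear (fixed) parameter. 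This mirrors the step $\overline{\omega}(\tau^*_{qtu})^{-1} = \tau_u^*\overline{\omega}$ already used in the proof of Proposition~\ref{prop:recover_omega}.

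Granting these, the computation is immediate:
\[
\M^\Q = \tau_{u,\ell}^* \Mnab \tau_{u,\ell}^* \barwb
= \barwb\, \barwb\tau_{u,\ell}^* \Mnab \tau_{u,\ell}^* \barwb
= \barwb \left(\tau_{qtu,\ell}^*\right)^{-1} \barwb\Mnab\barwb \left(\tau_{qtu,\ell}^*\right)^{-1},
\]
and $\barwb\Mnab\barwb = \Mnab^{-1}$. Taking $(-1)^k\langle u^k\rangle$ of both sides gives \eqref{newmqk}. This step uses that $\barwb$ does not touch $u$, so it commutes with extracting the coefficient of $u^k$.

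The main obstacle is proving the commutation rule, since $\barwb$ is defined only abstractly through its intertwining with $\AA_t$. I would prove it the same way Proposition~\ref{newM} was proved: compare intertwining relations on generators. Both $\barwb\tau_{u,\ell}^*\barwb$ and $(\tau^*_{qtu,\ell})^{-1}$ fix $1\in V_0$. Conjugating the relations of Proposition~\ref{propnewtau} by $\barwb$ shows that both operators commute with $T_i$, $d_-$ and $d_+$. Here one uses that $\barwb$ maps $T_i\mapsto T_i^{-1}$, $d_-\mapsto d_-$ and $d_+\mapsto -t^{-\ell}d_+$, and that scalars in $q,t$ are inverted. Theorem~\ref{8.4.2} then shows they agree on $\bigoplus_\ell y_1\cdots y_\ell V_\ell$.

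There is one gap in this route. Commuting with the generators of $\AA_t$ determines an operator only up to its value on $1$, and it is compatible with any operator $\tau_{v,\ell}^*$. The parameter rescaling $u\mapsto qtu$ is therefore not visible from the $\AA_t$ relations alone. It has to come from the $V_0$ restriction, where $\barwb$ coincides with $\overline{\omega}$ and $\tau^*_u$ is multiplication by $\sum(-u)^n e_n[X/M]$. One checks there that
\[
\overline{\omega}\, e_n[X/M] = e_n\!\left[-X/((1-q^{-1})(1-t^{-1}))\right] = (-qt)^n h_n[X/M]\,\text{-type terms},
\]
which yields $(\tau^*_{qtu})^{-1}$ after matching signs.

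To make this precise on general $V_\ell$, I would instead use the explicit formula \eqref{deftauell}, which involves $X+(t-1)\sum y_i$. Then $\barwb\tau^*_{u,\ell}\barwb$ is multiplication by the $\barwb$-image of that series. I would handle it by checking that $\barwb$ acts on the relevant plethystic multiplier as $X+(t-1)\sum y_i \mapsto -(X+(t-1)\sum y_i)$ with $t,q$ inverted. That holds on $V_0$, and it is propagated to $V_\ell$ using $\tau^*_{u,\ell+1}d_+=d_+\tau^*_{u,\ell}$. Indeed, multiplication operators of this form are determined by their commutation with $d_+$ and their value on $V_0$.
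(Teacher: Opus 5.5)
\textbf{Assessment.} Your algebraic skeleton is the paper's. It consists of Proposition~\ref{newM}, the relation $\Mnab\barwb=\barwb\Mnab^{-1}$ from Proposition~\ref{8.4.4}, and the commutation rule $\barwb\,\tau^*_{u,\ell}=(\tau^*_{qtu,\ell})^{-1}\barwb$. That rule is correct, and via $\barwb^2=\mathrm{id}$ it is equivalent to the paper's $\barwb(\tau^*_{qtu,\ell})^{-1}=\tau^*_{u,\ell}\barwb$. Your manipulation and your remark about extracting $(-1)^k\langle u^k\rangle$ are fine.

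\textbf{The gap.} What fails is your justification of the commutation rule. First, the claim that $\barwb\tau^*_{u,\ell}\barwb$ and $(\tau^*_{qtu,\ell})^{-1}$ both fix $1$ is false. Since $\tau^*_{u,0}$ is multiplication by $\sum_n(-u)^ne_n[X/M]$, both operators actually send $1$ to $\sum_n(qtu)^nh_n[X/M]$. Second, the patch in your last paragraph is unsupported. It treats $\barwb$ on $V_\ell$ as a plethystic substitution acting on the multiplier $e_n[(X+(t-1)\sum_i y_i)/M]$. But for $\ell\ge 1$, $\barwb$ is characterized only by its intertwining with $\AA_t$ on $\bigoplus_\ell y_1\cdots y_\ell V_\ell$, and it is not a ring map. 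For $\ell=1$ one computes $\barwb(y_1)=-y_1$ but $\barwb(y_1^2)=-t\,y_1^2-y_1p_1[X]$. So you cannot conclude that $\barwb\tau^*_{u,\ell}\barwb$ is multiplication by an ``image'' series. Your closing sentence about multiplication operators just restates the intertwining argument.

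\textbf{The paper's route.} No independent proof of the commutation rule is needed. \eqref{defM}, which uses $\N^\Q=\Mnab\barwb$, gives $\M^\Q=\tau^*_{u,\ell}\Mnab\barwb(\tau^*_{qtu,\ell})^{-1}$. Proposition~\ref{newM} gives $\M^\Q=\tau^*_{u,\ell}\Mnab\tau^*_{u,\ell}\barwb$. Cancelling the invertible left factor $\tau^*_{u,\ell}\Mnab$ yields $\barwb(\tau^*_{qtu,\ell})^{-1}=\tau^*_{u,\ell}\barwb$ at once. Then
\[
\M^\Q=\tau^*_{u,\ell}\Mnab\barwb(\tau^*_{qtu,\ell})^{-1}=\tau^*_{u,\ell}\barwb\Mnab^{-1}(\tau^*_{qtu,\ell})^{-1}=\barwb(\tau^*_{qtu,\ell})^{-1}\Mnab^{-1}(\tau^*_{qtu,\ell})^{-1},
\]
which does not even use $\barwb^2=\mathrm{id}$.

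\textbf{Repairing your route.} Your intertwining argument also closes without the last paragraph. Set $A:=\barwb\tau^*_{u,\ell}\barwb$ and $B:=(\tau^*_{qtu,\ell})^{-1}$; your conjugation check correctly shows both commute with $T_i$ and $d_\pm$. Your $V_0$ computation is exact: with $\barwb|_{V_0}=\overline{\omega}$, one has $\overline{\omega}\,e_n[X/M]=e_n[-qtX/M]=(-qt)^nh_n[X/M]$, so $A(1)=B(1)$. Hence $A(L\cdot 1)=L\,A(1)=L\,B(1)=B(L\cdot 1)$ for every $L\in\AA_t$, and Theorem~\ref{8.4.2} gives $A=B$.
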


\begin{proof}
It is clear that both $\tau_{u,\ell}^*$ and $\Mnab$ are invertible. 
Combining this with Definition \eqref{defM} of $\M^\Q$ and Proposition \ref{newM}, we first deduce the commutation relation:
\[
    \barwb \left(\tau_{qtu,\ell}^*\right)^{-1} = \tau_{u,\ell}^* \barwb.
\]

Recall that the original definition of $\M^\Q$ is given by:
\[
    \M^\Q = \tau_{u,\ell}^* \Mnab \barwb \left(\tau_{qtu,\ell}^*\right)^{-1}.
\]

Invoking the identity \(\Mnab \barwb = \barwb \Mnab^{-1}\) from Blasiak et al.\ \cite[Lemma 8.4.4]{nsshuffle}, 
we can rewrite the product $\tau_u^* \Mnab \barwb$ as:
\[
    \tau_{u,\ell}^* \Mnab \barwb = \tau_{u,\ell}^* \barwb \Mnab^{-1} = \barwb \left(\tau_{qtu,\ell}^*\right)^{-1} \Mnab^{-1},
\]
where the second equality follows directly from the commutation relation we just derived. 
Substituting this back into the original definition of $\M^\Q$ immediately yields the desired factorization, 
completing the proof.
\end{proof}

\subsection{New formulation of the nonsymmetric compositional Delta theorem}
The operator $\Mnab$ is originally defined on $V_\ell$. 
Conjugating $\Mnab$ via the isomorphism \eqref{isopr} yields an operator acting on $\P(\ell)$. 
We denote this conjugated operator by $\Mnab^\PP$, which is explicitly given by:
\[
    \Mnab^\PP := \PP_\ell^{-1} \Mnab \PP_\ell.
\]

Furthermore, utilizing the $\ell$-nonsymmetric plethysm map $\Pisf_\ell$, 
we can also conjugate $\Mnab^\PP$ to define another operator acting on $\P(\ell)$. This operator is given by:
\[
    \Pisf_\ell \Mnab^\PP \Pisf^{-1}_\ell = \Pisf_\ell \PP_\ell^{-1} \Mnab \PP_\ell \Pisf^{-1}_\ell.
\]

In fact, these two operators admit a highly elegant explicit characterization on $\P(\ell)$, 
as established in \cite[Theorem 8.3.1]{nsshuffle}.

Recall that in Subsection~\ref{nonmac}, we introduced two bases for $\P(\ell)$: the integral form $\stE_{\eta|\lambda}$ and the modified form $\tE_{\eta|\lambda}$, corresponding to the symmetric Macdonald bases $J_\mu(X;q,t)$ and $\Htild_\mu(X;q,t)$, respectively.

Using these two bases, Blasiak et al.\ defined two variants of the nabla operator on $\P(\ell)$ in \cite[Section 8.1]{nsshuffle}: 
the signed nonsymmetric nabla operator $\umnab$ and the nonsymmetric nabla operator $\modnab$, 
which are given by:
\begin{equation}\label{141}
    \umnab \stE_{\eta| \lambda}(X ;q,t) := (-1)^{|\mu|} T_\mu \stE_{\eta| \lambda}(X ;q,t), \qquad \text{where } \mu = (\eta; \lambda)_+,
\end{equation}
\begin{equation}\label{142}
    \modnab \tE_{\eta| \lambda}(X ;q,t) := (-1)^{|\mu|} T_\mu \tE_{\eta| \lambda}(X ;q,t), \qquad \text{where } \mu = (\eta; \lambda)_+.
\end{equation}

\begin{thm}[{\cite[Theorem 8.3.1]{nsshuffle}}]
We have the following operator identities on $\P(\ell)$:
\begin{equation}\label{eq831}
    \Mnab^\PP = \umnab, \qquad \Pisf_\ell \Mnab^\PP \Pisf^{-1}_\ell = \modnab.
\end{equation}
\end{thm}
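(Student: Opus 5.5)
The second identity in \eqref{eq831} is a formal consequence of the first, so I would prove $\Mnab^\PP = \umnab$ and then conjugate. Indeed, by \eqref{pimac} we have $\tE_{\eta|\lambda} = \Pisf_\ell \stE_{\eta|\lambda}$, and \eqref{141}, \eqref{142} assign the same eigenvalue $(-1)^{|\mu|}T_\mu$, with $\mu=(\eta;\lambda)_+$, to both bases. Hence $\modnab = \Pisf_\ell \umnab \Pisf_\ell^{-1}$ as operators on $\P(\ell)$, and so $\Pisf_\ell \Mnab^\PP \Pisf_\ell^{-1} = \modnab$ once the first identity is known.

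For $\Mnab^\PP = \umnab$, I would show that $\Mnab^\PP$ is diagonal in the basis $\{\stE_{\eta|\lambda}\}$ with eigenvalue $(-1)^{|\mu|}T_\mu$. The diagonality should come from a spectral characterization of $\PP_\ell\stE_{\eta|\lambda}$. In \cite{nsshuffle} the polynomials $\stE_{\eta|\lambda}$ are the stable limits of nonsymmetric Macdonald polynomials. Under $\PP_\ell$ they should therefore be joint eigenvectors, with simple joint spectrum, of a commuting family built from the Dyck path algebra: the operators $z_1,\dots,z_\ell$ (Cherednik-type $Y$-operators) together with an operator on the symmetric tail, such as $\Delta$-type or $d_-d_+^*$-type operators, whose restriction to $V_0$ is diagonal on $\Htild_\mu$.

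Granting this characterization, I would use the defining property $\Mnab\circ L = N(L)\circ\Mnab$. Since $N$ fixes $T_i$, $d_-$ and $d_+^*$, and the $z_i$ are built from $d_+^*$, $d_-$ and $T_i^{\pm1}$ (by the formulas for $z_1$ and $z_{i+1}$), we get that $\Mnab$ commutes with every $z_i$, and similarly with the tail operator. Simple joint spectrum then forces $\Mnab$ to act diagonally on $\PP_\ell\stE_{\eta|\lambda}$.

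To compute the eigenvalue, I would exploit the $\ell=0$ case. There $\Mnab=\nabla'$, and $\stE_{\emptyset|\lambda}$ is $J_\lambda$ up to the plethysm in $\PP_0$, which is again an eigenvector with eigenvalue $(-1)^{|\lambda|}T_\lambda$. For general $\ell$, I would push down with $d_-^\ell$, using $\Mnab d_- = d_-\Mnab$ from \eqref{mnab-1}. Via Lemma~\ref{useful}, $d_-^\ell\PP_\ell\stE_{\eta|\lambda}$ is a nonzero multiple of the symmetric object indexed by $(\eta;\lambda)_+$, since Weyl symmetrization sends $\tE_{\eta|\lambda}$ to $\omega\Htild_{(\eta;\lambda)_+}$. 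Applying $d_-^\ell$ to $\Mnab\,\PP_\ell\stE_{\eta|\lambda} = c\,\PP_\ell\stE_{\eta|\lambda}$ then gives $c=(-1)^{|\mu|}T_\mu$. The one thing to verify is that $d_-^\ell \PP_\ell \stE_{\eta|\lambda}\neq 0$, which follows from the Weyl symmetrization identity just quoted.

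The main obstacle is the spectral characterization in the second step. One must identify exactly which operators have the $\PP_\ell\stE_{\eta|\lambda}$ as joint eigenvectors, and check that the joint spectrum separates all pairs $(\eta|\lambda)\in\pairsr$. For the symmetric tail this is delicate, since several $\lambda$ can share the $z$-eigenvalues. If a clean commuting family is not available, I would instead try a triangularity argument: show that $\Mnab^\PP$ is triangular with respect to a suitable order on $\pairsr$ (via the flagged basis $\flagh_\eta(X_\ell)h_\lambda$), with diagonal entries $(-1)^{|\mu|}T_\mu$, and combine this with the known triangularity of $\stE_{\eta|\lambda}$. Distinct eigenvalues would then settle the claim, with the case of coinciding eigenvalues handled by the commutation with the $z_i$.
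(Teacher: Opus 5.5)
\textbf{Verdict: the outer reductions are correct, but there is a genuine gap at the decisive step.}

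Your two outer steps are correct. Since $\tE_{\eta|\lambda}=\Pisf_\ell\stE_{\eta|\lambda}$ and \eqref{141}, \eqref{142} prescribe the same eigenvalue, $\modnab=\Pisf_\ell\umnab\Pisf_\ell^{-1}$, so the second identity follows from the first. Next, suppose you know $\Mnab\,\PP_\ell\stE_{\eta|\lambda}=c\,\PP_\ell\stE_{\eta|\lambda}$ for a scalar $c$. Then applying $d_-^\ell$ (which commutes with $\Mnab$) forces $c=(-1)^{|\mu|}T_\mu$. This uses $d_-^\ell\PP_\ell\stE_{\eta|\lambda}=\pm\Htild_\mu\neq 0$, which follows from Lemma~\ref{useful} and $\Weyl_1\cdots\Weyl_\ell\tE_{\eta|\lambda}=\omega\Htild_\mu$, together with $\Mnab|_{V_0}=\nabla'$. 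But the whole content of the theorem is the remaining claim: that $\Mnab$ preserves every line $\Bbbk\,\PP_\ell\stE_{\eta|\lambda}$. Your proposal does not prove this. It postulates a spectral characterization that you yourself flag as the main obstacle. The paper gives no argument here either; it imports the statement from \cite[Theorem 8.3.1]{nsshuffle}. So as a self-contained proof, your plan is incomplete exactly at the decisive step.

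Concretely, three things are missing.

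First, it does not follow from $\stE_{\eta|\lambda}$ being stable limits of nonsymmetric Macdonald polynomials that the $\PP_\ell\stE_{\eta|\lambda}$ are joint eigenvectors of $z_1,\dots,z_\ell$. You would need to identify the $z_i$, transported by $\PP_\ell$, with stable-limit Cherednik operators whose eigenbasis is exactly $\{\stE_{\eta|\lambda}\}$. The normalizations must also match the plethystic twist $g[X]\mapsto g[X/(t-1)]$ built into $\PP_\ell$. This is a substantial input that you only assert.

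Second, as you note, the $z_i$ alone cannot separate the tails $\lambda$; for $\ell=0$ there are no $z_i$ at all. So you need an extra operator $D$ on $V_\ell$ with two properties:
(a) every $\PP_\ell\stE_{\eta|\lambda}$ is a $D$-eigenvector, and the joint spectrum with the $z_i$ is simple;
(b) $D\Mnab=\Mnab D$.
Your phrase ``similarly with the tail operator'' gives (b) only when $D$ is built from the $N$-fixed generators $d_-$, $d_+^*$, $T_i^{\pm1}$. An independently defined $\Delta$-type operator would need its own proof. An $N$-fixed candidate such as $d_-d_+^*$ does settle $\ell=0$. On $V_0$ it is $F\mapsto\langle y^0\rangle F[X+My]\,\Omega[-X/y]$, which acts on $\Htild_\mu$ by $1-MB_\mu$, and $B_\mu$ determines $\mu$. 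For $\ell>0$, however, nothing in your argument shows that it, or a corrected version, is diagonal on the $\PP_\ell\stE_{\eta|\lambda}$. It is not even evident that it commutes with the $z_i$, since $d_+^*d_--d_-d_+^*$ is a multiple of $z_1T_1\cdots T_{\ell-1}$. Property (a) is precisely the theorem in disguise.

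Third, the triangularity fallback is not an argument. You give no reason why $\Mnab^\PP$ should be triangular in the basis $\flagh_\eta(X_\ell)h_\lambda[X]$. Moreover, the diagonal values $(-1)^{|\mu|}T_\mu$ are highly degenerate. All $(\eta|\lambda)$ with the same $(\eta;\lambda)_+$ share them, and so do distinct partitions such as $(3,1,1,1)$ and $(2,2,2)$, both with $T_\mu=q^3t^6$. So you are thrown back on needing a separating commuting family anyway.
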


Analogous to the construction of $\Mnab^\PP$ and $\Pisf_\ell \Mnab^\PP \Pisf^{-1}_\ell$, 
we can similarly define two conjugated versions of $\tau_{u,\ell}^*$ acting on $\P(\ell)$, 
via the isomorphism \eqref{isopr} and the $\ell$-nonsymmetric plethysm map $\Pisf_\ell$, respectively:
\begin{equation}\label{defnptau}
    \overline{\tau_{u,\ell}^*} := \PP_\ell^{-1} \tau_{u,\ell}^* \PP_\ell.
\end{equation}
\begin{equation}\label{defnpiptau}
    \underline{\tau_{u,\ell}^*} := \Pisf_\ell \overline{\tau_{u,\ell}^*} \Pisf^{-1}_\ell 
    = \Pisf_\ell \PP_\ell^{-1} \tau_{u,\ell}^* \PP_\ell \Pisf^{-1}_\ell.
\end{equation}

Moreover, since \eqref{isopr} is an isomorphism, we can express \eqref{defnptau} in the following more explicit form:
\[
    \overline{\tau_{u,\ell}^*} = \sum_{n=0}^\infty (-u)^n \e_n\left[\frac{X}{q-1}\right].
\]
Here, the notation \(\e_n\left[\frac{X}{q-1}\right]\) 
means that for any \(f \in \P(\ell)\), the operator acts as:
\[
    \e_n\left[\frac{X}{q-1}\right] f := e_n\left[\frac{X}{q-1}\right] \cdot f.
\]

\begin{remark}
    However, because $\Pisf_\ell$ is not an algebra homomorphism, 
    we do not currently possess a simple closed-form representation for $\underline{\tau_{u,\ell}^*}$.
\end{remark}

We are now prepared to present our next major result. Utilizing the newly defined operators, we systematically re-factor the algebraic element $\alg$.

\begin{thm}\label{newform}
With the above notation for $\umnab$ and $\overline{\tau_{u,\ell}^*}$, we can rewrite the element $\alg$ defined in \eqref{defnalg} as follows:
\[
    \alg = (-1)^{k} (-t)^{\ell-|\alpha|} \barwb^{\PP} \left[\overline{\left(\tau_{qtu,\ell}^*\right)}^{-1} \left(\umnab\right)^{-1} 
    \overline{\left(\tau_{qtu,\ell}^*\right)}^{-1}\right]_k \left( x_1^{\alpha_1} \dots x_\ell^{\alpha_\ell}\right),
\]
where we define \(\barwb^\PP := \PP_\ell^{-1} \barwb \PP_\ell\).
\end{thm}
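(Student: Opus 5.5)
The plan is to start from the definition \eqref{defnalg} of $\alg$, replace $\M^\Q_k$ by the factorization \eqref{newmqk} of Corollary~\ref{imcor}, and then transport everything to $\P(\ell)$ through $\PP_\ell$. This requires two preliminary facts. The first is an explicit computation of the vector $y_1^{\alpha_1-1}\cdots y_\ell^{\alpha_\ell-1} d_+^\ell(1) \in y_1\cdots y_\ell V_\ell$. The second is the observation that $\PP_\ell(x_1^{\alpha_1}\cdots x_\ell^{\alpha_\ell}) = y_1^{\alpha_1}\cdots y_\ell^{\alpha_\ell}$, which holds by \eqref{isopr} with $g=1$.

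For the first fact, I would show by induction on $\ell$ that $d_+^\ell(1) = (-1)^\ell y_1\cdots y_\ell$. For $\ell=0$ this is trivial. For the inductive step, let $F = y_1\cdots y_\ell \in V_\ell$; it has no $X$-dependence, so the plethystic shift $X \mapsto X+(t-1)y_{\ell+1}$ in the definition \eqref{defn1} of $d_+$ does nothing. Then $d_+F = -T_1\cdots T_\ell(y_1\cdots y_{\ell+1})$. Since $y_1\cdots y_{\ell+1}$ is symmetric, each $T_i$ fixes it, because $s_iF=F$ kills the divided difference term. Hence $d_+F = -y_1\cdots y_{\ell+1}$. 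Multiplying by the $y$-monomial gives
\[
y_1^{\alpha_1-1}\cdots y_\ell^{\alpha_\ell-1} d_+^\ell(1) = (-1)^\ell\, y_1^{\alpha_1}\cdots y_\ell^{\alpha_\ell} = (-1)^\ell\, \PP_\ell\bigl(x_1^{\alpha_1}\cdots x_\ell^{\alpha_\ell}\bigr).
\]

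Substituting into \eqref{defnalg} and using \eqref{newmqk}, I get
\[
\alg = (-1)^{n+\ell} t^{\ell-|\alpha|}\, \PP_\ell^{-1}\barwb \left[(\tau_{qtu,\ell}^*)^{-1}\Mnab^{-1}(\tau_{qtu,\ell}^*)^{-1}\right]_k \PP_\ell (x^\alpha).
\]
Next I insert $\PP_\ell\PP_\ell^{-1}$ between consecutive factors. By \eqref{defnptau} and \eqref{eq831} (first identity), the conjugates $\PP_\ell^{-1}\tau_{qtu,\ell}^*\PP_\ell$ and $\PP_\ell^{-1}\Mnab\PP_\ell$ become $\overline{\tau_{qtu,\ell}^*}$ and $\umnab$, and their inverses conjugate accordingly. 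Likewise $\PP_\ell^{-1}\barwb\PP_\ell = \barwb^\PP$. Since extracting $(-1)^k\langle u^k\rangle$ is linear in the operator coefficients, it commutes with conjugation by the $u$-independent map $\PP_\ell$.

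Finally I match signs. Since $n = |\alpha|+k$, we have $(-1)^{n+\ell} = (-1)^k(-1)^{\ell-|\alpha|}$, so $(-1)^{n+\ell}t^{\ell-|\alpha|} = (-1)^k(-t)^{\ell-|\alpha|}$, which is exactly the prefactor in the statement.

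The step needing most care is the antilinearity of $\barwb$. It inverts $q$ and $t$, so scalars cannot be moved freely across it. Here, however, the scalar $t^{\ell-|\alpha|}$ and the sign already sit to the left of $\M^\Q_k$ in \eqref{defnalg}. The factor $(-1)^\ell$ coming from $d_+^\ell(1)$ is a sign, so it is unaffected by $q,t$-inversion and passes through $\barwb$ unchanged. I would also double-check that the $u$-coefficient extraction in \eqref{newmqk} is taken with $\barwb$ outside the bracket, as Corollary~\ref{imcor} states, so that no hidden $q,t$-dependence of $u$ enters.
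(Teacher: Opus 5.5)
Your proposal is correct and takes the same route as the paper. Both substitute the factorization \eqref{newmqk} of Corollary~\ref{imcor} into \eqref{defnalg}, and both use $d_+^\ell(1)=(-1)^\ell y_1\cdots y_\ell$ together with $\PP_\ell(x_1^{\alpha_1}\cdots x_\ell^{\alpha_\ell})=y_1^{\alpha_1}\cdots y_\ell^{\alpha_\ell}$. Both then insert $\PP_\ell\PP_\ell^{-1}$ between the factors to apply \eqref{defnptau} and \eqref{eq831}, and rewrite $(-1)^{n+\ell}t^{\ell-|\alpha|}=(-1)^k(-t)^{\ell-|\alpha|}$ using $n=|\alpha|+k$. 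Your inductive check of $d_+^\ell(1)$ and your remark that only a sign, not a $q,t$-dependent scalar, passes through the antilinear $\barwb$ just fill in details the paper calls a direct computation.
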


\begin{proof}
    By Corollary \ref{imcor}, together with the direct computation that \(d_+^\ell \cdot 1 = (-1)^\ell y_1 \dots y_\ell\), we can rewrite $\alg$ systematically as follows:
\begin{align*}
    & \mathrel{\phantom{=}} (-1)^{n} \PP_\ell^{-1} t^{\ell-|\alpha|} \M^\Q_k \left( y_1^{\alpha_1-1} \dots y_\ell^{\alpha_\ell-1} d_+^\ell (1)\right) \\[1ex]
    &= (-1)^{n} \PP_\ell^{-1} t^{\ell-|\alpha|} \barwb \left[\left(\tau_{qtu,\ell}^*\right)^{-1} \Mnab^{-1} \left(\tau_{qtu,\ell}^*\right)^{-1}\right]_k \left( y_1^{\alpha_1-1} \dots y_\ell^{\alpha_\ell-1} d_+^\ell (1)\right) && \text{(by \eqref{newmqk})} \\[1ex]
    &= (-1)^{n+\ell} t^{\ell-|\alpha|} \PP_\ell^{-1} \barwb \left[\left(\tau_{qtu,\ell}^*\right)^{-1} \Mnab^{-1} \left(\tau_{qtu,\ell}^*\right)^{-1}\right]_k \PP_\ell \left( x_1^{\alpha_1} \dots x_\ell^{\alpha_\ell}\right) \\[1ex]
    &= (-1)^{n+\ell} t^{\ell-|\alpha|} \PP_\ell^{-1} \barwb \left[\PP_{\ell} \PP_\ell^{-1} \left(\tau_{qtu,\ell}^*\right)^{-1} 
    \PP_{\ell} \PP_\ell^{-1} \Mnab^{-1} \PP_{\ell} \PP_\ell^{-1} \left(\tau_{qtu,\ell}^*\right)^{-1} \PP_\ell\right]_k \left( x_1^{\alpha_1} \dots x_\ell^{\alpha_\ell}\right) \\[1ex]
    &= (-1)^{k} (-t)^{\ell-|\alpha|} \barwb^{\PP} \left[\overline{\left(\tau_{qtu,\ell}^*\right)}^{-1} \left(\Mnab^\PP\right)^{-1} 
    \overline{\left(\tau_{qtu,\ell}^*\right)}^{-1}\right]_k \left( x_1^{\alpha_1} \dots x_\ell^{\alpha_\ell}\right) && \text{(by \eqref{defnptau})} \\[1ex]
    &= (-1)^{k} (-t)^{\ell-|\alpha|} \barwb^{\PP} \left[\overline{\left(\tau_{qtu,\ell}^*\right)}^{-1} \left(\umnab\right)^{-1} 
    \overline{\left(\tau_{qtu,\ell}^*\right)}^{-1}\right]_k \left( x_1^{\alpha_1} \dots x_\ell^{\alpha_\ell}\right) && \text{(by \eqref{eq831})}.
\end{align*}
\end{proof}

With this new factorization, Theorem \ref{q ns delta unmod} can be restated as an identity between operators and signed flagged row LLT polynomials:
\begin{multline} \label{e ns shuffle unmod 2}
    (-1)^{k} (-t)^{\ell-|\alpha|} \barwb^{\PP} \left[\overline{\left(\tau_{qtu,\ell}^*\right)}^{-1} \left(\umnab\right)^{-1} \overline{\left(\tau_{qtu,\ell}^*\right)}^{-1}\right]_k
    \left( x_1^{\alpha_1} \dots x_\ell^{\alpha_\ell}\right)\\
     = \mathop{\sum_{(\pi, dr)\in \mathrm{D}(\alpha)^{\ast k}}} q^{\area(\pi, dr)} \Growpm_\ell(\hat{\pi}', \Sigma_\pi).
\end{multline}

\begin{prop}[{\cite[Proposition 8.4.7]{nsshuffle}}]\label{p flip LLT}
The operator $\barwb^\PP$ maps signed flagged row LLT polynomials to signed flagged column LLT polynomials. 
Precisely, for any $\pi \in \mathrm{D}_\ell(n)$ and any marking $\Sigma$ of $\pi$, we have:
\[
    \barwb^\PP  \Growpm_\ell(\pi, \Sigma)(X;t) = (-1)^n \Gcolpm_\ell(\pi, \Sigma)(X;t^{-1}).
\]
\end{prop}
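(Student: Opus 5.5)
The plan is to verify the identity on the operator side, using the construction of $\Growpm_\ell(\pi,\Sigma)$ as $\chi^\PP_\ell(\pi,\Sigma) = (-1)^n\PP_\ell^{-1}\chi_\ell(\pi,\Sigma)$ from \eqref{e Gcal vs chiPP}, and the column analogue $\Gcolpm_\ell(\pi,\Sigma)(X;t^{-1}) = \widetilde{\chi}^\PP_\ell(\pi,\Sigma)(X;t)$. Conjugating by $\PP_\ell$, the claim becomes
\[
\barwb\,\chi_\ell(\pi,\Sigma) = (-1)^n\,\PP_\ell\,\widetilde{\chi}^\PP_\ell(\pi,\Sigma)
\]
in $V_\ell$. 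Since $\chi_\ell(\pi,\Sigma)$ is obtained from $1\in V_0$ by a word in $d_-$, $d_+$ and the commutators $\frac{1}{t-1}[d_-,d_+]$, it lies in $\bigoplus_\ell y_1\cdots y_\ell V_\ell$. So $\barwb$ is defined on it, and we may push $\barwb$ through the word using $\barwb\circ L = \bar\omega(L)\circ\barwb$ and $\barwb(1)=1$.

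\textbf{Step 1: unmarked steps.} Traverse $\pi$ from the northeast endpoint as in Definition~\ref{d-d+p}. A vertical step contributes $d_-$, which $\bar\omega$ fixes. A horizontal step taken when the current space is $V_j$ contributes $d_+$, which $\bar\omega$ sends to $-t^{-j}d_+$. The index $j$ is exactly the running quantity $h-v$, i.e.\ the $r$ of the column definition. Hence an unmarked horizontal step $-d_+^\PP$ (i.e.\ $d_+$ in $V$) becomes $-t^{-r}d_+$, matching the column rule up to the sign conventions.

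\textbf{Step 2: marked corners.} The operator $\frac{1}{t-1}(d_-d_+-d_+d_-)$ is sent by $\bar\omega$ (antilinear, so $t\mapsto t^{-1}$ in scalars) to
\[
\frac{1}{t^{-1}-1}\bigl(d_-(-t^{-r}d_+) - (-t^{-r+1}d_+)d_-\bigr).
\]
The two $d_+$'s act from $V_{r}$ and $V_{r-1}$ respectively, which produces the differing powers of $t$. This is precisely the marked-corner rule in the definition of $\widetilde{\chi}^\PP_\ell$, with the shift in $r$ coming from the order in which the steps are traversed.

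\textbf{Step 3: signs.} Collect the global signs. There are $(-1)^n$ from \eqref{e Gcal vs chiPP}, the signs relating $-d_+^\PP$ to $d_+$ in the column word, and the sign $-1$ from $\bar\omega(d_+)=-t^{-\ell}d_+$. The word has $n$ horizontal steps and $n-\ell$ vertical steps, and each marked corner uses one of each. The net sign should therefore be $(-1)^n$, as claimed. Antilinearity also sends the $t$ in $\Growpm_\ell(X;t)$ to $t^{-1}$, which accounts for the argument $t^{-1}$ on the column side.

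The main obstacle is the bookkeeping in Steps 2 and 3: the precise value of $r$ at a marked corner, and whether $\bar\omega(d_+)$ uses the source index $j$ or $j+1$. I would first check the case $\ell=0$, $n=1$ (a single $d_+$ followed by $d_-$), and the smallest marked corner, and use these to fix the conventions before carrying out the general induction on the length of the word.
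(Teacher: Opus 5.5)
\textbf{Gap: the sign bookkeeping.} The paper gives no proof here; it imports the result from Blasiak et al. Your strategy is the natural one:
\begin{itemize}
\item write $\Growpm_\ell(\pi,\Sigma)$ as the word $\chi_\ell(\pi,\Sigma)$;
\item push $\barwb$ through it using $\barwb\circ L=\bar\omega(L)\circ\barwb$ and $\barwb(1)=1$;
\item recognize the output as the word defining $\widetilde{\chi}^\PP_\ell(\pi,\Sigma)$.
\end{itemize}
Steps 1 and 2 are essentially right, with $r$ the index of the source space, i.e.\ $h-v$ counted \emph{before} the current step. But the sign handling is wrong, and the sign $(-1)^n$ is part of what you are proving.

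Since $\Growpm_\ell=(-1)^n\PP_\ell^{-1}\chi_\ell$, and the rational scalar $(-1)^n$ commutes with $\barwb$, the claim reads $(-1)^n\PP_\ell^{-1}\barwb\chi_\ell=(-1)^n\widetilde{\chi}^\PP_\ell$. The two factors $(-1)^n$ cancel, so the correct target in $V_\ell$ is
\[
\barwb\,\chi_\ell(\pi,\Sigma)=\PP_\ell\,\widetilde{\chi}^\PP_\ell(\pi,\Sigma),
\]
with no sign. Your version, carrying $(-1)^n$, is false for odd $n$. Take $\ell=0$, $n=1$: then $\chi_0=d_-d_+(1)=p_1$, so $\barwb\chi_0=d_-\,\bar\omega(d_+)(1)=-d_-d_+(1)=-p_1$. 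On the other side, $\PP_0\widetilde{\chi}^\PP_0=d_-(-d_+)(1)=-p_1$. Your target would predict $+p_1$.

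\textbf{What Steps 1 and 2 actually prove.} They prove exactly the sign-free identity.
\begin{itemize}
\item The $V$-form of the column horizontal operator is $\PP(-t^{-r}d_+^\PP)\PP^{-1}=-t^{-r}d_+$, which is precisely $\bar\omega(d_+|_{V_r})$.
\item $\bar\omega$ sends $\frac{1}{t-1}(d_-d_+-d_+d_-)$ exactly to the $V$-form of the column marked-corner operator.
\end{itemize}
So no stray signs arise, and an induction on word length starting from $\barwb(1)=1$ finishes. The $(-1)^n$ in the proposition therefore comes solely from \eqref{e Gcal vs chiPP}.

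\textbf{Where Step 3 goes wrong.} It reaches $(-1)^n$ only by counting the $-1$ in $\bar\omega(d_+)=-t^{-j}d_+$ twice. You use it once when matching with the column operator $-t^{-r}d_+$ in Step 1, and again in the global tally. The phrase ``$-d_+^\PP$ (i.e.\ $d_+$ in $V$)'' conflates $\PP^{-1}(-d_+)\PP$ with $d_+$, which is where the double count enters.

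\textbf{Where the $t^{-1}$ comes from.} It does not come from antilinearity acting on $\Growpm_\ell(X;t)$; $\barwb^\PP$ is not $\vartheta$. Antilinearity is used only to turn $\frac{1}{t-1}$ into $\frac{1}{t^{-1}-1}$, and it is already built into $\bar\omega(d_+)$. The argument $t^{-1}$ enters through the identity $\Gcolpm_\ell(\pi,\Sigma)(X;t^{-1})=\widetilde{\chi}^\PP_\ell(\pi,\Sigma)(X;t)$.

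With the target corrected, your argument goes through.
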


Using Theorem \ref{newform} and Proposition \ref{p flip LLT}, we obtain an analogous version of Theorem \ref{q ns delta unmod}, which neatly expresses the identity in terms of signed flagged column LLT polynomials utilizing the operators $\overline{\tau_{u,\ell}^*}$ and $\umnab$.

\begin{cor}\label{cor:ns_delta_column}
With the above notation, we have:
\begin{multline}\label{e ns shuffle unmod 3}
	(-1)^{n+k} (-t)^{\ell-|\alpha|} \vartheta \left[\overline{\left(\tau_{qtu,\ell}^*\right)}^{-1} \left(\umnab\right)^{-1} \overline{\left(\tau_{qtu,\ell}^*\right)}^{-1} \right]_k \left( x_1^{\alpha_1} \dots x_\ell^{\alpha_\ell}\right) \\
	= \mathop{\sum_{(\pi, dr)\in \mathrm{D}(\alpha)^{\ast k}}} q^{\area(\pi, dr)} \Gcolpm_\ell(\hat{\pi}', \Sigma_\pi),
\end{multline}
where $\vartheta$ is the $\mathbb{Q}(q,t)$-antilinear map that sends \(f(X;q,t) \mapsto f(X;q^{-1},t^{-1})\).
\end{cor}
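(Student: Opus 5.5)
The plan is to start from the restated identity \eqref{e ns shuffle unmod 2} and apply the involution $\barwb^\PP$ to both sides, then use Proposition \ref{p flip LLT} on the right. On the right-hand side, each summand $q^{\area(\pi,dr)}\Growpm_\ell(\hat\pi',\Sigma_\pi)(X;t)$ is sent to $\barwb^\PP(q^{\area}\Growpm_\ell) $. Here I must be careful about antilinearity: $\barwb$ intertwines $\AA_t$ with $\bar\omega(\AA_t)$, and on $V_0$ it restricts to $\overline{\omega}$, which inverts $q$ and $t$. So $\barwb^\PP$ is antilinear with respect to $q\mapsto q^{-1}$, $t\mapsto t^{-1}$. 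Consequently the right-hand side becomes
\[
\sum_{(\pi,dr)} q^{-\area(\pi,dr)}(-1)^n\Gcolpm_\ell(\hat\pi',\Sigma_\pi)(X;t^{-1}).
\]
On the left-hand side, since $\barwb^\PP$ is an involution, applying it cancels the leading $\barwb^\PP$. The scalar prefactor $(-1)^k(-t)^{\ell-|\alpha|}$ becomes $(-1)^k(-t)^{|\alpha|-\ell}$ under the antilinearity.

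Next I apply the antilinear map $\vartheta$ to both sides to restore $q$ and $t$. On the right, $\vartheta$ turns $q^{-\area}\Gcolpm_\ell(\cdot)(X;t^{-1})$ into $q^{\area}\Gcolpm_\ell(\cdot)(X;t)$. On the left, $\vartheta$ turns $(-t)^{|\alpha|-\ell}$ back into $(-t)^{\ell-|\alpha|}$ and places $\vartheta$ in front of the bracketed expression $\left[\cdots\right]_k(x^\alpha)$. The monomial $x^\alpha$ is unaffected by $\vartheta$, and the coefficient extraction in $u$ commutes with $\vartheta$ because $u$ is a formal variable fixed by $\vartheta$. Finally, multiplying through by $(-1)^n$ moves the sign to the left and produces $(-1)^{n+k}(-t)^{\ell-|\alpha|}\vartheta[\cdots]_k(x_1^{\alpha_1}\cdots x_\ell^{\alpha_\ell})$, which is exactly \eqref{e ns shuffle unmod 3}.

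The main obstacle is bookkeeping the antilinearity precisely. I need to confirm that $\barwb^\PP$ acts on scalars exactly as $\vartheta$ does: it sends $q\mapsto q^{-1}$ and $t\mapsto t^{-1}$, and it fixes $u$ and the polynomial structure. Only then does the composite $\vartheta\barwb^\PP$ become $\mathbb{Q}(q,t)$-linear, so that the scalar factors return to their original form. I would check this by tracing the definition of $\barwb$ through $\bar\omega$ and Proposition \ref{p flip LLT}, which already encodes the $t\mapsto t^{-1}$ on the column LLT side. The bracket $[\cdot]_k$ carries the convention $(-1)^k\langle u^k\rangle$. Because $\vartheta$ is applied only after the coefficient has been extracted, no reparametrization of $u$ is needed.
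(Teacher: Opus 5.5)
Your proposal is correct and matches the paper's proof: the paper applies the composite $(-1)^n\vartheta\barwb^{\PP}$ to both sides of \eqref{e ns shuffle unmod 2} and invokes Proposition~\ref{p flip LLT}, which is exactly your three-step sequence. Your explicit tracking of antilinearity, showing that $(-t)^{\ell-|\alpha|}$ and $q^{\area(\pi,dr)}$ return to their original form under $\vartheta\barwb^{\PP}$, spells out what the paper's one-line argument leaves implicit.
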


\begin{proof}
The result follows directly by applying the operator $(-1)^n \vartheta \barwb^{\PP}$ to both sides of equation \eqref{e ns shuffle unmod 2}, invoking Proposition \ref{p flip LLT}.
\end{proof}

Recall the definition of $\nsC_\alpha$ from \cite[Eq.~(189)]{nsshuffle}, which we restate here for convenience:
\begin{equation}\label{nsCdef}
    \nsC_\alpha := (-t)^{|\alpha|-\ell} \Pisf_\ell( x_1^{\alpha_1} \dots x_\ell^{\alpha_\ell} ) = (-t)^{|\alpha|-\ell} \pol \Big( \frac{x_1^{\alpha_1} \dots x_\ell^{\alpha_\ell}}{\prod_{1 \le i < j \le \ell} (1-t \spa x_i/x_j)} \Big) \in \P(\ell).
\end{equation}
These elements are related to the compositional Hall-Littlewood polynomials via Weyl symmetrization by \cite[Proposition 4.6]{BMPS19}:
\begin{equation}\label{weylnsc}
    \Weyl_1 \dots \Weyl_\ell \nsC_\alpha = C_{\alpha}(X;t^{-1}).
\end{equation}

\begin{defn}[{\cite[Section 8.4]{nsshuffle}}]
    Let \(\pi \in \mathrm{D}(\alpha)^{\ast k}\). A \emph{flagged word parking function} on $\pi$ is a labeling $w$ of the north steps of $\pi$ by positive integers that satisfies the following conditions:
    \begin{itemize}
        \item The labels are strictly increasing upward along each vertical run of $\pi$;
        \item The label of the first $j$ north steps lying on the main diagonal is $\le j$, for $1 \le j \le \ell$.
    \end{itemize}

We write $\FWPF_\alpha(\pi)$ for the set of all flagged word parking functions on $\pi$.

For any \(w \in \FWPF_\alpha(\pi)\), let $w_i$ denote the label of the north step in the $i$-th row. The \emph{diagonal inversion statistic} $\dinv(\pi,w)$ is the number of pairs of row indices $(i,j)$ with $1 \le i < j \le n$ such that $i$ attacks $j$ and $w_i < w_j$.
\end{defn}

This yields the culmination of our nonsymmetric operator formulations, corresponding to the identity \eqref{eq2} stated in the introduction:

\begin{cor}\label{cor:ns_delta_final}
With the above notation, we have:
    \begin{multline}\label{e ns shuffle unmod 4}
        (-1)^{n+k} \vartheta \left[\underline{\left(\tau_{qtu,\ell}^*\right)}^{-1} \modnab^{-1} \underline{\left(\tau_{qtu,\ell}^*\right)}^{-1}\right]_k \nsC_\alpha \\
        = \sum_{\pi \in \mathrm{D}(\alpha)^{\ast k}} \sum_{w \in \FWPF_\alpha(\pi)} q^{\area(\pi, dr)} t^{\dinv(\pi,w)} \xx^{\content(w)}.
    \end{multline}
\end{cor}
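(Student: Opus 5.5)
The plan is to obtain \eqref{e ns shuffle unmod 4} by applying the plethysm map $\Pisf_\ell$ to both sides of Corollary~\ref{cor:ns_delta_column}, and then conjugating the operators through $\Pisf_\ell$.

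\textbf{Right-hand side.} Equation~\eqref{eq5.4.1} gives $\Pisf_\ell\,\Gcolpm_\ell(\hat\pi',\Sigma_\pi)(X;t^{-1}) = \Gcol_\ell(\hat\pi',\Sigma_\pi)(X;t^{-1})$. Because of the $t^{-1}$, I will not apply $\Pisf_\ell$ directly to \eqref{e ns shuffle unmod 3}. Instead I first move $\vartheta$ to the other side, which turns the right-hand side into $\sum q^{-\area}\Gcolpm_\ell(\hat\pi',\Sigma_\pi)(X;t^{-1})$. Then I apply $\Pisf_\ell$, which has parameter $t$. Finally I reapply $\vartheta$. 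This yields $\sum q^{\area}\Gcol_\ell(\hat\pi',\Sigma_\pi)(X;t)$.

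\textbf{Left-hand side.} Before the final $\vartheta$, the left-hand side is
\[
(-1)^{n+k}(-t)^{\ell-|\alpha|}\,\Pisf_\ell\Big[\overline{(\tau^*_{qtu,\ell})}^{-1}\umnab^{-1}\overline{(\tau^*_{qtu,\ell})}^{-1}\Big]_k\big(x^\alpha\big).
\]
Here the scalar $(-t)^{\ell-|\alpha|}$ has already been moved inside, which is legitimate since $\vartheta$ is antilinear. I then insert $\Pisf_\ell^{-1}\Pisf_\ell$ between the factors. Using \eqref{defnpiptau} and \eqref{eq831}, the bracket becomes $\big[\underline{(\tau^*_{qtu,\ell})}^{-1}\modnab^{-1}\underline{(\tau^*_{qtu,\ell})}^{-1}\big]_k\,\Pisf_\ell(x^\alpha)$. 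By \eqref{nsCdef}, $\Pisf_\ell(x^\alpha)=(-t)^{\ell-|\alpha|}\nsC_\alpha$.

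\textbf{Sign and power of $t$ bookkeeping.} This is the step I expect to be the main obstacle. In the version above, the left side carries $(-t)^{2(\ell-|\alpha|)}$ rather than $1$. So the honest route is to work with \eqref{e ns shuffle unmod 3} in the unconjugated form, before $\vartheta$ is applied. I will track how $\vartheta$ interacts with the power of $t$: $\vartheta$ sends $(-t)^{\ell-|\alpha|}$ to $(-t)^{|\alpha|-\ell}$, and this cancels the $(-t)^{\ell-|\alpha|}$ coming from $\Pisf_\ell(x^\alpha)$. I will also check that the order in which $\vartheta$ and $\Pisf_\ell$ are applied is consistent with the statement as written. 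I expect this normalization to come out exactly so that the prefactor reduces to $(-1)^{n+k}$, matching \eqref{e ns shuffle unmod 4}.

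\textbf{Combinatorial expansion.} The last step is to expand the flagged column LLT polynomial $\Gcol_\ell(\hat\pi',\Sigma_\pi)(X;t)$. I will use the $\zeta$-map dictionary of \cite[Section 8.4]{nsshuffle} as in the shuffle case $k=0$. Column words correspond to labellings strictly increasing up the vertical runs of $\pi$, the flag condition translates to the diagonal condition, and $\widetilde{\inv}$ translates to $\dinv(\pi,w)$. This gives $\sum_{w\in\FWPF_\alpha(\pi)}t^{\dinv(\pi,w)}x^{\content(w)}$, which is exactly as in Corollary~\ref{c ns delta mod} for the row case.
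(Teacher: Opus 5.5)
Your route is the paper's own proof. The paper applies $\vartheta\Pisf_\ell\vartheta$ to \eqref{e ns shuffle unmod 3}, conjugates the operators through $\Pisf_\ell$ via \eqref{defnpiptau} and \eqref{eq831}, uses \eqref{nsCdef} and \eqref{eq5.4.1}, and then reads off the flagged word parking functions. The one flaw is the slip in your left-hand-side paragraph, which your bookkeeping paragraph already diagnoses: because $\vartheta$ is antilinear, stripping it off turns $(-t)^{\ell-|\alpha|}$ into $(-t)^{|\alpha|-\ell}$ rather than leaving it unchanged, and $(-t)^{|\alpha|-\ell}\Pisf_\ell(x_1^{\alpha_1}\cdots x_\ell^{\alpha_\ell})=\nsC_\alpha$ exactly, so the prefactor is $(-1)^{n+k}$ with no spurious $(-t)^{2(\ell-|\alpha|)}$ and no real obstacle.
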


\begin{proof}
We apply the operator $\vartheta \Pisf_\ell \vartheta$ to both sides of \eqref{e ns shuffle unmod 3}. The computation proceeds as follows:
\begin{align*}
    & \mathrel{\phantom{=}} (-1)^{n+k} \vartheta \Pisf_\ell \vartheta (-t)^{\ell-|\alpha|} \vartheta \left[\overline{\left(\tau_{qtu,\ell}^*\right)}^{-1} 
    \left(\umnab\right)^{-1} \overline{\left(\tau_{qtu,\ell}^*\right)}^{-1} \right]_k
    \left( x_1^{\alpha_1} \dots x_\ell^{\alpha_\ell}\right) \\[1ex]
    &= (-1)^{n+k} \vartheta \Pisf_\ell \vartheta \vartheta (-t)^{|\alpha|-\ell}\left[\overline{\left(\tau_{qtu,\ell}^*\right)}^{-1} 
    \left(\umnab\right)^{-1} \overline{\left(\tau_{qtu,\ell}^*\right)}^{-1} \right]_k
    \left( x_1^{\alpha_1} \dots x_\ell^{\alpha_\ell}\right) \\[1ex]
    &= (-1)^{n+k} \vartheta \left[\Pisf_\ell \overline{\left(\tau_{qtu,\ell}^*\right)}^{-1} \Pisf_\ell^{-1} \Pisf_\ell
    \left(\umnab\right)^{-1} \Pisf_\ell^{-1} \Pisf_\ell \overline{\left(\tau_{qtu,\ell}^*\right)}^{-1} \Pisf_\ell^{-1}\right]_k (-t)^{|\alpha|-\ell} \Pisf_\ell \left( x_1^{\alpha_1} \dots x_\ell^{\alpha_\ell}\right) \\[1ex]
    &= (-1)^{n+k} \vartheta \left[\underline{\left(\tau_{qtu,\ell}^*\right)}^{-1} \modnab^{-1} \underline{\left(\tau_{qtu,\ell}^*\right)}^{-1}\right]_k (-t)^{|\alpha|-\ell}\Pisf_\ell \left( x_1^{\alpha_1} \dots x_\ell^{\alpha_\ell}\right) && \hspace{-10mm}\text{(by \eqref{eq831}, \eqref{defnpiptau})} \\[1ex]
    &= (-1)^{n+k} \vartheta \left[\underline{\left(\tau_{qtu,\ell}^*\right)}^{-1} \modnab^{-1} \underline{\left(\tau_{qtu,\ell}^*\right)}^{-1}\right]_k \nsC_\alpha && \text{(by \eqref{nsCdef})} \\[1ex]
    &= \mathop{\sum_{(\pi, dr)\in \mathrm{D}(\alpha)^{\ast k}}} q^{\area(\pi, dr)} \Gcol_\ell(\hat{\pi}', \Sigma_\pi) && \text{(by \eqref{eq5.4.1})} \\[1ex]
    &= \sum_{\pi \in \mathrm{D}(\alpha)^{\ast k}} \sum_{w \in \FWPF_\alpha(\pi)} q^{\area(\pi, dr)} t^{\dinv(\pi,w)} \xx^{\content(w)}. && \qedhere
\end{align*}
\end{proof}

\subsection{Connection with the symmetric version}
As mentioned in the introduction, we now demonstrate that Weyl symmetrization recovers the original compositional Delta theorem, 
conjectured by D'Adderio et al.\ \cite{theta} and proved by D'Adderio and Mellit \cite{proofdelta}.

\begin{prop}\label{restore}
Applying the Weyl symmetrization operator $\Weyl_1 \dots \Weyl_\ell$ to both sides of \eqref{e ns shuffle unmod 4} 
recovers the original compositional Delta theorem, 
which is exactly the statement of Theorem \ref{thm compositional delta conjecture}.
\end{prop}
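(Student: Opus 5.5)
The plan mirrors the proof of Proposition~\ref{prop:recover_omega}, but it works with the column side and the involution $\vartheta$. The key observation is that the left-hand side of \eqref{e ns shuffle unmod 4} equals $(-1)^n\vartheta\Pisf_\ell\vartheta\,\vartheta\barwb^\PP(\alg)$. This follows from the computation in the proof of Corollary~\ref{cor:ns_delta_final} combined with Theorem~\ref{newform}. Equivalently, it is $\vartheta\Pisf_\ell\vartheta$ applied to the left side of \eqref{e ns shuffle unmod 3}.

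\textbf{Right-hand side.} Since $\Weyl_1\cdots\Weyl_\ell$ only involves the $x$-variables, it commutes with $\vartheta$. By \eqref{weylcolumn}, applying it to the right-hand side $\sum q^{\area}\Gcol_\ell(\hat\pi',\Sigma_\pi)(X;t)$ gives $\sum q^{\area}\chi(\pi',\Sigma_\pi)$, after the appropriate $t\mapsto t^{-1}$ bookkeeping. Here I use the identification $\Gcol_0(\mathsf N^\ell\pi,\Sigma)(X;t^{-1})=\chi(\pi,\Sigma)(X;t^{-1})$ from Figure~\ref{fig:summary}. Then \cite[Remark 6.21]{theta} identifies $\sum q^{\area(\pi,dr)}\chi(\pi',\Sigma_\pi)$ with the parking-function sum $\sum_{P}q^{\area(P)}t^{\dinv(P)}\xx^P$, exactly as in \eqref{omegaright} but without the $\omega$. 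I will check the $t$-inversion conventions carefully here. The expected outcome is that the column LLT sum, after $\vartheta$ is accounted for, gives the $\dinv$ generating function.

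\textbf{Left-hand side.} I write the left-hand side as $(-1)^{n+k}\vartheta\Pisf_\ell L$ with $L=\vartheta(-t)^{|\alpha|-\ell}\bigl[\cdots\bigr]_k x^\alpha$, where the bracket is in terms of $\overline{\tau}$ and $\umnab$. Commuting $\vartheta$ past $\Weyl$ and using Lemma~\ref{useful} turns $\Weyl_1\cdots\Weyl_\ell\Pisf_\ell$ into $\Pisf_0\PP_0^{-1}d_-^\ell\PP_\ell$. Next I undo the conjugations: $\PP_\ell\overline{\tau}\PP_\ell^{-1}=\tau_{qtu,\ell}^*$ and $\PP_\ell\umnab\PP_\ell^{-1}=\Mnab$. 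I then push $d_-^\ell$ through $(\tau^*_{qtu,\ell})^{-1}$ and $\Mnab^{-1}$ using \eqref{eqtau-1} and \eqref{mnab-1}. This produces $[(\tau^*_{qtu})^{-1}\nabla'^{-1}(\tau^*_{qtu})^{-1}]_k$ acting on $d_-^\ell\PP_\ell(x^\alpha)=(-1)^\ell d_-^\ell y^{\alpha-1}d_+^\ell(1)$ in $\sym[X]$. By \eqref{eq:Calpha}, this last element is essentially $\overline{\omega}C_\alpha(X;t)$ up to explicit signs and powers of $t$. Finally I apply $\vartheta$ together with $\Pisf_0\PP_0^{-1}=(-1)^d\omega$ from \eqref{pi0p0}.

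\textbf{Main obstacle.} The hardest step is converting $\vartheta\,\omega\,[(\tau^*_{qtu})^{-1}\nabla'^{-1}(\tau^*_{qtu})^{-1}]_k\,\overline{\omega}$ into $[\tau^*_u\nabla'\tau^*_u]_k$, so that \eqref{remarksign} yields $\Theta_{e_k}\nabla C_\alpha$. The route I would try first is the following. Since $\vartheta\omega$ acts as $\overline{\omega}$ on symmetric functions (up to the sign from $X\mapsto -X$ on homogeneous pieces), the identity $\overline{\omega}\nabla'^{-1}\overline{\omega}=\nabla'$ applies; on $V_0$ this is the relation $\N=\nabla'\overline{\omega}$ being an involution. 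In addition, $\overline{\omega}$ conjugates $(\tau^*_{qtu})^{-1}$ into $\tau^*_u$. This is exactly the step $\overline{\omega}(\tau^*_{qtu})^{-1}=\tau^*_u\overline{\omega}$ used in the proof of Proposition~\ref{prop:recover_omega}, since $h_n[X/M]$ at $X\mapsto -X$, $q,t\mapsto q^{-1},t^{-1}$ becomes $(qt)^n$ times $e_n$ up to sign. The same conjugation also turns $\vartheta C_\alpha(X;t^{-1})$ into $C_\alpha(X;t)$. The remaining risk is the sign tracking: collecting $(-1)^{n+k}$, $(-1)^\ell$, $(-t)^{|\alpha|-\ell}$, the degree sign $(-1)^d$ with $d=n$, and the coefficient extraction $(-1)^k\langle u^k\rangle$. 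I would verify that these all cancel by checking the case $k=0$, $\ell=1$ against the known shuffle specialization.
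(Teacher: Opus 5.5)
Your proposal is correct, and on the right-hand side it is the same as the paper: \eqref{weylcolumn}, then the parking-function expansion of $d_-^\ell\sum q^{\area(\pi,dr)}\chi_\ell(\hat\pi',\Sigma_\pi)$. The paper applies \eqref{weylcolumn} directly to $\Gcol_\ell(\hat\pi',\Sigma_\pi)$, so the extra $t\mapsto t^{-1}$ bookkeeping you anticipate is not needed.

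On the left-hand side you take a different route at the point where $C_\alpha$ enters. The paper keeps the input as $\Pisf_\ell^{-1}\nsC_\alpha$. It uses Lemma~\ref{lemma783} (proved on the basis $\tE_{\eta|\lambda}$) together with \eqref{weylnsc} to get $d_-^\ell\PP_\ell\Pisf_\ell^{-1}\nsC_\alpha=(-1)^{|\alpha|}\omega\, C_\alpha(X;t^{-1})$. It then converts $\vartheta\omega[\cdots]_k\omega\vartheta$ using \eqref{111} and \eqref{222}.

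You instead evaluate $d_-^\ell\PP_\ell\bigl((-t)^{|\alpha|-\ell}x^\alpha\bigr)$ directly with Mellit's formula \eqref{eq:Calpha}, exactly as in Proposition~\ref{prop:recover_omega}. You then conjugate by $\overline\omega$: you get $\overline\omega\nabla'^{-1}\overline\omega=\nabla'$ from the involutivity of $\N=\nabla'\overline\omega$ on $V_0$, and $\overline\omega(\tau^*_{qtu})^{-1}\overline\omega=\tau^*_u$ by a direct plethystic computation. The two intermediate results agree, since $\overline\omega C_\alpha(X;t)=(-1)^{|\alpha|}\omega C_\alpha(X;t^{-1})$.

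The sign bookkeeping you leave open closes in general, so no special-case check is needed. First, $d_-^\ell\PP_\ell\bigl((-t)^{|\alpha|-\ell}x^\alpha\bigr)=\overline\omega C_\alpha(X;t)$ exactly. Second, $\vartheta\Pisf_0\PP_0^{-1}=\overline\omega$ exactly on $\sym[X]$ by \eqref{pi0p0}. Hence the symmetrized left side is $(-1)^{n+k}\,\overline\omega\,[(\tau^*_{qtu})^{-1}\nabla'^{-1}(\tau^*_{qtu})^{-1}]_k\,\overline\omega\, C_\alpha(X;t)=(-1)^{|\alpha|}[\tau^*_u\nabla'\tau^*_u]_k C_\alpha(X;t)$. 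This is $\Theta_{e_k}\nabla C_\alpha(X;t)$ by \eqref{remarksign}.

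Your route is more self-contained: it needs neither the nonsymmetric Macdonald basis nor \eqref{weylnsc}, and it makes the two Weyl-symmetrization propositions the same computation. The paper's route instead displays the structural fact that the nonsymmetric input $\nsC_\alpha$ symmetrizes to $C_\alpha(X;t^{-1})$, and Lemma~\ref{lemma783} holds for arbitrary $f\in\P(\ell)$.

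One slip to fix. Writing the left-hand side as $(-1)^{n+k}\vartheta\Pisf_\ell L$ with $L=\vartheta(-t)^{|\alpha|-\ell}[\cdots]_k x^\alpha$ has one $\vartheta$ too many. The inner $\vartheta$ from $\vartheta\Pisf_\ell\vartheta$ cancels the one in \eqref{e ns shuffle unmod 3}, leaving $(-1)^{n+k}\vartheta\Pisf_\ell\bigl((-t)^{|\alpha|-\ell}[\cdots]_k x^\alpha\bigr)$. Since $\Pisf_\ell$ depends on $t$, this difference is not cosmetic. Your later steps, with a single outer $\vartheta$, do use the correct form. Likewise, your remark about turning $\vartheta C_\alpha(X;t^{-1})$ into $C_\alpha(X;t)$ belongs to the paper's route; $C_\alpha(X;t^{-1})$ never appears in yours.
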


Before proving Proposition \ref{restore}, we first establish the following preliminary lemmas.
\begin{lemma}\label{lemma783}
For any homogeneous element $f \in \P(\ell)$ of degree $d$, we have:
\[
    d_-^\ell \PP_\ell \Pisf_\ell^{-1} f = (-1)^d \omega \Weyl_1 \dots \Weyl_\ell f.
\]
\end{lemma}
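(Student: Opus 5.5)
The natural route is Lemma~\ref{useful}, applied to $g := \Pisf_\ell^{-1} f$, followed by the elementary identity \eqref{pi0p0} to convert $\Pisf_0\PP_0^{-1}$ into $(-1)^{\deg}\omega$ on $\sym[X]$.

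First I will check that $\Pisf_\ell$ and $\PP_\ell$ interact well with degree. The map $\Pisf_\ell$ sends $f(x_1,\dots,x_\ell)g[X]$ to $g[X/(1-t)]\,\pol\bigl(f/\prod(1-tx_i/x_j)\bigr)$. Here the plethysm $X\mapsto X/(1-t)$ preserves $X$-degree. Expanding $1/(1-tx_i/x_j)$ gives only degree-zero monomials, and $\pol$ (truncation to the polynomial part) preserves homogeneous degree. So $\Pisf_\ell$ preserves total degree, and since it is invertible, so does $\Pisf_\ell^{-1}$.

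The map $\PP_\ell$ sends $x^\eta g[x_{\ell+1}+\cdots]$ to $y^\eta g[X/(t-1)]$, which preserves degree in the combined grading of $y$ and $X$. The operator $d_-$ also preserves total degree. This is visible from its formula: it substitutes $X-(t-1)y_\ell$, multiplies by $\Omega[-y_\ell^{-1}X]$, and extracts the $y_\ell^0$ coefficient. So $d_-^\ell\PP_\ell\Pisf_\ell^{-1}f$ is homogeneous of degree $d$ in $\sym[X]=V_0$.

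Next, set $g=\Pisf_\ell^{-1}f\in\P(\ell)$. Lemma~\ref{useful} gives
\[
\Weyl_1\cdots\Weyl_\ell f=\Weyl_1\cdots\Weyl_\ell\Pisf_\ell g=\Pisf_0\PP_0^{-1}d_-^\ell\PP_\ell g .
\]
Write $h:=d_-^\ell\PP_\ell\Pisf_\ell^{-1}f\in\sym[X]$, which is homogeneous of degree $d$ by the first step. Then \eqref{pi0p0} gives $\Pisf_0\PP_0^{-1}h=(-1)^d\omega h$. Hence $\Weyl_1\cdots\Weyl_\ell f=(-1)^d\omega h$. Applying the involution $(-1)^d\omega$ to both sides yields $h=(-1)^d\omega\Weyl_1\cdots\Weyl_\ell f$, which is the claim.

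The main obstacle is the degree bookkeeping: making sure every map involved (in particular $\Pisf_\ell^{-1}$ and $d_-$) preserves total degree, so that the sign in \eqref{pi0p0} is exactly $(-1)^d$. If a direct check on $\Pisf_\ell^{-1}$ is awkward, I would instead argue through the bases $\flagh_\eta(X_\ell)h_\lambda[X]$ and $\flagh^\pm_\eta(X_\ell)h_\lambda[(1-t)X]$. The identity $\Pisf_\ell(\flagh^\pm_\eta h_\lambda[(1-t)X])=\flagh_\eta h_\lambda[X]$ matches homogeneous basis elements of degree $|\eta|+|\lambda|$, so $\Pisf_\ell^{-1}$ maps degree-$d$ elements to degree-$d$ elements.
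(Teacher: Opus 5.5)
Your proof is correct, but it takes a different route from the paper.

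The paper checks the identity on the basis $\tE_{\eta|\lambda}$, with $\mu=(\eta;\lambda)_+$. It combines three facts:
\begin{itemize}
\item $\Pisf_\ell^{-1}\tE_{\eta|\lambda}=\stE_{\eta|\lambda}$, from \eqref{pimac};
\item $d_-^\ell\PP_\ell\stE_{\eta|\lambda}=(-1)^{|\mu|}\Htild_\mu$, imported from \cite[Proposition 7.8.3]{flaggedllt} and \cite[Figure 2]{nsshuffle};
\item $\Weyl_1\cdots\Weyl_\ell\tE_{\eta|\lambda}=\omega\Htild_\mu$.
\end{itemize}
It then extends linearly.

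You instead observe that the lemma is a formal rewriting of Lemma~\ref{useful}. You substitute $g=\Pisf_\ell^{-1}f$ and then undo $\Pisf_0\PP_0^{-1}=(-1)^d\omega$ using \eqref{pi0p0}. The only genuine input is that $\Pisf_\ell^{\pm1}$, $\PP_\ell$ and $d_-$ preserve total degree. Your verifications of this are sound, and your injectivity argument for $\Pisf_\ell^{-1}$ needs no finite-dimensionality.

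What your route buys:
\begin{itemize}
\item It needs nothing about Macdonald polynomials.
\item It makes the degree bookkeeping explicit. The paper leaves this implicit when it extends linearly an identity whose sign $(-1)^d$ depends on degree. That step silently uses that each $\tE_{\eta|\lambda}$ is homogeneous of degree $|\eta|+|\lambda|$, so that a degree-$d$ element expands only in basis elements of degree $d$.
\item It shows that, once Lemma~\ref{useful} is available, Lemma~\ref{lemma783} carries no new information.
\end{itemize}

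What the paper's route buys: it identifies both sides concretely as $(-1)^{|\mu|}\Htild_\mu$ on each basis element, which explains structurally why the identity holds. The cost is reliance on results imported from outside the paper.
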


\begin{proof}
By \eqref{pimac}, Blasiak et al.\ \cite[Proposition 7.8.3]{flaggedllt}, 
and the commutative diagram in \cite[Figure 2]{nsshuffle}, 
we obtain the following identity, which holds on the basis elements of $\P(\ell)$:
\[
    d_-^\ell \PP_\ell \Pisf_\ell^{-1} \tE_{\eta|\lambda} = d_-^\ell \PP_\ell \stE_{\eta|\lambda}
    = (-1)^d \Htild_\mu(X;q,t) = (-1)^d \omega \Weyl_1 \dots \Weyl_\ell \tE_{\eta|\lambda}.
\]
Since this identity holds for all basis elements of $\P(\ell)$, it naturally extends linearly to all of $\P(\ell)$, completing the proof.
\end{proof}

We also recall the following identity on $\sym[X]$ from \cite[Eq.~(193)]{nsshuffle}:
\begin{equation}\label{111}
    \vartheta \omega \left(\nabla' \right)^{-1} \omega \vartheta = \nabla'.
\end{equation}

\begin{prop}
We have the following identity on $\sym[X]$:
\begin{equation}\label{222}
    \vartheta \omega \underline{(qt)^i} \underline{h}_i^* \omega \vartheta = \underline{e}_i^*.
\end{equation}
The operator $\underline{(qt)^i}$ here denotes multiplication by $(qt)^i$.
\end{prop}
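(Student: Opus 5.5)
The plan is to reduce \eqref{222} to an identity between two symmetric functions, by noting that both sides are multiplication operators. The operator $\underline{(qt)^i}\,\underline{h}_i^*$ is multiplication by $(qt)^i h_i[X/M]$. The maps $\omega$ and $\vartheta$ are both ring automorphisms of $\sym[X]$: $\omega$ is $\Bbbk$-linear, while $\vartheta$ is antilinear (it inverts $q,t$) but still multiplicative. Both are involutions.

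For any $f\in\sym[X]$ we then get
\[
\vartheta\omega\bigl((qt)^i h_i[X/M]\cdot \omega\vartheta f\bigr)
=\vartheta\omega\bigl((qt)^i h_i[X/M]\bigr)\cdot \vartheta\omega\omega\vartheta f
=\vartheta\omega\bigl((qt)^i h_i[X/M]\bigr)\cdot f .
\]
So it suffices to prove the single identity $\vartheta\omega\bigl((qt)^i h_i[X/M]\bigr)=e_i[X/M]$.

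To prove it, I will work on power sums and then pass to $h_i$ through the generating function $\Omega$. First, $p_k[X/M]=p_k[X]/\bigl((1-q^k)(1-t^k)\bigr)$, and $\omega$ sends this to $(-1)^{k-1}p_k[X]/\bigl((1-q^k)(1-t^k)\bigr)$. Next, $\vartheta$ replaces the denominator by $(1-q^{-k})(1-t^{-k})$, and
\[
\frac{1}{(1-q^{-k})(1-t^{-k})}=\frac{(qt)^k}{(1-q^k)(1-t^k)} .
\]
Hence $\vartheta\omega\, p_k[X/M]=(-1)^{k-1}p_k[qtX/M]$.

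Since $\vartheta\omega$ is a ring homomorphism, it acts on any $g[X/M]$ as the map $p_k\mapsto(-1)^{k-1}p_k[qtX/M]$, that is, as ``$\omega$ followed by the substitution $X\mapsto qtX$''. Applying this to $h_i=\langle z^i\rangle\exp\bigl(\sum_k p_k z^k/k\bigr)$ turns it into $\langle z^i\rangle\exp\bigl(\sum_k(-1)^{k-1}p_k[qtX/M]z^k/k\bigr)=e_i[qtX/M]=(qt)^i e_i[X/M]$. Finally, $\vartheta$ acts on the scalar $(qt)^i$ as $(qt)^{-i}$, so the two powers of $qt$ cancel and we get exactly $e_i[X/M]=e_i^*$.

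The only delicate step is the sign and plethysm bookkeeping: $\omega$ must act on the $X$-variables only, with $q,t$ treated as scalars, so that it does not touch the $q,t$ appearing inside $M$. The antilinearity of $\vartheta$ has to be applied both to the denominators $(1-q^k)(1-t^k)$ and to the prefactor $(qt)^i$. Once the power-sum computation is done carefully, the rest is formal.
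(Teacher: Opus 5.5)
Your proof is correct, and its computational core is the same as the paper's. Both come down to $\omega\bigl(h_i[X/M]\bigr)=e_i[X/M]$ and $\vartheta\bigl(e_i[X/M]\bigr)=e_i[qtX/M]=(qt)^i e_i[X/M]$.

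The difference is in the framing. The paper verifies the identity on the modified Macdonald basis:
\begin{itemize}
\item it uses the duality $\omega\Htild_\mu(X;q,t)=T_\mu\Htild_\mu(X;q^{-1},t^{-1})$ to get $\omega\vartheta\Htild_\mu=T_\mu^{-1}\Htild_\mu$;
\item it multiplies by $(qt)^ih_i[X/M]$;
\item it applies the same duality again, and the factors $T_\mu^{\pm1}$ cancel;
\item it extends from the basis to all of $\sym[X]$.
\end{itemize}
You instead note that $\vartheta\omega$ is a multiplicative involution. Conjugating multiplication by $g$ therefore gives multiplication by $\vartheta\omega(g)$, which reduces everything to one identity between symmetric functions, checked on power sums.

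Your route is more elementary and basis-free. It needs no Macdonald theory, and it shows that the $T_\mu$ factors in the paper's computation are an artifact of the chosen basis. It also proves the general statement $\vartheta\omega\,\underline{g}\,\omega\vartheta=\underline{\vartheta\omega(g)}$ for every $g\in\sym[X]$.

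One small precision: your description of $\vartheta\omega$ on $g[X/M]$ as $p_k\mapsto(-1)^{k-1}p_k[qtX/M]$ holds for $g$ with rational coefficients. That is the case for $h_i$, so the argument is unaffected.

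The paper's basis computation matches the style it needs for the companion identity $\vartheta\omega(\nabla')^{-1}\omega\vartheta=\nabla'$. There, working on the eigenbasis is genuinely necessary, since $\nabla'$ is not a multiplication operator.
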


\begin{proof}
Recall that the modified Macdonald polynomials satisfy the fundamental property:
\[
    \omega \Htild_\mu(X;q,t) = T_\mu \Htild_\mu(X;q^{-1},t^{-1}).
\]
Using this, we compute the action of the operator on the Macdonald basis as follows:
    \begin{align*}
        & \mathrel{\phantom{=}} \vartheta \omega \underline{(qt)^i} \underline{h}_i^* \omega \vartheta \Htild_\mu(X;q,t) \\[1ex]
        &= \vartheta \omega \underline{(qt)^i} \underline{h}_i^* \omega \Htild_\mu(X;q^{-1},t^{-1}) \\[1ex]
        &= \vartheta \omega \underline{(qt)^i} \underline{h}_i^* \left(T_\mu\right)^{-1} \Htild_\mu(X;q,t) \\[1ex]
        &= \vartheta \omega (qt)^i h_i\left[\cfrac{X}{M}\right] \left(T_\mu\right)^{-1} \Htild_\mu(X;q,t) \\[1ex]
        &= \vartheta (qt)^i e_i\left[\cfrac{X}{M}\right] \left(T_\mu\right)^{-1} T_\mu \Htild_\mu(X;q^{-1},t^{-1}) \\[1ex]
        &= \left(\cfrac{1}{qt}\right)^i e_i\left[\cfrac{qtX}{M}\right] \Htild_\mu(X;q,t) = e_i\left[\cfrac{X}{M}\right] \Htild_\mu(X;q,t) = \underline{e}_i^* \Htild_\mu(X;q,t).
    \end{align*}
Since this action maps basis elements to basis elements consistently, the identity follows for all symmetric functions.
\end{proof}

\begin{proof}[Proof of Proposition \ref{restore}]
We first apply the Weyl symmetrization operator $\Weyl_1 \dots \Weyl_\ell$ to the left-hand side of 
\eqref{e ns shuffle unmod 4}. The computation proceeds as follows:
    \begin{align}
        & \mathrel{\phantom{=}} (-1)^{n+k} \Weyl_1 \dots \Weyl_\ell \vartheta \left[\underline{\left(\tau_{qtu,\ell}^*\right)}^{-1} \modnab^{-1} \underline{\left(\tau_{qtu,\ell}^*\right)}^{-1}\right]_k \nsC_\alpha \nonumber \\[1ex]
        &= (-1)^{n+k} \vartheta \Weyl_1 \dots \Weyl_\ell \left[\Pisf_\ell \overline{\left(\tau_{qtu,\ell}^*\right)}^{-1} \umnab^{-1} \overline{\left(\tau_{qtu,\ell}^*\right)}^{-1} \Pisf_\ell^{-1}\right]_k \nsC_\alpha && \text{(by \eqref{eq831}, \eqref{defnpiptau})} \nonumber \\[1ex]
        &= (-1)^{n+k} \vartheta \Weyl_1 \dots \Weyl_\ell \Pisf_\ell \PP_\ell^{-1} \left[\left(\tau_{qtu,\ell}^*\right)^{-1} \Mnab^{-1} \left(\tau_{qtu,\ell}^*\right)^{-1} \PP_\ell \Pisf_\ell^{-1}\right]_k \nsC_\alpha && \text{(by \eqref{eq831}, \eqref{defnptau})} \nonumber \\[1ex]
        &= (-1)^{n+k} \vartheta \Pisf_0 \PP_0^{-1} d_-^\ell \PP_\ell \PP_\ell^{-1} \left[\left(\tau_{qtu,\ell}^*\right)^{-1} \Mnab^{-1} \left(\tau_{qtu,\ell}^*\right)^{-1} \PP_\ell \Pisf_\ell^{-1}\right]_k \nsC_\alpha && \text{(by Lemma \ref{useful})} \nonumber \\[1ex]
        &= (-1)^{n+k} \vartheta (-1)^n \omega d_-^\ell \left[\left(\tau_{qtu,\ell}^*\right)^{-1} \Mnab^{-1} \left(\tau_{qtu,\ell}^*\right)^{-1} \PP_\ell \Pisf_\ell^{-1}\right]_k \nsC_\alpha && \text{(by \eqref{pi0p0})} \nonumber \\[1ex]
        &= (-1)^{n+k} \vartheta (-1)^n \omega \left[\left(\tau_{qtu}^*\right)^{-1} \left(\nabla' \right)^{-1} \left(\tau_{qtu}^*\right)^{-1}\right]_k d_-^\ell \PP_\ell \Pisf_\ell^{-1} \nsC_\alpha && \text{(by \eqref{eqtau-1}, \eqref{mnab-1})} \nonumber \\[1ex]
        &= (-1)^{n+k} \vartheta (-1)^n \omega \left[\left(\tau_{qtu}^*\right)^{-1} \left(\nabla' \right)^{-1} \left(\tau_{qtu}^*\right)^{-1}\right]_k (-1)^{n-k} \omega \Weyl_1 \dots \Weyl_\ell \nsC_\alpha && \text{(by Lemma \ref{lemma783})} \nonumber \\[1ex]
        &= (-1)^{n+k} \vartheta (-1)^n \omega \left[\left(\tau_{qtu}^*\right)^{-1} \left(\nabla' \right)^{-1} \left(\tau_{qtu}^*\right)^{-1}\right]_k (-1)^{n-k} \omega C_\alpha(X;t^{-1}) && \text{(by \eqref{weylnsc})} \nonumber \\[1ex]
        &= (-1)^n \vartheta \omega \left[\left(\tau_{qtu}^*\right)^{-1} \left(\nabla' \right)^{-1} \left(\tau_{qtu}^*\right)^{-1}\right]_k \omega \vartheta C_\alpha(X;t) \nonumber \\[1ex]
        &= \left[(-1)^n \vartheta \omega \left(\tau_{qtu}^*\right)^{-1} \left(\nabla' \right)^{-1} \left(\tau_{qtu}^*\right)^{-1} \omega \vartheta \right]_k C_\alpha(X;t). \nonumber \\[-1ex]
        & \label{e ns shuffle unmod 4 left}
    \end{align}

Next, we extract the coefficient of $(-u)^k$ in the operator 
$(-1)^n \vartheta \omega \left(\tau_{qtu}^*\right)^{-1} \left(\nabla' \right)^{-1} \left(\tau_{qtu}^*\right)^{-1} \omega \vartheta$, 
which corresponds to the notation $[\cdot]_k$. This gives us:
    \begin{align*}
        & \mathrel{\phantom{=}} (-1)^n \vartheta \omega (-qt)^k \sum_{i=0}^k \underline{h}_i^* \left(\nabla' \right)^{-1} \underline{h}_{k-i}^* \omega \vartheta \\
        &= (-1)^{|\alpha|} \sum_{i=0}^k \vartheta \omega \underline{(qt)^i} \underline{h}_{i}^* \left(\nabla' \right)^{-1} \underline{(qt)^{k-i}} \underline{h}_{k-i}^* \omega \vartheta.
    \end{align*}

Combining this with identities \eqref{111} and \eqref{222}, 
we continue the computation of \eqref{e ns shuffle unmod 4 left} to obtain:
\begin{align*}
    & \mathrel{\phantom{=}} (-1)^{|\alpha|} \sum_{i=0}^k \vartheta \omega \underline{(qt)^i} \underline{h}_i^* \left(\nabla' \right)^{-1} \underline{(qt)^{k-i}} \underline{h}_{k-i}^* \omega \vartheta C_\alpha(X;t) \\[1ex]
    &= (-1)^{|\alpha|} \sum_{i=0}^k \vartheta \omega \underline{(qt)^i} \underline{h}_i^* \omega \vartheta 
    \vartheta \omega \left(\nabla' \right)^{-1} \omega \vartheta 
    \vartheta \omega \underline{(qt)^{k-i}} \underline{h}_{k-i}^* \omega \vartheta C_\alpha(X;t) && \text{(since $\omega\vartheta\vartheta\omega = \mathrm{id}$)} \\[1ex]
    &= (-1)^{|\alpha|} \sum_{i=0}^k \underline{e}_i^* \nabla' \underline{e}_{k-i}^* C_\alpha(X;t) && \text{(by \eqref{111}, \eqref{222})} \\[1ex]
    &= \Theta_{e_k} \nabla C_\alpha(X;t). && \text{(by \eqref{remarksign})} \\[1ex]
\end{align*}

Next, we apply the Weyl symmetrization operator to the right-hand side of \eqref{e ns shuffle unmod 4}, which yields:
\begin{align}
    & \mathrel{\phantom{=}} \Weyl_1 \dots \Weyl_\ell \sum_{\pi \in \mathrm{D}(\alpha)^{\ast k}} 
    \sum_{w \in \FWPF_\alpha(\pi)} q^{\area(\pi, dr)} t^{\dinv(\pi,w)} \xx^{\content(w)} \nonumber \\[1ex]
    &= \mathop{\sum_{(\pi, dr)\in \mathrm{D}(\alpha)^{\ast k}}} q^{\area(\pi, dr)} \Weyl_1 \dots \Weyl_\ell \Gcol_\ell(\hat{\pi}', \Sigma_\pi) \nonumber \\[1ex]
    &= \mathop{\sum_{(\pi, dr)\in \mathrm{D}(\alpha)^{\ast k}}} q^{\area(\pi, dr)} d_-^\ell \chi_\ell(\hat{\pi}', \Sigma_\pi) && \text{(by \eqref{weylcolumn})} \nonumber \\[1ex]
    &= d_-^\ell \mathop{\sum_{(\pi, dr)\in \mathrm{D}(\alpha)^{\ast k}}} q^{\area(\pi, dr)} \chi_\ell(\hat{\pi}', \Sigma_\pi) \nonumber \\[1ex]
    &= \mathop{\sum_{P\in \mathrm{LD}(n)^{\ast k}}}_{\dcomp(P)=\alpha} q^{\area(P)} t^{\dinv(P)} \xx^{P}. && \text{(by \cite[Remark 6.21]{theta})} \label{e ns shuffle unmod 4 right}
\end{align}

Equating the fully expanded left-hand side and right-hand side \eqref{e ns shuffle unmod 4 left} and \eqref{e ns shuffle unmod 4 right}, we immediately deduce:
\[
    \Theta_{e_k} \nabla C_{\alpha}(X;t) = \mathop{\sum_{P\in \LD(n)^{\ast k}}}_{\dcomp(P)=\alpha} q^{\area(P)} t^{\dinv(P)} \xx^{P},
\]
which rigorously completes the proof of the proposition.
\end{proof}

\section{Further remarks}
\subsection{Atom positivity conjectures}
Drawing inspiration from the works of Blasiak et al.\ \cite{nsshuffle, flaggedllt}, we propose analogous stable atom positivity conjectures in our framework.

Recall the Demazure operator $\pi_i$ defined in \eqref{dema}. For any vector \(\alpha \in \ZZ^N\), 
we define the corresponding Demazure atom as follows:
\begin{equation}\label{atom}
    \Acal_\alpha := \hat{\pi}_{w} (\xx^{\alpha_+}),
\end{equation}
where \(\hat{\pi}_i := \pi_i - 1\), and $\pi_w$ denotes the shortest permutation such that \(w(\alpha_+) = \alpha\), with $\alpha_+$ being the partition obtained by sorting the parts of $\alpha$ in weakly decreasing order. Here $x^{\alpha_+}$ denotes the monomial $x_1^{(\alpha_+)_1} \dots x_N^{(\alpha_+)_N}$.

The stable atoms are then defined by:
\begin{equation}\label{stableatom}
    \Acal_{\eta| \lambda} = \Acal_{\eta| \lambda}(X) := \Weyl_{\ell+1} \dots \Weyl_{\ell+k} \Acal_{(\eta; \lambda)}(X_{\ell+k}),
\end{equation}
where \((\eta | \lambda) \in \pairsr\), and \(k := \ell(\lambda)\) denotes the length of the partition $\lambda$.
Notably, as $(\eta | \lambda)$ ranges over $\pairsr$, the elements $\Acal_{\eta| \lambda}$ form a basis for the space $\P(\ell)$.

Furthermore, by \cite[Lemma 7.8.2]{flaggedllt}, they satisfy the following structural property:
\[
    \Weyl_1 \dots \Weyl_\ell \Acal_{\eta| \lambda}(X) =
    \begin{cases}
        s_{(\eta; \lambda)}(X) &  \text{if $(\eta; \lambda)$ is weakly decreasing}, \\
        0           &      \text{otherwise}.
    \end{cases}
\]

As a consequence, using \cite[Eq.~(201)]{nsshuffle}, for any element \(f \in \P(\ell)\), we have the implication:
\begin{equation}\label{201}
    \text{$f$ is stable atom positive} \implies \text{$\Weyl_1 \dots \Weyl_\ell f$ is Schur positive}.
\end{equation}
This demonstrates that stable atom positivity serves as a strictly stronger condition than Schur positivity in the symmetric function setting.

\begin{conj}
    We conjecture the following positivity phenomena:
\begin{enumerate}
    \item Flagged row and column LLT polynomials are stable atom positive.
    \item The quantity $\Pisf_\ell \spa \alg$ appearing in \eqref{eq1} is stable atom positive.
    \item The quantity \((-1)^{n+k} \left[\underline{\tau_{qtu,\ell}^*}^{-1} \modnab^{-1} \underline{\tau_{qtu,\ell}^*}^{-1}\right]_k \nsC_\alpha\) 
    appearing in \eqref{eq2} is stable atom positive.
\end{enumerate}
\end{conj}

The first conjecture was originally proposed by Blasiak et al.\ in \cite[Conjecture 8.5.3]{nsshuffle}. 
It is straightforward to verify that the first conjecture logically implies the latter two.

\subsection{Nonsymmetric \texorpdfstring{$\Theta$}{Theta} operator}
Analogous to the definitions of the signed nonsymmetric nabla operator $\umnab$ and 
the standard nonsymmetric nabla operator $\modnab$ given by Blasiak et al.\ \cite{nsshuffle} (see Eqs.\ \eqref{141} and \eqref{142}), 
we introduce the corresponding nonsymmetric $\Theta$ operator.

Following D'Adderio et al.\ \cite[Eq.~(27)]{theta}, we define two operators $\underline{\mathbf{\Pi}}$ and $\overline{\mathbf{\Pi}}$ 
acting on the space $\P(\ell)$ as follows:
\begin{equation}
    \overline{\mathbf{\Pi}} \stE_{\eta| \lambda}(X ;q,t) := \Pi_\mu \stE_{\eta| \lambda}(X ;q,t), \qquad \text{where } \mu = (\eta; \lambda)_+,
\end{equation}
\begin{equation}
    \underline{\mathbf{\Pi}} \tE_{\eta| \lambda}(X ;q,t) := \Pi_\mu \tE_{\eta| \lambda}(X ;q,t), \qquad \text{where } \mu = (\eta; \lambda)_+.
\end{equation}

By the identity \eqref{pimac}, these two operators are related via the following conjugation:
\[
    \Pisf_\ell \overline{\mathbf{\Pi}} \Pisf^{-1}_\ell = \underline{\mathbf{\Pi}}.
\]

Using these two building blocks, we can now define the nonsymmetric $\Theta$ operators $\umthe_f$ and $\modthe_f$, 
for any \(f \in \sym[X]\) and \(F \in \P(\ell)\):
\begin{equation} \label{umthe}
    \umthe_f F  :=  
    \begin{cases}
        \overline{\mathbf{\Pi}} f^* \overline{\mathbf{\Pi}}^{-1} F  & \text{if } n \ge 1,\\
        0  & \text{if } n=0 \text{ and } k \ge 1, \\
        f \cdot F  & \text{if } n=0 \text{ and } k=0.
    \end{cases}
\end{equation}
\begin{equation} \label{modthe}
    \modthe_f F  :=  
    \begin{cases}
        \underline{\mathbf{\Pi}} f^* \underline{\mathbf{\Pi}}^{-1} F  & \text{if } n \ge 1,\\
        0  & \text{if } n=0 \text{ and } k \ge 1, \\
        f \cdot F  & \text{if } n=0 \text{ and } k=0.
    \end{cases}
\end{equation}

In the symmetric setting, D'Adderio and Mellit \cite[Proposition 6.2]{proofdelta} established the following identity, 
which we stated earlier in \eqref{thetanabla}:
\[
    \Theta_{e_k} \nabla' = \sum_{i=0}^k \e_i^* \nabla' \e_{k-i}^*.
\]

\begin{open}
Is there an analogous structural identity in the nonsymmetric setting of $\P(\ell)$, comparable to \eqref{thetanabla}, 
that involves the operators $\umnab$ and $\umthe_f$, or $\modnab$ and $\modthe_f$? 
\end{open}

Such an identity would provide a highly compact and beautiful reformulation for our main results \eqref{eq1} and \eqref{eq2}.


\addresseshere

\end{document}